\title[Hookean incompressible viscoelasticity]{Global existence of\\ small displacement 
solutions for\\  Hookean incompressible viscoelasticity\\
 in 3D}
\author{Boyan Jonov}
\address{ Raytheon Technologies\\ Goleta, CA 93117}
\author
{Paul Kessenich}
\address{ Department of Mathematics\\ University of Michigan\\ Ann Arbor, MI  48109}
\author{
Thomas C.\ Sideris}
\address{Department of Mathematics\\University of California\\Santa Barbara, CA 93106}
\newtheorem{theorem}{Theorem}[section]
\newtheorem{corollary}[theorem]{Corollary}
\newtheorem{lemma}[theorem]{Lemma}
\newtheorem{prop}[theorem]{Proposition}
\theoremstyle{definition}
\theoremstyle{remark}
\theoremstyle{remark}
\numberwithin{equation}{section}
\newcommand{\eps}{\varepsilon}
\newcommand{\jb}[1]{\langle #1 \rangle}
\newcommand{\ltn}[2][L^2]{\|#2\|_{#1}}
\newcommand{\ltns}[2][L^2]{\|#2\|^2_{#1}}
\newcommand{\ltip}[2]{\langle#1,#2\rangle_{L^2}}
\newcommand{\ips}[3]{\langle#1,#2\rangle_{#3}}
\newcommand{\ip}[2]{\langle#1,#2\rangle}
\newcommand{\isum}[2][q]{\sum_{\genfrac{}{}{0pt}{}{|a|+k\le #2}{k\le#1}}}
\newcommand{\stacktwo}[2]{\genfrac{}{}{0pt}{}{ #1}{#2}}
\newcommand{\stackthree}[3]{\stacktwo{\stacktwo{#1}{#2}}{#3}}
\newcommand{\rr}{{\mathbb R}}
\newcommand{\omt}{\widetilde{\Omega}}
\newcommand{\pbar}{\bar{p}}
\newcommand{\qbar}{\bar{q}}
\newcommandtwoopt{\EE}[3][U][t]{{\mathcal E}_{#3}[#1](#2)}
\newcommand{\EEZ}[2][U_0]{\mathcal E_{#2}[#1]}
\newcommandtwoopt{\EEP}[4][U][t]{{\mathcal E}^{#3}_{#4}[#1](#2)}
\newcommandtwoopt{\EEH}[3][U][t]{{\mathcal E}_{#3}^{1/2}[#1](#2)}
\newcommand{\EEZH}[2][U_0]{\mathcal E^{1/2}_{#2}[#1]}
\newcommandtwoopt{\YYI}[3][U][t]{{\mathcal Y}_{#3}[#1](#2)}
\newcommandtwoopt{\ZZI}[3][U][t]{{\mathcal Z}_{#3}[#1](#2)}
\newcommandtwoopt{\YYIH}[3][U][t]{   {\mathcal Y}_{#3}^{1/2}    [#1](#2)  }
\newcommand{\slt}{{\rm SL}(3)}
\newcommand{\td}[1]{\widetilde{#1}}
\newcommand{\hs}[1]{\hspace{#1cm}}
\newcommand{\isp}[1]{\quad\text{#1}\quad}
\begin{document}

\begin{abstract}
The initial value problem for Hookean incompressible viscoelastic motion
in three space dimensions has global strong solutions with small displacements.
\end{abstract}
\maketitle


\section{Introduction}

This article establishes the global existence of strong solutions to the equations of motion
for viscoelastic Hookean incompressible materials in $\rr^3$, with 
small initial displacements.
The equations of motion take the form
%
%
\begin{align}
\label{PDE1}
&\partial_t F + v \cdot \nabla F - \nabla v F = 0\\
\label{PDE2}
& \partial_t v + v \cdot \nabla v + \nabla \pi - \mu\nabla \cdot(FF^T) =  \nu\Delta v\\
\label{PDE3}
&\nabla \cdot v = 0,
\end{align}
%
%
in which $(F,v,\pi):[0,T)\to \slt\times \rr^3\times\rr$ represent the deformation gradient, velocity, and pressure,
expressed in spatial coordinates.
The central point of this article is that
the   restriction \eqref{IDHypoth} placed on the size of the initial data 
$\left.(F-I,v)\right|_{t=0}$ is to be uniform with respect to the 
Reynolds number $\text{Re}=\nu^{-1}>1$.
This system can be viewed as the
Oldroyd-B model at infinite Weissenberg number with conformation tensor $\sigma=FF^\top$
arising in the theory of complex fluids.

Global existence  of small displacement solutions for incompressible isotropic elastic materials ($\nu=0$) was obtained in \cite{Sideris-Thomases-2005}
and \cite{Sideris-Thomases-2007}.  Thus, the  result in this paper establishes the stability  of a
 nonlinear elastodynamic system under a viscous perturbation. 
 An earlier version of the results in this paper appeared in \cite{Kessenich-2009}.
  The restriction to Hookean materials is made here 
only for technical simplification.  
A scalar analog  involving 
a dissipative perturbation of a scalar nonlinear wave equation with 
nearly null structure was presented in \cite{Jonov-Sideris-2015}.

On the other hand, 
the system reduces to the Navier-Stokes equations,
under  the hydrodynamical limit obtained by sending the elastic modulus $\mu\to0$.
In $\rr^3$, the Navier-Stokes equations have a global strong solution for initial velocities which are small relative to $\nu=\text{Re}^{-1}$,
see \cite{Kiselev-Ladyzhenskaya-1957}.
Of course, uniformity in $\nu$ is lost (or at least unknown) for incompressible fluids in 3d, and therefore,
one  can think of the effect of  the tangential component of the elastic stress, which is present only
when $\mu>0$, as a kind elastic of regularization.
Unsurprisingly then, in this paper the smallness restriction on the initial data
 is  sensitive to the size of $\mu>0$, and  this value will be fixed below.
  
There is an extensive literature on the existence of solutions to the equations of viscoelastic fluids,
see for example 
\cite{Cai-etal-2017}, \cite{Chemin-Masmoudi-2001}, \cite{Constantin-etal-2020},
 \cite{Elgindi-Liu-2015}, \cite{Elgindi-Rousset-2015},
\cite{Lei-Liu-Zhou-2007}, \cite{Lei-Liu-Zhou-2008}, \cite{Lin-Liu-Zhang-2005},
\cite{Lions-Masmoudi-2000}.

The proof of the global existence theorem is based on the energy method for symmetric
 systems using rotational and scaling vector fields.
The presence of the elastic stress term implies the local decay of energy inside a forward space-time
 cone of small aperture,
beginning with the analysis of the linearized problem in Proposition \ref{LocEn2Def}.  
The scaling operator is vital in this step.
Use of the rotational vector fields in  three space dimensions yields strong Sobolev
inequalities leading, in turn, to the basic interpolation estimates  described in Section \ref{BasicIneq}.
For small displacements, the decay of local energy and the interpolation inequalities 
can be combined to bootstrap the nonlinear terms to obtain the requisite {\em a priori} bounds for the
energy.  The precise null
structure of the nonlinear terms in the Hookean case is crucial in completing this portion of the argument,
see \eqref{I2'Bound}.

The challenge is to adapt this essentially hyperbolic method to a dissipative system.
The energy   provides control of the dissipative quantity $\nu\int_0^t\ltns{\nabla v(s)}ds$,
however, because the goal is to obtain estimates uniform in $\nu$, this 
 term, paradoxically,  is of little use.  In fact, dissipation  complicates
the local energy estimate by introducing extra terms and
requiring an additional   time integration in comparison to the case $\nu=0$.
The commutation properties of the scaling operator with the linearized operator
is also disrupted by the presence of dissipation, see \eqref{LinCommut},
making it necessary to  carefully monitor the use of the scaling operator.
Consequently, norms 
will take into account the total number of derivatives 
as well as the number of instances of the scaling operator.
The associated notation is explained in the next section, followed by a complete statement of the
main result, Theorem \ref{MainRsltElstic}.  The proof of the theorem will be presented in the remaining  sections.
\section{Notation}
Partial derivatives will be denoted by
\[
\nabla=(\partial_j)_{j=1}^3\isp{and}\partial=(\partial_t,\nabla).
\]
Define the antisymmetric
matrices
\begin{equation}
Z_1 = e_3 \otimes e_2 - e_2 \otimes e_3, \;
Z_2 = e_1 \otimes e_3 - e_3 \otimes e_1, \;
Z_3 = e_2 \otimes e_1 - e_1 \otimes e_2,
\end{equation}
where $\{e_1,e_2,e_3\}$ represents the standard basis in $\rr^3$. 
The rotational vector fields $\omt_i$ are  defined as the Lie derivative with respect to
the vector fields
\[
\Omega=x\wedge\nabla=(\Omega_\ell)_{\ell=1}^3=(\ip{Z_\ell x}{\nabla})_{\ell=1}^3.
\]
Thus we have
\begin{align}
& \omt_\ell \pi = \Omega_\ell \pi, \quad \text{for} \quad \pi :\rr^3\to \rr,\\
& \omt_\ell v = \Omega_\ell v - Z_\ell v, \quad \text{for} \quad v:\rr^3\to \rr^3, \\
& \omt_\ell F = \Omega_\ell F - [Z_\ell, F], \quad \text{for} \quad F :\rr^3\to \rr^{3\times3},
\end{align}
where $[\cdot ,\cdot]$ denotes the commutator of two matrices.
We will  rely heavily on the decomposition
\begin{equation}\label{GradDecomp}
\nabla = \omega \partial_r - \frac{\omega}{r} \wedge \Omega,\quad \omega=\frac{x}{r}.
\end{equation}

We also define the scaling operators
\[
S_0 = \ip{x}{\nabla}=r \partial_r\isp{and}
S = t\partial_t + r \partial_r.
\]
For a more concise notation we shall write
\begin{equation}
\Gamma = \{ \nabla, \omt \}.
\end{equation}
The scaling operators  are not included in $\Gamma$ because
 their occurrence will be tracked separately as is evident in
the following definition of the solution space:
\[
X^{p,q} = \left\{ U = (G,v): \rr^3 \rightarrow \rr^{3\times3} \times \rr^3 \; :\isum{p}\ltn{S_0^k\Gamma^a U(t)}
<\infty\right\}
\]
for integers $0 \leq q \leq p$. This is a Hilbert space with inner product
\begin{equation}
\ips{U_1}{U_2}{X^{p,q}} = \isum{p}\langle S_0^k\Gamma^a U_1,S_0^k\Gamma^a U_2\rangle_{L^2}.
\end{equation}
Thus, $p$ indicates the total number of derivatives taken, while $q$ indicates the number
of occurrences of $S_0$.  Here, $G=F-I$ plays the role of the displacement gradient, and $v$ represents the
velocity.  We do not include the pressure in the solution space because it will be expressed as a function
of $U=(G,v)$, see \eqref{PressureFormula}.

The energy associated with a solution $U = (F-I,v)=(G,v)$ of the PDEs \eqref{PDE1}, \eqref{PDE2} 
 is given by
\begin{equation}
\EE{p,q}=\isum{p}\left[\tfrac12\ltns{S^k\Gamma^a U(t)}+\nu\int_0^t\ltns{\nabla S^k\Gamma^a v(s)}\;ds\right].
\end{equation}
If $U(0) = U_0$, then the energy at time $t=0$ will be denoted as
\begin{equation}
\EEZ{p,q}\equiv
\EE[U][0]{p,q}=\frac12\ltns[X^{p,q}]{U_0}.
\end{equation}

We mention a few additional notational conventions.
We  write  $\jb{t}=(1+t^2)^{1/2}$.  
Summation on repeated indices is understood.  
Although all indices are lowered,  the first will be treated  contravariantly, and the rest will
act covariantly.
For vector-valued functions $v$,
$\nabla v$ is the matrix-valued function with entries $(\nabla v)_{ij}=\partial_jv_i$.
If $F$ is a matrix-valued function, then $\nabla\cdot F$ is the vector-valued function
with components $(\nabla\cdot F)_i=\partial_jF_{ij}$.  
We will use $F_{,j}$ to denote the $j^{\text{th}}$ column of $F$.
We write $A\lesssim B$ to mean that there exists a constant $C>0$ such that 
$A\le CB$.  All constants explicit or implied are independent of $0\le\nu\le1$.

\section{Main results}

Here now is a complete statement of the result to be proven.  
Since we shall consider small displacement gradients, we 
rewrite the system in terms of  $G=F-I$.
We shall return to the original system in Theorem \ref{OriginalPDEs} below.

\begin{theorem} \label{MainRsltElstic}
Fix $\mu=1$.  Choose  $(p,q)$ with $p\ge11$  and  $p\ge q> p^\ast$, where $p^\ast=\left[\frac{p+5}2\right]$.

There are  constants $C_0,\;C_1>1$, $0<\eps\ll1$   with the property  that
if  $v_0\in X^{p,q}$
satisfies
\begin{equation}
\label{IDHypoth}
 \nabla\cdot v_0=0\isp{and}C_0\|v_0\|_{X^{p^\ast,p^\ast}}^2(1+\|v_0\|_{X^{p,q}})<{\eps^2},
\end{equation}
then there is a unique pair $(G,v):[0,\infty)\times\rr^3\to\rr^{3\times 3}\times\rr^3$ such that
\begin{equation}
U=(G,v)\in C([0,\infty);X^{p,q})\cap\bigcap_{k=1}^{q}C^k((0,\infty),X^{p-k,q-k})
\end{equation}
and $(G,v)$ satisfies the PDEs 
%
\begin{align}
\label{Pde1}
& \partial_t G - \nabla v = \nabla v G - v \cdot \nabla G \\
\label{Pde2}
&\partial_t v - \nabla \cdot G - \nu \Delta v = \nabla \cdot (GG^T) - v \cdot \nabla v - \nabla \pi
\end{align}
with pressure
\begin{equation}
\label{PressureFormula}
\pi=\Delta^{-1}\partial_i\partial_j\left[G_{ik}G_{jk}-v_iv_j\right]
\end{equation}
%
%
and initial data $(0,v_0)$.

Moreover, $U=(G,v)$ satisfies the estimates
\begin{equation}
\label{Apriori}
\EE{p,q}\le C_1\EEZ{p,q}\jb{t}^{C_1{\eps}}\isp{and}
\EE{p^\ast,p^\ast}<{\eps^2},
\end{equation}
for $ t\in[0,\infty)$.
\end{theorem}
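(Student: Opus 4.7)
The plan is a standard continuation argument driven by weighted energy estimates. For data as in \eqref{IDHypoth}, local well-posedness in $X^{p,q}$ is furnished by the symmetric-hyperbolic/parabolic structure of \eqref{Pde1}--\eqref{Pde2}; the dissipation $\nu\Delta v$ only helps this step, with $\pi$ recovered from $U$ via \eqref{PressureFormula}. Let $[0,T)$ be the maximal interval on which the bootstrap hypothesis $\EE{p^\ast,p^\ast}<\eps^2$ holds. To force $T=\infty$ it suffices to establish both estimates in \eqref{Apriori} with strictly improved constants.

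For the weighted energy estimate I would apply $S^k\Gamma^a$ with $|a|+k\le p$, $k\le q$, pair with $S^k\Gamma^a U$, and exploit the symmetry of the elastic principal part $(G,v)\mapsto(\nabla v,\nabla\cdot G)$ together with $\nabla\cdot v=0$ so that the pressure term integrates out. The viscous term contributes $\nu\int_0^t\ltns{\nabla S^k\Gamma^av(s)}\,ds$ on the left. Two kinds of errors appear on the right: linear commutator errors---principally the $S$--$\Delta$ commutator alluded to in \eqref{LinCommut}, which carries a factor of $\nu$ and is to be absorbed back into the left-hand dissipation---and nonlinear commutators distributed across the quadratic right-hand sides of \eqref{Pde1}--\eqref{Pde2}. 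For the nonlinear pieces I would place the factor of lower differential order in $L^\infty$ via the rotational Sobolev inequalities of Section \ref{BasicIneq} and bound the higher-order factor in $L^2$ by $\EEH{p,q}$; the low factor is $\EEH{p^\ast,p^\ast}<\eps$ by the bootstrap.

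The decisive input is local energy decay inside a forward cone of small aperture, provided for the linearization by Proposition \ref{LocEn2Def} with the essential help of $S$. Applied to $\nabla\cdot(GG^T)-v\cdot\nabla v$, the Hookean null structure encoded in \eqref{GradDecomp}--\eqref{I2'Bound} converts an a priori nonintegrable factor $\jb{t}^{-1}$ into an integrable one by ensuring that the radial component $\omega\partial_r$ of $\nabla$ lands only on transport-type modes while the good angular component $\tfrac{\omega}{r}\wedge\Omega$ carries the coupling to the characteristic directions of the elastic wave. Outside the cone one instead pays by the local energy decay directly, with at least one scaling operator used up each time; this is why a positive reserve of scalings, $q>p^\ast$, is needed.

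Assembling these ingredients yields a Gronwall-type inequality
\[
\frac{d}{dt}\EE{p,q}(t)\lesssim\frac{\eps}{\jb{t}}\EE{p,q}(t)+(\text{integrable forcing}),
\]
whence the slow polynomial growth $\EE{p,q}\le C_1\EEZ{p,q}\jb{t}^{C_1\eps}$ of \eqref{Apriori}. Running the identical argument at $(p^\ast,p^\ast)$, the top-order norm enters only under a time integral with decay $\jb{s}^{-1-\delta}$ that beats $\jb{s}^{C_1\eps}$ once $\eps$ is small, producing a bound of the form $\EE{p^\ast,p^\ast}(t)\lesssim\EEZ{p^\ast,p^\ast}\bigl(1+\EEZH{p,q}\bigr)$, which the hypothesis \eqref{IDHypoth} makes strictly smaller than $\eps^2$. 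The main obstacle, and what shapes the hypotheses, is reconciling the scaling operator $S$---indispensable for local energy decay---with the viscous term, since $[S,\nu\Delta]\ne0$: this forces the tiered norm $X^{p,q}$ with separate counts of total and scaling derivatives, careful absorption of $S$-errors that depend on $\nu$ yet must be controlled uniformly for $\nu\in(0,1]$, and ultimately the split between the slowly growing top-order estimate and the uniform-in-$t$ low-order bound.
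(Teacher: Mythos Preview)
Your overall architecture matches the paper's: bootstrap on $\EE{p^\ast,p^\ast}<\eps^2$, a Gronwall-type high-energy estimate giving $\EE{p,q}\lesssim\EEZ{p,q}\jb{t}^{C_1\eps}$, and a uniform-in-$t$ low-energy bound of the form $\EE{p^\ast,p^\ast}\lesssim\EEZ{p^\ast,p^\ast}(1+\EEZH{p,q})$ that closes the bootstrap under \eqref{IDHypoth}. The handling of the $[S,\nu\Delta]$ commutator and the split $X^{p,q}$ norm are also correctly identified.

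However, you have the geometry of the two key mechanisms reversed. Local energy decay (the quantity $\YYI{p,q}$, governed by Proposition~\ref{IntLinEstHighOrd} and Theorem~\ref{IntDecayNon}) is an \emph{interior} estimate, supported on the cut-off $\zeta$ where $r\lesssim\sigma\jb{t}$; this is where the scaling operator is indispensable and where the extra reserve $q>p^\ast$ is consumed, since the right-hand side of the local decay estimate involves $\EE{\pbar,\qbar+1}$. The Hookean null structure, by contrast, is exploited in the \emph{exterior} region (support of $\eta$, where $r\gtrsim\jb{t}$), and only in the low-energy estimate: there the nonlinearity is decomposed via \eqref{GradDecomp} so that the radial piece carries a factor $\omega\cdot v$ or $G^\top\omega$, and the constraints \eqref{Constr1}, \eqref{Constr2} together with Proposition~\ref{SpecialSob} upgrade its decay from $\jb{t}^{-1}$ to $\jb{t}^{-3/2}$, yielding the integrable bound \eqref{I2'Bound}. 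Your sentence ``outside the cone one instead pays by the local energy decay directly, with at least one scaling operator used up each time'' is therefore backwards on both counts. Also, the label \texttt{LocEn2Def} you cite is a definition, not the linear local-decay proposition; the relevant results are Lemma~\ref{IntDecayLemma} and Proposition~\ref{IntLinEstHighOrd}.
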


\begin{proof}[Outline of Proof]
The strategy is the same as was used for the scalar model problem studied in \cite{Jonov-Sideris-2015}.
The system \eqref{Pde1},\eqref{Pde2} is locally well-posed in $X^{p,q}$.
The norm in $X^{p,q}$ can be controlled by an expression involving $\EE{p,q}$, and
thus, global existence follows by showing that $\EE{p,q}$ remains finite.
Here, we shall focus only on establishing the {\em a priori} estimates \eqref{Apriori}. 
The proof of these estimates will be spread across the remaining sections.
\end{proof}

\begin{corollary}
\label{ConstrCorollary}
The solution pair $(G,v)$ given in Theorem \ref{MainRsltElstic} satisfies the constraints
%
\begin{align}
\label{Constr1}
\quad&\nabla \cdot v = 0\\
\label{Constr2}
& \nabla \cdot G^T = 0 \\
\label{Constr3}
& \partial_k G_{ij} - \partial_j G_{ik} = G_{\ell j} \partial_\ell G_{ik} - G_{\ell k} \partial_\ell G_{ij}
 \equiv Q_{ijk}(G,\nabla G).
\end{align}
%
%
\end{corollary}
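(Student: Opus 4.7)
The plan is to introduce the three \emph{constraint defects}
\[
D=\nabla\cdot v,\qquad H_j=\partial_iG_{ij},\qquad R_{ijk}=\partial_kG_{ij}-\partial_jG_{ik}-Q_{ijk}(G,\nabla G),
\]
show they satisfy a closed, homogeneous, linear evolution system whose coefficients depend on $U$ and $\nabla U$, and conclude by Gronwall that $(D,H,R)\equiv 0$. Since the initial data $U(0)=(0,v_0)$ with $\nabla\cdot v_0=0$ make all three defects vanish at $t=0$, the vanishing would then propagate to all positive times, which is precisely \eqref{Constr1}--\eqref{Constr3}.

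To derive the evolution equations I would proceed as follows. Applying $\partial_i$ to \eqref{Pde1} and invoking the index-swap identity $(\partial_kv_i)\partial_iG_{kj}=(\partial_iv_k)\partial_kG_{ij}$ to annihilate the quadratic cross terms produces
\[
(\partial_t+v\cdot\nabla)H_j=\partial_jD+G_{kj}\partial_kD.
\]
Taking the divergence of \eqref{Pde2} and substituting $\Delta\pi=\partial_i\partial_j[G_{ik}G_{jk}-v_iv_j]$ from \eqref{PressureFormula} causes the $G$-quadratic contributions to cancel exactly, and the remaining $v$-quadratic pieces simplify to $v\cdot\nabla D+D^2$, giving
\[
(\partial_t-v\cdot\nabla)D-\nu\Delta D=\nabla\cdot H+D^2.
\]
For $R$, I would apply $\partial_k$ and $\partial_j$ to \eqref{Pde1}, antisymmetrize, and reorganize; after index bookkeeping the terms assemble into an expression linear in $(D,H,R)$ and their first spatial derivatives, with coefficients polynomial in $(U,\nabla U)$, of the schematic form
\[
(\partial_t+v\cdot\nabla)R_{ijk}=\mathcal{L}(U,\nabla U)\bigl[D,H,R\bigr].
\]

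The argument would then close by a standard $L^2$ energy estimate on $\tfrac12(\|D\|_{L^2}^2+\|H\|_{L^2}^2+\|R\|_{L^2}^2)$. The dangerous coupling integrals $\int(\nabla\cdot H)D\,dx$ and $\int H\cdot\nabla D\,dx$ cancel by integration by parts; the transport contributions cost only a factor $\|D\|_{L^\infty}$ against the energy; and the remaining lower-order term $\int(G^TH)\cdot\nabla D\,dx$ is absorbed into $\nu\|\nabla D\|_{L^2}^2$ via Young's inequality (with a $\nu$-dependent constant, which is admissible since this corollary claims no $\nu$-uniformity). Gronwall combined with the vanishing initial data then forces $(D,H,R)\equiv 0$.

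The principal obstacle will be the algebraic verification for $R$: one must check that every term generated by differentiating \eqref{Pde1} twice and antisymmetrizing reassembles into $(D,H,R)$ and their first spatial derivatives, with no residual source depending on $U$ alone. This is the Eulerian analogue of the classical fact that the compatibility constraints of a deformation gradient are preserved by the flow equation; it is conceptually routine, but the index bookkeeping is delicate and is the only place where the argument is more than a short calculation.
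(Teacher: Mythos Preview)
Your treatment of \eqref{Constr1} and \eqref{Constr2} is essentially the paper's: derive the closed linear system for $(D,H)=(\nabla\cdot v,\nabla\cdot G^\top)$ and conclude from zero initial data. The paper states this tersely (``by uniqueness''); your Gronwall outline just makes it explicit.

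For \eqref{Constr3} the paper takes a genuinely different route. It first establishes (Theorem~\ref{OriginalPDEs}) that $F=I+G$ coincides, in spatial coordinates, with the deformation gradient $\bar F(t,y)=D_yx(t,y)$ of the flow map generated by $v$. The compatibility $D_{y_j}\bar F_{ik}=D_{y_k}\bar F_{ij}$ is then automatic (equality of mixed partials of $x(t,y)$), and transforming back to spatial coordinates via $D_{y_j}=F_{\ell j}\partial_\ell$ yields \eqref{Constr3} with no Eulerian bookkeeping at all.

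Your direct Eulerian approach also works, and the computation is in fact cleaner than you anticipate: differentiating \eqref{Pde1} and antisymmetrizing one obtains the closed transport equation
\[
(\partial_t+v\cdot\nabla)R_{ijk}=(\partial_m v_i)\,R_{mjk},
\]
with no coupling to $D$ or $H$ whatsoever, so $R\equiv0$ follows immediately by Gronwall (or pointwise along characteristics). Thus the paper's Lagrangian detour is conceptually illuminating but not materially shorter once this identity is in hand. One small caveat on your energy argument for $(D,H)$: absorbing $\int(G^\top H)\cdot\nabla D\,dx$ into $\nu\|\nabla D\|_{L^2}^2$ breaks down at $\nu=0$, which the paper allows; there one should instead integrate by parts and control the resulting terms with the available higher regularity of the solution.
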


\begin{proof}
Let $\delta=\nabla\cdot v$ and $\xi=\nabla\cdot G^\top$.  Then \eqref{Pde1},\eqref{Pde2},\eqref{PressureFormula}
imply that
\begin{align}
&\partial_t\xi-\nabla\delta=G^\top\nabla\delta-v\cdot\nabla\xi\\
&\partial_t\delta-\nabla\cdot\xi=v\cdot\nabla\delta+\delta^2.
\end{align}
Since $(\xi,\delta)$ vanishes initially, \eqref{Constr1},\eqref{Constr2} follow by uniqueness.

We will prove \eqref{Constr3} after the next result.
\end{proof}

We now connect Theorem \ref{MainRsltElstic} to the original problem.

\begin{theorem}
\label{OriginalPDEs}
If $(G,v)$ is the solution pair given in Theorem \ref{MainRsltElstic},
then $F=I+G$ is the velocity gradient of the flow determined by $v$ expressed in spatial coordinates,
$F:[0,\infty)\times \rr^3\to\slt$, 
and $(F,v)$ satisfies \eqref{PDE1},\eqref{PDE2},\eqref{PDE3}.

\end{theorem}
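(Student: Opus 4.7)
The plan is to recover the original system \eqref{PDE1}--\eqref{PDE3} by combining the reformulated equations \eqref{Pde1},\eqref{Pde2} with the constraints established in Corollary \ref{ConstrCorollary}, and then to identify $F=I+G$ with the gradient of the Lagrangian flow of $v$ through a transport-equation uniqueness argument.

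First, I will verify the three PDEs algebraically assuming $F\in\slt$. Equation \eqref{PDE3} is precisely the constraint \eqref{Constr1}. For \eqref{PDE1}, rewriting \eqref{Pde1} as $\partial_t G + v\cdot\nabla G = \nabla v + \nabla v\, G = \nabla v (I+G)$ and using $\partial_t F=\partial_t G$, $\nabla F = \nabla G$ gives $\partial_t F + v\cdot\nabla F - \nabla v\, F=0$. For \eqref{PDE2} (with $\mu=1$), expand
\begin{equation}
FF^\top = (I+G)(I+G^\top) = I + G + G^\top + GG^\top,
\end{equation}
so that
\begin{equation}
\nabla\cdot(FF^\top) = \nabla\cdot G + \nabla\cdot G^\top + \nabla\cdot(GG^\top).
\end{equation}
The middle term vanishes by the constraint \eqref{Constr2}, and substituting into \eqref{Pde2} yields \eqref{PDE2}.

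The remaining (and main) point is to show that $F=I+G$ is the gradient of the flow of $v$ and, in particular, that $F$ takes values in $\slt$. I would introduce the Lagrangian flow $\phi(t,y)$ defined by $\partial_t\phi(t,y)=v(t,\phi(t,y))$, $\phi(0,y)=y$; the regularity of $v$ supplied by $U\in X^{p,q}$ with $p\ge 11$ guarantees $v\in C^1_t\cap C^2_x$ locally, so $\phi$ is a $C^1$-diffeomorphism. Set $\hat F(t,x) = (\nabla_y\phi)(t,\phi^{-1}(t,x))$. A standard computation, differentiating $\partial_t\phi = v\circ\phi$ in $y$ and then pulling back to spatial coordinates, shows that $\hat F$ satisfies exactly \eqref{PDE1} with $\hat F(0,\cdot)=I$. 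On the other hand, we have already checked that $F=I+G$ satisfies the same transport equation with the same initial condition $F(0,\cdot)=I$. Since this equation is an ODE along the characteristics of $v$ for each matrix entry, uniqueness in $C([0,\infty);L^2_{\text{loc}})\cap L^\infty$ forces $F\equiv\hat F$. Therefore $F$ is the deformation gradient of the flow generated by $v$.

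Finally, $\det F = 1$ follows from Liouville's formula: $\partial_t\det\nabla_y\phi = (\nabla\cdot v)\circ\phi\cdot\det\nabla_y\phi$, which together with $\nabla\cdot v=0$ from \eqref{Constr1} and $\det\nabla_y\phi(0,\cdot)=1$ gives $\det\nabla_y\phi\equiv 1$, and hence $\det F\equiv 1$, so $F:[0,\infty)\times\rr^3\to\slt$. The delicate step here is the well-posedness of the flow and the transport equation uniqueness at the regularity provided by Theorem \ref{MainRsltElstic}; the algebraic verification of \eqref{PDE1}--\eqref{PDE3} from \eqref{Pde1},\eqref{Pde2} and the constraints is routine once $F$ has been identified.
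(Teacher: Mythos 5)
Your proposal is correct and follows essentially the same route as the paper: the algebraic recovery of \eqref{PDE1}--\eqref{PDE3} from \eqref{Pde1},\eqref{Pde2} together with the constraints \eqref{Constr1},\eqref{Constr2}, followed by identifying $F$ with the deformation gradient of the flow of $v$ via uniqueness of the variational ODE along characteristics, with $\det F\equiv 1$ coming from incompressibility of the flow. The paper's proof is exactly this argument, phrased in terms of the back-to-labels map.
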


\begin{proof}
$(F,v)$ satisfies \eqref{PDE1},\eqref{PDE2},\eqref{PDE3} by \eqref{Pde1},\eqref{Pde2},\eqref{Constr1},\eqref{Constr2}.

Since $p>3$, we have $X^{p,q}\subset H^3$, and thus,
the velocity vector field $v\in C([0,\infty),X^{p,q})$ is a bounded
$C^1$ function on $\rr^3$ for each $t\in[0,\infty)$, by the Sobolev lemma,
and it is continuous in $t$.  As such it defines a flow on $\rr^3$ through
\[
D_tx(t,y)=v(t,x(t,y)),\quad x(0,y)=y.
\]

By \eqref{Constr1}, $\nabla\cdot v=0$, and so
$x(t,\cdot)$ is a one-parameter family of volume preserving deformations of $\rr^3$.
The  deformation gradient $\bar F_{ij}(t,y)=(D_yx)_{ij}(t,y)=D_{y_j}x_i(t,y)$ is the unique solution of the system
\begin{equation}
\label{vareq}
D_t\bar F(t,y)=\nabla v(t,x(t,y))\bar F(t,y),\quad \bar F(0,y)=I.
\end{equation}
Since $x(t,\cdot)$ is volume preserving, we see that $\bar F:[0,\infty)\times\rr^3\to\slt$.

Since $F$ solves \eqref{PDE1}, $F(t,x(t,y)) $ solves the initial value problem \eqref{vareq}, and so,
$F(t,x(t,y))=\bar F(t,y)$.

The inverse  $y(t,\cdot)$ of the deformation $x(t,\cdot)$ is called the reference or back-to-labels map, and
it takes  spatial points $x$ to material points $y$.  
Using this, we see that $F(t,x)=\bar F(t,y(t,x))$, which shows that $F$ is the deformation gradient in
spatial coordinates.
It follows that  $F:[0,\infty)\times\rr^3\to\slt$.

\end{proof}

\begin{proof}[Proof of \eqref{Constr3}]
Referring to the previous paragraphs, the deformation gradient satisfies
the relation $D_{y_j}\bar F_{ik}(t,y)=D_{y_k}\bar F_{ij}(t,y)$.
Switching to spatial coordinates, this is equivalent to the compatibility condition
$F_{\ell k}\partial_\ell F_{ij}=F_{\ell j}\partial_\ell F_{ik}$.  Since $F=I+G$, this, in turn, implies \eqref{Constr3}.

\end{proof}

\section{Commutation}
\label{CommutationSec}
Using the following notation:
\begin{equation}
U = (G,v), \quad \text{where} \quad G \in \rr^{3\times3}, \; v \in \rr^3,
\end{equation}
we  define
\begin{equation}
A(\nabla) U = (\nabla v, \nabla \cdot G),\quad BU=(0,v), 
\end{equation}
and the linearized ovperator
\begin{equation}
LU=(I\partial_t-A(\nabla)-\nu B\Delta)U.
\end{equation}
We can then rewrite \eqref{Pde1} and  \eqref{Pde2} as
%
\begin{equation} \label{PdeMatrx}
LU= N(U,\nabla U) + (0, - \nabla \pi),
\end{equation}
where the nonlinearity is of the form
\begin{equation}\label{DefN}
N(U,\nabla U) = ( N_1(U,\nabla U), N_2(U,\nabla U) )
\end{equation}
with
\begin{equation}\label{DefN1N2}
\begin{aligned}
& N_1(U,\nabla U) = \nabla v G - v \cdot \nabla G, \\
& N_2(U,\nabla U) = \nabla \cdot (GG^T) - v \cdot \nabla v.
\end{aligned}
\end{equation}

\begin{lemma}
\label{CommutLemma}
Let $p\ge3$ and $p\ge q\ge1$.
Suppose that 
\[
(G,v,\pi )\in C([0,\infty);X^{p,q})\cap\bigcap_{k=1}^{q}C^k((0,\infty),X^{p-k,q-k}).
\]
Then for  any  integer $k$ and multi-index $a$ such that $0\le k\le q$ and $|a|+k\le p$, there holds
\begin{align} \label{LinCommut}
&L S^k \Gamma^a U = (S+1)^k \Gamma^a L U +\nu\sum_{j=0}^{k-1} (-1)^{k-j} {k \choose j}  B \Delta S^j \Gamma^a U\\
\label{NonLinCommut}
&(S+1)^k \Gamma^a N(U,\nabla U)\\
&\ \hspace{.5in} =\sum_{\stacktwo{a_1+a_2=a}{k_1+k_2=k}}
\frac{a!}{a_1! \; a_2!} \frac{k!}{k_1! \; k_2!} N(S^{k_1} \Gamma^{a_1} U, \nabla S^{k_2} \Gamma^{a_2} U)\\
\label{PressureCommut}
&(S+1)^k \Gamma^a \nabla \pi=\nabla S^k\Gamma^a\pi.
\end{align}

\end{lemma}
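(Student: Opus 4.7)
The proof rests on a handful of elementary commutators that follow directly from the definitions in Section \ref{CommutationSec}:
(a) $[S,\partial_t]=-\partial_t$ and $[S,\partial_i]=-\partial_i$, hence $[S,\nabla]=-\nabla$, $[S,\Delta]=-2\Delta$, and $[S,A(\nabla)]=-A(\nabla)$;
(b) $[S,\omt_\ell]=0$, since the constant matrix terms in $\omt_\ell$ commute with $S$ and the vector field $\Omega_\ell=\ip{Z_\ell x}{\nabla}$ is degree zero in $x$;
(c) $[\omt_\ell,L]=0$, verified by writing out $\omt_\ell$ on scalars, vectors, and matrices componentwise and using the antisymmetry of $Z_\ell$ to match $\nabla\omt_\ell v$ with $\omt_\ell\nabla v$ and $\nabla\cdot\omt_\ell G$ with $\omt_\ell\nabla\cdot G$;
(d) the standard Leibniz rule for $S$, $\nabla$, and $\omt_\ell$ acting on products of tensor fields. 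The projection matrix $B$ commutes with every scalar differential operator appearing in $L$.

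For \eqref{LinCommut}, combine (a) with $L=\partial_t-A(\nabla)-\nu B\Delta$ to obtain $[S,L]=-L+\nu B\Delta$, i.e.\
\[
LS=(S+1)L-\nu B\Delta.
\]
I induct on $k$: multiplying the hypothesis $LS^k=(S+1)^k L+R_k$ on the right by $S$, inserting the relation above, and using the shift $(S+1)^k B\Delta=B\Delta(S-1)^k$ obtained by iterating (a), I expand $(S-1)^k$ by the binomial theorem and apply Pascal's identity $\binom{k}{i}+\binom{k}{i-1}=\binom{k+1}{i}$ to convert the viscous remainder into the required form at step $k+1$. Finally, (c) yields $L\Gamma^a=\Gamma^a L$, so applying the operator identity for $LS^k$ to $\Gamma^a U$ produces \eqref{LinCommut}.

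For \eqref{NonLinCommut}, I first distribute $\Gamma^a$ over the bilinear expressions in $N(U,\nabla U)$. From (c) at zeroth order and (d) one has $\Gamma\nabla=\nabla\Gamma$, and $\Gamma$ is a derivation on each term of $N$; iterating Leibniz gives
\[
\Gamma^a N(U,\nabla U)=\sum_{a_1+a_2=a}\frac{a!}{a_1!\,a_2!}\,N(\Gamma^{a_1}U,\nabla\Gamma^{a_2}U).
\]
Next I establish the key identity $(S+1)N(V,\nabla W)=N(SV,\nabla W)+N(V,\nabla SW)$: applying $S$ as a derivation to each product in $N$ produces $N(SV,\nabla W)+N(V,S\nabla W)$, and the substitution $S\nabla W=\nabla SW-\nabla W$ from (a) combines with the $+1$ in $(S+1)$ to cancel the stray $-\nabla W$ term. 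Iterating $k$ times and combining with the preceding display yields \eqref{NonLinCommut}. Finally, \eqref{PressureCommut} follows from $\Gamma^a\nabla=\nabla\Gamma^a$ together with the iterated identity $(S+1)^k\nabla=\nabla S^k$, which is immediate from $(S+1)\nabla=\nabla S$ via (a).

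The only substantive complication is the non-vanishing commutator $[S,\Delta]=-2\Delta$ produced by the viscous term: tracking the shift $(S+1)^k B\Delta=B\Delta(S-1)^k$ through the induction, and pinning down the alternating binomial coefficients $(-1)^{k-j}\binom{k}{j}$ in \eqref{LinCommut}, is the only piece of nontrivial bookkeeping. The other two identities reduce to routine Leibniz-rule expansions.
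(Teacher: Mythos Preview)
Your argument is correct. For \eqref{NonLinCommut} and \eqref{PressureCommut} you do exactly what the paper does: record the one-step Leibniz identities $\Gamma N_j(U,\nabla U)=N_j(\Gamma U,\nabla U)+N_j(U,\nabla\Gamma U)$ and $(S+1)N_j(U,\nabla U)=N_j(SU,\nabla U)+N_j(U,\nabla SU)$ and iterate, and use $(S+1)\nabla=\nabla S$ for the pressure.

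For \eqref{LinCommut} your route differs slightly from the paper's. You induct on $k$, starting from $LS=(S+1)L-\nu B\Delta$, then push the viscous error through $(S+1)^k$ via $(S+1)^kB\Delta=B\Delta(S-1)^k$, expand by the binomial theorem, and close the induction with Pascal's identity. The paper instead avoids induction altogether: from the iterated relations $\partial S^k=(S+1)^k\partial$ and $\Delta S^k=(S+2)^k\Delta$ it writes directly
\[
LS^k=(S+1)^k(\partial_t-A(\nabla))-\nu B(S+2)^k\Delta=(S+1)^kL+\nu B\bigl[(S+1)^k-(S+2)^k\bigr]\Delta,
\]
and then expands $(S+1)^k\Delta=[(S+2)-1]^k\Delta=\sum_{j=0}^k\binom{k}{j}(-1)^{k-j}\Delta S^j$, so the $j=k$ term cancels $(S+2)^k\Delta$ and the remaining sum is exactly the viscous correction in \eqref{LinCommut}. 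The paper's version is a bit cleaner (one binomial expansion, no Pascal bookkeeping), while yours makes the recursive structure of the error explicit; both are entirely valid and equally short.
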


We remark that when $k=0$ the sum is empty in \eqref{LinCommut}.
\begin{proof}
The linear operator $L$ defined in \eqref{PdeMatrx} is translationally and rotationally invariant
which implies that
\begin{equation}
L \Gamma^a U = \Gamma^a L U
\end{equation}
The scaling operator $S$, however, does not commute with $L$.
Since
\begin{equation}\label{scalarCommut}
\begin{aligned}
&\partial S  = (S+1) \partial  \\
&\Delta S  = (S+2) \Delta ,
\end{aligned}
\end{equation}
we have for any $k>0$
\begin{align}
L S^k & = (S+1)^k (\partial_t-A(\nabla))-\nu B(S+2)^k\Delta\\
&=(S+1)^k L+\nu B[(S+1)^k-(S+2)^k]\Delta.
\end{align}
We  complete the proof of \eqref{LinCommut} with
\begin{align*}
(S+1)^k\Delta  &= [(S+2)-1]^k\Delta \\
&=\sum_{j=0}^k{k\choose j}(S+2)^j(-1)^{k-j}\Delta \\
&=(S+2)^k\Delta +\sum_{j=0}^{k-1}(-1)^{k-j}{k\choose j}\Delta S^j .
\end{align*}

The statement \eqref{NonLinCommut} for the nonlinear terms is also  a consequence of 
the Leibnitz-type formulas
\[
\Gamma N_j(U,\nabla U)=N_j(\Gamma U,\nabla U)+N_j(U,\nabla\Gamma  U),
\]
\[
(S+1)N_j(U,\nabla U)=N_j(SU,\nabla U)+N_j(U,\nabla SU),\quad j=1,2.
\]

The statement \eqref{PressureCommut} is also easily verified.
\end{proof}

The appearance of the additional summand on the right-hand side of \eqref{LinCommut} will
force us to proceed by induction on $k$.

\begin{lemma}
Let $p\ge3$ and $p\ge q\ge1$.
Suppose that 
\[
(G,v)\in C([0,\infty);X^{p,q})\cap\bigcap_{k=1}^{q}C^k((0,\infty),X^{p-k,q-k})
\]
satisfy the constraints 
\eqref{Constr1},\eqref{Constr2},\eqref{Constr3}.  
Then for  any  integer $m$ and  multi-index $a$ such that $0\le m\le q$ and $|a|+m\le p$, there holds

%
\begin{align}
\label{ConstrVectrFld1}
\quad&\nabla \cdot S^m \Gamma^a v = 0\\
\label{ConstrVectrFld2}
& \nabla \cdot (S^m \Gamma^aG)^T = 0 \\
\label{ConstrVectrFld3}
& \partial_k (S^m \Gamma^a G)_{ij} - \partial_j (S^m \Gamma^a G)_{ik} \\
& \hs{1} = \sum_{\stacktwo{a_1+a_2=a}{m_1+m_2=m}}
\frac{a!}{a_1! \; a_2!} \frac{m!}{m_1! \; m_2!} Q_{ijk}(S^{m_1} \Gamma^{a_1} G, \nabla S^{m_2} \Gamma^{a_2} G) \\
& \hs{1} \equiv \td{Q}_{ijk} (G, \nabla G).
\end{align}

\end{lemma}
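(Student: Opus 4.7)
The plan is to establish \eqref{ConstrVectrFld1}, \eqref{ConstrVectrFld2}, and \eqref{ConstrVectrFld3} by induction on $m + |a|$; the base case $m = |a| = 0$ is the hypothesized \eqref{Constr1}, \eqref{Constr2}, \eqref{Constr3}. The inductive engine is a set of single-step commutation identities between the divergence and curl-type operators on one side and the operators in $\Gamma = \{\nabla, \omt\}$ and the scaling $S$ on the other.

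For the linear constraints \eqref{ConstrVectrFld1} and \eqref{ConstrVectrFld2}, I would first verify the one-step identities
\[
\nabla\cdot\omt_\ell v = \Omega_\ell(\nabla\cdot v),\qquad \nabla\cdot(Sv) = (S+1)(\nabla\cdot v),
\]
together with the parallel identities $\nabla\cdot(\omt_\ell G)^\top = \omt_\ell(\nabla\cdot G^\top)$ and $\nabla\cdot(SG)^\top = (S+1)(\nabla\cdot G^\top)$. The rotational identities follow from $[\partial_i, \Omega_\ell] = -(Z_\ell)_{ip}\partial_p$ after the two $Z_\ell$ contractions on the differentiated and undifferentiated indices cancel by the antisymmetry of $Z_\ell$; the scaling identities are immediate from $\partial_i S = (S+1)\partial_i$. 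Together with the trivial fact that $\nabla\cdot$ commutes with $\nabla$, iterating these identities propagates the vanishing of $\nabla\cdot v$ and $\nabla\cdot G^\top$ to every $S^m\Gamma^a$.

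For \eqref{ConstrVectrFld3}, the plan is to regard \eqref{Constr3} as an identity of the schematic form $\mathcal{D}G = Q(G, \nabla G)$, where $\mathcal{D}G_{ijk} = \partial_k G_{ij} - \partial_j G_{ik}$ is a linear curl-type operator and $Q$ is bilinear of the same structural type as the nonlinearity $N$ in \eqref{DefN}. I would then mimic the derivation of \eqref{NonLinCommut} in Lemma \ref{CommutLemma}. A direct calculation using $[\partial_k, \omt_\ell] = -(Z_\ell)_{kp}\partial_p$ together with the matrix-commutator piece of $\omt_\ell G$ shows (after the usual $Z_\ell$-antisymmetry cancellation) that $\mathcal{D}$ effectively sees $\omt_\ell$ as a Lie derivative; the Leibniz rule for $Q$ then produces
\[
\mathcal{D}(\omt_\ell G) = Q(\omt_\ell G, \nabla G) + Q(G, \nabla\omt_\ell G).
\]
For the scaling operator, $S\partial = \partial S - \partial$ gives $\mathcal{D}(SG) - \mathcal{D}G$ on the left and $Q(SG, \nabla G) + Q(G, \nabla SG) - Q(G, \nabla G)$ on the right, so that the two superfluous terms cancel via \eqref{Constr3} itself, yielding the Leibniz identity at order $(1, 0)$.

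The general case follows by induction in the same style as the proof of \eqref{NonLinCommut}: applying either $S$ or $\omt_\ell$ to the identity \eqref{ConstrVectrFld3} at order $(m, a)$ and invoking the two single-step formulas above, with $G$ replaced by $S^{m_i}\Gamma^{a_i}G$, produces exactly the multinomial Leibniz sum at the next order, with coefficients $\frac{a!}{a_1!\,a_2!}\frac{m!}{m_1!\,m_2!}$ assembled by Pascal's rule. The main difficulty is the bookkeeping caused by the non-commutation of $S$ with $\partial$: each application of $S$ yields, in addition to the desired Leibniz sum, a stray term that must be absorbed using the induction hypothesis at the current order, precisely as the extra $B\Delta$ summands force $(S+1)^k$ into \eqref{LinCommut} and \eqref{NonLinCommut}. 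No further structural obstacle arises.
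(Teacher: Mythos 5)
Your proposal is correct and follows essentially the same route as the paper, which simply applies $(S+1)^m\Gamma^a$ to the constraints and commutes with $\nabla$ using $\partial S=(S+1)\partial$, the Lie-derivative convention for $\omt_\ell$ (and \eqref{QConv} for $Q$), and the Leibniz expansion as in \eqref{NonLinCommut}. Your induction with single-step commutation identities is just the unpacked version of that one-line argument, including the correct handling of the stray term produced by $S\partial=\partial S-\partial$.
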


\begin{proof}
These statements follow by applying $(S+1)^m\Gamma^a$ to the equations \eqref{Constr1},\eqref{Constr2},\eqref{Constr3}
and commuting with $\nabla$.
Our convention for computing $\omt_\ell Q$ is
\begin{equation}
\label{QConv}
(\omt_\ell Q)_{ijk}=\Omega Q_{ijk}-(Z_\ell)_{iI}Q_{Ijk}+Q_{iJk}(Z_\ell)_{Jj}+Q_{ijK}(Z_\ell)_{Kk}.
\end{equation}
\end{proof}

\begin{lemma}\label{PressLemma}
Let $p\ge3$ and $p\ge q\ge1$.
Suppose that 
\[
(G,v)\in C([0,\infty);X^{p,q})\cap\bigcap_{k=1}^{q}C^k((0,\infty),X^{p-k,q-k}).
\]
If $\pi$ is defined by \eqref{PressureFormula}, then
for  any  integer $k$ and  multi-index $a$ such that $0\le k\le q$ and $|a|+k\le p-1$,
there holds
\begin{equation}
   S^k \Gamma^a \pi=
 \Delta^{-1}\partial_i\partial_jS^k \Gamma^a \left[(G G^T)_{ij} - v_iv_j \right].
\end{equation}
\end{lemma}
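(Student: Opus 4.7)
The plan is to verify that each of the operators $\partial_k$, $\omt_\ell$, and $S$ commutes with the zero-order Fourier multiplier $\Delta^{-1}\partial_i\partial_j$ (acting on matrix-valued arguments with the paper's conventions), and then to pass the operators in $S^k\Gamma^a$ one at a time across this multiplier, starting from the defining formula \eqref{PressureFormula} for $\pi$.

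Spatial derivatives are immediate: $\partial_k$ is constant-coefficient and commutes with $\Delta^{-1}\partial_i\partial_j$. For the scaling operator $S$, the scalar commutation identities \eqref{scalarCommut} give $\partial_i\partial_j S = (S+2)\partial_i\partial_j$ and $\Delta S = (S+2)\Delta$, whence $S\Delta^{-1} = \Delta^{-1}(S+2)$. Combining these,
\[
\Delta^{-1}\partial_i\partial_j S = \Delta^{-1}(S+2)\partial_i\partial_j = S\Delta^{-1}\partial_i\partial_j,
\]
so $S$ commutes with $\Delta^{-1}\partial_i\partial_j$.

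The crux is the rotational commutation. Since $\pi$ is scalar, $\omt_\ell\pi = \Omega_\ell\pi$, and $\Omega_\ell$ commutes with the rotationally invariant operator $\Delta^{-1}$. It therefore suffices to prove
\[
\Omega_\ell\bigl[\partial_i\partial_j H_{ij}\bigr] = \partial_i\partial_j\bigl[\omt_\ell H\bigr]_{ij}
\]
for any matrix-valued $H$, where $\omt_\ell H = \Omega_\ell H - [Z_\ell, H]$. Using the elementary identity $[\Omega_\ell,\partial_k] = (Z_\ell)_{km}\partial_m$, the left side expands to
\[
\partial_i\partial_j\Omega_\ell H_{ij} + (Z_\ell)_{im}\partial_m\partial_j H_{ij} + (Z_\ell)_{jn}\partial_i\partial_n H_{ij},
\]
while expanding the right side using $[Z_\ell,H]_{ij} = (Z_\ell)_{iI}H_{Ij} - H_{iI}(Z_\ell)_{Ij}$ and relabelling dummy indices with the antisymmetry of $Z_\ell$ produces exactly the same three terms. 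To apply this with $H = GG^T - v\otimes v$, one verifies directly from the definitions (again using $Z_\ell^T = -Z_\ell$, which causes the spurious cross terms $G Z_\ell G^T$ to cancel) the Leibniz identities
\[
\omt_\ell(GG^T) = (\omt_\ell G)G^T + G(\omt_\ell G)^T, \qquad \omt_\ell(v_iv_j) = (\omt_\ell v)_i v_j + v_i(\omt_\ell v)_j,
\]
so that the matrix-level action of $\omt_\ell$ on $H$ agrees with the componentwise Leibniz expansion obtained from the vector and matrix conventions in the paper.

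With these three commutations established, the lemma follows by passing the factors of $S^k\Gamma^a$ through $\Delta^{-1}\partial_i\partial_j$ one at a time; the restriction $|a|+k\le p-1$ guarantees that every intermediate derivative of $H$, which is quadratic in $U\in X^{p,q}$, lies in $L^2$, so each step is well-defined. The only nontrivial point is the rotational commutation, which reduces to the index computation sketched above; everything else is formal bookkeeping.
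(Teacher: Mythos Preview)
Your proof is correct and follows essentially the same approach as the paper: both establish the commutation of $S$ and $\omt_\ell$ with the pressure operator and iterate. The paper works at the level of $\Delta$ (showing $\Delta S^k\Gamma^a\pi=\partial_i\partial_jS^k\Gamma^a H_{ij}$ and inverting at the end) and simply declares the rotational step ``straight-forward,'' whereas you work directly with $\Delta^{-1}\partial_i\partial_j$ and supply the index computation; the content is the same.
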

\begin{proof}

Write
\[
\Delta\pi=\partial_i\partial_j\left[(G G^T)_{ij} - v_iv_j \right].
\]
It is straight-forward to check that
\[
\Delta S\pi=(S+2)\Delta\pi=\partial_i\partial_jS\left[(G G^T)_{ij} - v_iv_j \right]
\]
and
\[
\Delta\omt_k\pi=\omt_k\Delta\pi=\partial_i\partial_j\omt_k\left[(G G^T)_{ij} - v_iv_j \right].
\]
It follows that 
\[
\Delta S^k\Gamma^a\pi=\partial_i\partial_jS^k\Gamma^a\left[(G G^T)_{ij} - v_iv_j \right].
\]

\end{proof}

\section{Basic Inequalities}
\label{BasicIneq}

The   results in this section will be used to estimate the nonlinear terms.
This is the only  place where the dimension $n=3$ enters the argument.

We introduce cut-off functions to distinguish two time-space regions,
referred to as interior and exterior.
Define
%

\begin{equation} \label{CutOffs}
\zeta(t,x)=\psi\left(\frac{|x|}{\sigma\jb{t}}\right)
\quad\text{and}\quad
\eta(t,x)=1-\psi\left(\frac{2|x|}{\sigma\jb{t}}\right),
\end{equation}
where
\begin{equation}\label{PsiCutOff}
\psi\in C^\infty(\rr),\quad
\psi(s)=
\begin{cases}
1,& s\le1/2\\
0,&s\ge1
\end{cases},
\quad \psi'\le0,\isp{and}\jb{t}=(1+t^2)^{1/2}.
\end{equation}
The parameter $\sigma>0$ in the definition of $\zeta$ and $\eta$ will be  chosen later to be sufficiently small.
Note that this is not a partition of unity.
The cut-off functions satisfy the following inequalities:
\begin{equation}\label{Unity}
1\le \zeta+\eta
\quad\text{and}\quad
1-\eta\le\zeta^2
\end{equation}
and
\begin{equation}
\label{WeightDer}
\jb{r+t}\Big[|\partial_{t,x}\zeta (t,x)|+|\partial_{t,x}\eta(t,x)|\Big]\lesssim 1.
\end{equation}


We define the following localized energy:
\begin{equation}
\label{LocEnDef}
\YYI{p,q}=\isum{p-1} \ltns{\zeta\nabla S^k\Gamma^a U(t)},\quad q<p.
\end{equation}
This quantity will  be estimated in Section \ref{DecayEstimates}.

The next three Lemmas were proven in \cite{Jonov-Sideris-2015},
(with the slight notational difference that the quantity $\mathcal Y_{p,q}$ is denoted by $\mathcal Y_{p,q}^{\text{int}}$.)
\begin{lemma}[Proposition 6.2, \cite{Jonov-Sideris-2015}]
Suppose that $U=(G,v):[0,T)\times\rr^3\to \rr^{3\times3} \times \rr^3$ satisfies
\eqref{Constr1},\eqref{Constr2} and
\begin{equation}
\YYI{3,0}+\EE{2,0}<\infty.
\end{equation}
Then using the weights \eqref{CutOffs}, we have
\begin{align}
\label{SobInt1}
\quad
&\ltn[L^\infty]{\;\zeta\; U(t)\;}
\ \lesssim \ \YYIH{2,0} + \jb{t}^{-1} \EEH{1,0}\\
\label{SobInt2}
&\ltn[L^\infty]{\;r\zeta\;\nabla U(t)\;}
\ \lesssim \ \YYIH{3,0}+ \jb{t}^{-1} \EEH{2,0}\\
\label{SobHrdy}
&\ltn{\;r^{-1}\zeta \; U(t)\;}
\ \lesssim \
\YYIH{1,0}+\jb{t}^{-1}\EEH{0,0}.
\end{align}

\end{lemma}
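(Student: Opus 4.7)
My proof plan is to apply standard three-dimensional Sobolev- and Hardy-type inequalities to the localized functions $\zeta U$ or $\zeta \nabla U$, and then split the outcome into a ``main'' contribution in which all derivatives fall on $U$ under the cutoff (absorbed into $\mathcal{Y}$) and a ``cutoff'' contribution in which a derivative falls on $\zeta$ and is controlled by $\jb{t}^{-1}$ via \eqref{WeightDer}. Two structural features enable this: since $\zeta=\psi(|x|/\sigma\jb{t})$ depends only on $|x|$ and $t$, it commutes with every rotation $\Omega_\ell$ and with $\omt_\ell$ acting on a scalar; and for each $t$, $\zeta U(t,\cdot)$ has compact spatial support, so both the classical 3D Hardy inequality and the fundamental theorem of calculus from infinity are available.

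For \eqref{SobHrdy} the plan is immediate: apply $\|r^{-1}g\|_{L^2(\rr^3)} \lesssim \|\nabla g\|_{L^2(\rr^3)}$ to $g=\zeta U$ and use Leibniz together with $|\nabla\zeta|\lesssim \jb{t}^{-1}$ to split $\nabla(\zeta U) = \zeta\nabla U + (\nabla\zeta)U$; the two pieces correspond exactly to $\mathcal{Y}^{1/2}_{1,0}$ and $\jb{t}^{-1} \mathcal{E}^{1/2}_{0,0}$.

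For \eqref{SobInt1} and \eqref{SobInt2}, I would first establish a Klainerman-type weighted Sobolev inequality on $\rr^3$ by combining Sobolev embedding on $S^2$, $\|h(r,\cdot)\|_{L^\infty(S^2)} \lesssim \sum_{|a|\le 2}\|\Omega^a h(r,\cdot)\|_{L^2(S^2)}$, with a radial identity from the fundamental theorem applied to $\|g(r,\cdot)\|_{L^2(S^2)}^2 = -2\int_r^\infty\int_{S^2} g\,\partial_s g\, d\omega\, ds$. Using $r^2 \le s^2$ for $s\ge r$ gives
\[
r^2 \|g(r,\cdot)\|_{L^2(S^2)}^2 \le 2\|g\|_{L^2(\rr^3)}\|\partial_r g\|_{L^2(\rr^3)},
\]
which combines with the sphere Sobolev embedding to yield $r\|g\|_{L^\infty(\rr^3)} \lesssim \sum_{|a|\le 2}\|\Omega^a g\|_{L^2}^{1/2}\|\Omega^a \partial_r g\|_{L^2}^{1/2}$. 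For \eqref{SobInt2}, apply this to $g=\zeta\nabla U$: because $\Omega$ commutes with $\zeta$, the $\Omega^a$ can be pushed onto $\nabla U$ and, via the commutator $[\Omega,\nabla]$, rewritten as combinations of $\zeta\nabla\Gamma^{\le 2} U$ (in $\mathcal{Y}^{1/2}_{3,0}$) plus $(\partial_r\zeta)$-terms of size $\jb{t}^{-1}\mathcal{E}^{1/2}_{2,0}$. For \eqref{SobInt1}, I would apply the refined form $(1+r)\|g\|_{L^\infty}\lesssim\cdots$ to $g=\zeta U$; this refinement is proved by patching the radial argument above on $r\gtrsim 1$ with the flat 3D Sobolev embedding $H^2\hookrightarrow L^\infty$ on $r\lesssim 1$ via a partition of unity, after which the same Leibniz bookkeeping delivers the two stated pieces.

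I expect the main obstacle to be the near-origin behavior in \eqref{SobInt1}: without an $r$ weight on the left, the purely radial argument is singular as $r \to 0$, so one must stitch in a flat Sobolev estimate near the origin and verify that the resulting terms still fit into the combination $\mathcal{Y}^{1/2}_{2,0} + \jb{t}^{-1}\mathcal{E}^{1/2}_{1,0}$. The remaining work is routine commutator bookkeeping, using $\Omega\zeta = 0$ and $|\nabla^k \zeta|\lesssim \jb{t}^{-k}$ from \eqref{WeightDer}.
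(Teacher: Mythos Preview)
The paper does not supply its own proof of this lemma; it is imported wholesale as Proposition~6.2 of \cite{Jonov-Sideris-2015}. So I evaluate your plan on its own merits.

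Your argument for \eqref{SobHrdy} is correct and complete.

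Your plan for \eqref{SobInt1} and \eqref{SobInt2} has a derivative-counting gap. The pointwise inequality you derive,
\[
r\,|g(x)|\ \lesssim\ \sum_{|a|\le 2}\|\Omega^a g\|_{L^2}^{1/2}\,\|\partial_r\Omega^a g\|_{L^2}^{1/2},
\]
places up to \emph{two} angular derivatives on the $\partial_r$ factor. Applied to $g=\zeta\nabla U$ for \eqref{SobInt2}, the term $\|\zeta\,\partial_r\Omega^a\nabla U\|_{L^2}$ with $|a|=2$ involves $\zeta\nabla\Gamma^{b}U$ with $|b|$ up to $3$, hence lands in $\mathcal{Y}^{1/2}_{4,0}$ rather than the target $\mathcal{Y}^{1/2}_{3,0}$; the cutoff contribution likewise produces $\jb{t}^{-1}\mathcal{E}^{1/2}_{3,0}$ instead of $\jb{t}^{-1}\mathcal{E}^{1/2}_{2,0}$. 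The repair is to use the sharper Klainerman--Sideris form (Lemma~3.3 of \cite{Sideris-2000}, which this paper invokes in the proof of Proposition~\ref{SpecialSob}) where the $\partial_r$ factor carries only $|a|\le 1$:
\[
\ltn[L^\infty]{r\,g}\ \lesssim\ \Bigl(\sum_{|a|\le 1}\|\partial_r\omt^a g\|_{L^2}\ \sum_{|b|\le 2}\|\omt^b g\|_{L^2}\Bigr)^{1/2}.
\]
With this, the bookkeeping you describe yields \eqref{SobInt2} with the correct indices.

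For \eqref{SobInt1} even the sharper radial inequality applied to $g=\zeta U$ leaves an undecayed factor $\mathcal{E}^{1/2}_{2,0}$ from the $\sum_{|b|\le 2}\|\zeta\,\omt^b U\|$ term, so the patching with flat $H^2\hookrightarrow L^\infty$ that you propose does not close. A cleaner route is the homogeneous Gagliardo--Nirenberg inequality
\[
\|g\|_{L^\infty(\rr^3)}\ \lesssim\ \|\nabla g\|_{L^2}^{1/2}\,\|\nabla^2 g\|_{L^2}^{1/2}
\]
applied directly to $g=\zeta U$: then $\|\nabla(\zeta U)\|_{L^2}\lesssim \mathcal{Y}^{1/2}_{1,0}+\jb{t}^{-1}\mathcal{E}^{1/2}_{0,0}$ and $\|\nabla^2(\zeta U)\|_{L^2}\lesssim \mathcal{Y}^{1/2}_{2,0}+\jb{t}^{-1}\mathcal{E}^{1/2}_{1,0}$, and the product is bounded by $\mathcal{Y}^{1/2}_{2,0}+\jb{t}^{-1}\mathcal{E}^{1/2}_{1,0}$ as required. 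The constraints \eqref{Constr1}--\eqref{Constr2} are not actually used in any of the three estimates.
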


\begin{lemma}[Lemma 7.1, \cite{Jonov-Sideris-2015}]
\label{IntCalcIneq}
Suppose that $U:[0,T)\times\rr^3\to\rr^{3\times3} \times \rr^3$.
If
\begin{equation}
 k_1+k_2+  |a_1| + |a_2| \leq \pbar
 \quad\text{ and  }\quad
 k_1 + k_2 \leq  \qbar,
\end{equation}
  then we have
\begin{multline}
\ltn{ \; \zeta  | S^{k_1} \Gamma^{a_1} U(t) |  \;  | S^{k_2} \Gamma^{a_2+1} U(t) |\;  } \\
 \lesssim \  \left (   \YYIH{\left[  \frac{\pbar +5 }{2}  \right] , {\left[ \frac{\pbar }{2}\right]}}      +
    \jb{t}^{-1} \EEH{ \left[  \frac{\pbar +3 }{2}  \right] , {\left[ \frac{\pbar }{2}\right]  } }\right )
\EEH{\pbar+1,\qbar},
\end{multline}
provided the right-hand side is finite.

In the special case when $k_2+|a_2| < \pbar$, we have
\begin{multline}
\ltn{ \; \zeta   | S^{k_1} \Gamma^{a_1} U(t) |  \;  | S^{k_2} \Gamma^{a_2+1} U(t) |\;  } \\
\lesssim \  \left (   \YYIH{\left[  \frac{\pbar +5 }{2}  \right] ,  {\left[ \frac{\pbar }{2}\right]  }   }
+    \jb{t}^{-1} \EEH{ \left[  \frac{\pbar +3 }{2}  \right] ,{\left[ \frac{\pbar }{2}\right]  } }   \right )
\EEH{\pbar,\qbar},
\end{multline}
provided the right-hand side is finite.
\end{lemma}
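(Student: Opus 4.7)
The plan is to estimate the $L^2$ norm of the product by a standard Hölder-type split: place one factor in $L^\infty$ via the weighted Sobolev inequalities \eqref{SobInt1}--\eqref{SobHrdy} of the preceding lemma, and the other in $L^2$ via the energy $\mathcal E$. The assumptions $k_1+k_2+|a_1|+|a_2|\le \pbar$ and $k_1+k_2\le \qbar$ guarantee that at least one factor has total derivative count at most $[(\pbar+1)/2]$ and scaling count at most $[\qbar/2]\le[\pbar/2]$; this is the factor that I will place in $L^\infty$.

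In the principal case, after a WLOG arrangement so that $S^{k_1}\Gamma^{a_1}U$ is the low-order factor (i.e., $k_1+|a_1|\le (\pbar+1)/2$ and $k_1\le \pbar/2$), the preceding constraint lemma guarantees that $S^{k_1}\Gamma^{a_1}U$ still satisfies $\nabla\cdot S^{k_1}\Gamma^{a_1}v=0$ and $\nabla\cdot(S^{k_1}\Gamma^{a_1}G)^T=0$, so \eqref{SobInt1} applies directly and gives
\[
\ltn[L^\infty]{\zeta S^{k_1}\Gamma^{a_1}U(t)}\lesssim \YYIH{k_1+|a_1|+2,k_1}+\jb{t}^{-1}\EEH{k_1+|a_1|+1,k_1},
\]
each summand of which is dominated by $\YYIH{[(\pbar+5)/2],[\pbar/2]}+\jb{t}^{-1}\EEH{[(\pbar+3)/2],[\pbar/2]}$. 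Pairing this with the trivial $L^2$ bound $\ltn{S^{k_2}\Gamma^{a_2+1}U(t)}\le \EEH{\pbar+1,\qbar}$ and using $\|\zeta fg\|_{L^2}\le\|\zeta f\|_{L^\infty}\|g\|_{L^2}$ yields the first inequality. For the special case $k_2+|a_2|<\pbar$, the $L^2$ factor sharpens to $\EEH{\pbar,\qbar}$, giving the refined bound.

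The main obstacle is the complementary branch of the case split, in which the natural low-order factor would be the differentiated one $S^{k_2}\Gamma^{a_2+1}U$. There the first factor carries too many derivatives (or too many scalings) to be placed in $L^\infty$ directly, so I would bound the differentiated factor in $L^\infty$ using \eqref{SobInt2} — which produces an extraneous weight $r$ — and simultaneously bound the remaining factor, multiplied by $r^{-1}$, in $L^2$ using the Hardy-type inequality \eqref{SobHrdy}, so that the $r$ and $r^{-1}$ weights cancel. Verifying through all branches that the total derivative and scaling indices of the $L^\infty$-factor stay within $[(\pbar+5)/2]$ and $[\pbar/2]$ respectively — in particular exploiting $\min(k_1,k_2)\le[\qbar/2]\le[\pbar/2]$ to control the scaling count separately from the total derivative count — is where the bulk of the bookkeeping concentrates.
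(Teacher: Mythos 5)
First, a point of reference: the paper does not prove this statement at all --- it is imported verbatim as Lemma 7.1 of \cite{Jonov-Sideris-2015} --- so your proposal can only be measured against the argument in that source, which is indeed the $L^\infty$--$L^2$ splitting you describe. Your Case A (undifferentiated low-order factor in $L^\infty$ via \eqref{SobInt1}, differentiated high-order factor in $L^2$) is sound: the $L^2$ factor is $\ltn{S^{k_2}\Gamma^{a_2+1}U}\lesssim\EEH{k_2+|a_2|+1,k_2}$, which is $\le\EEH{\pbar+1,\qbar}$ in general and $\le\EEH{\pbar,\qbar}$ under the special-case hypothesis, exactly as required.

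The genuine gap is in your Case B, and it kills precisely the special case. The weights in the pairing of \eqref{SobInt2} with \eqref{SobHrdy} do cancel, but \eqref{SobHrdy} transfers a full derivative onto the high-order factor: $\ltn{r^{-1}\zeta\, S^{k_1}\Gamma^{a_1}U}\lesssim\ltn{\zeta\nabla S^{k_1}\Gamma^{a_1}U}+\jb{t}^{-1}\ltn{S^{k_1}\Gamma^{a_1}U}$, whose first term lives at total order $k_1+|a_1|+1$. Take $k_2+|a_2|=0$ and $k_1+|a_1|=\pbar$: this configuration forces you into Case B and automatically satisfies $k_2+|a_2|<\pbar$, yet the Hardy term is then only a summand of $\YYIH{\pbar+1,\qbar}\lesssim\EEH{\pbar+1,\qbar}$ and is \emph{not} controlled by $\EEH{\pbar,\qbar}$. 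So your route yields the first inequality but not the second --- and the second is the one the paper actually relies on to avoid derivative loss in \eqref{LossDrvtv}. The repair is to drop the $r/r^{-1}$ device in Case B altogether: the low-order differentiated factor $S^{k_2}\Gamma^{a_2+1}U$ is itself just a vector-field derivative of $U$ of total order $k_2+|a_2|+1\le\left[\frac{\pbar+1}2\right]$ and scaling order $k_2\le\left[\frac{\pbar}2\right]$, so \eqref{SobInt1} applies to it directly and gives $\ltn[L^\infty]{\zeta\, S^{k_2}\Gamma^{a_2+1}U}\lesssim\YYIH{k_2+|a_2|+3,k_2}+\jb{t}^{-1}\EEH{k_2+|a_2|+2,k_2}$, with indices within $\left[\frac{\pbar+5}2\right]$ and $\left[\frac{\pbar}2\right]$; the remaining undifferentiated factor then goes in plain $L^2$ at order $k_1+|a_1|\le\pbar$, $k_1\le\qbar$, i.e.\ $\EEH{\pbar,\qbar}$, in both the general and the special case. (The $r/r^{-1}$ pairing you propose is genuinely used in this paper, but in the estimate of $I_1$ in the low-energy section, where the authors deliberately accept the extra derivative because they want the sharper localized quantity $\YYI{p^\ast+1,p^\ast}$ rather than an energy.) A second, smaller soft spot you flagged but did not resolve: selecting the $L^\infty$ factor by total order alone can put a factor with scaling count $\left[\frac{\pbar+1}2\right]>\left[\frac{\pbar}2\right]$ into $L^\infty$ when $\pbar$ is odd; in that corner one must swap roles, which the \eqref{SobInt1}-based Case B above accommodates.
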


\begin{lemma}[Lemma 7.2, \cite{Jonov-Sideris-2015}]
\label{ExtCalcIneq}
Suppose that $U:[0,T)\times\rr^3\to \rr^{3\times3} \times \rr^3$.
If
\begin{equation}
k_1+k_2+  |a_1| + |a_2| \leq \pbar
\quad \text{ and  }\quad
k_1+k_2\leq  \qbar,
\end{equation}
 then we have
\begin{equation}
\ltn{ \; \eta   |S^{k_1} \Gamma^{a_1}  U(t) |  \;   |S^{k_2} \Gamma^{a_2+1} U(t)| \;}
\lesssim \ \jb{t}^{-1} \EEH{ \left[  \frac{\pbar +5 }{2}  \right] ,  {\left[ \frac{\pbar }{2}\right]  } }
\EEH{\pbar+1,\qbar},
\end{equation}
provided the right-hand side is finite.

In the special case when $k_2+|a_2| < \pbar$, we have
\begin{equation}
\ltn{ \; \eta   |S^{k_1} \Gamma^{a_1}  U(t) |  \;   |S^{k_2} \Gamma^{a_2+1} U(t)| \;}
\lesssim \ \jb{t}^{-1} \EEH{ \left[  \frac{\pbar +5 }{2}  \right] ,  {\left[ \frac{\pbar }{2}\right]  }}
\EEH{\pbar,\qbar},
\end{equation}
provided the right-hand side is finite.
\end{lemma}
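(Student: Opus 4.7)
The plan is to reduce the bilinear $L^2$ bound by H\"older's inequality, placing the factor of lower order in $L^\infty$ and the other in $L^2$, and to extract the $\jb{t}^{-1}$ gain from the exterior support of $\eta$.

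First, since $\eta(t,x)=0$ for $|x|\le \sigma\jb{t}/4$, one has $\jb{t}\,\eta(t,x)\lesssim r$ on $\supp\eta$. H\"older's inequality then yields
\[
\ltn{\eta\,|S^{k_1}\Gamma^{a_1}U|\,|S^{k_2}\Gamma^{a_2+1}U|}\ \lesssim\ \jb{t}^{-1}\,\|r\,\eta\,S^{k_1}\Gamma^{a_1}U\|_{L^\infty}\,\ltn{S^{k_2}\Gamma^{a_2+1}U}.
\]

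Second, I would invoke a weighted rotational Sobolev inequality in three dimensions of the form
\[
\|r\,h\|_{L^\infty(\rr^3)}\ \lesssim\ \sum_{|b|\le 2}\ltn{\Gamma^b h},
\]
obtained by combining $H^2(S^2)\hookrightarrow L^\infty(S^2)$ with a one-dimensional Sobolev step in $r$ together with the decomposition $\nabla=\omega\partial_r-(\omega/r)\wedge\Omega$ from \eqref{GradDecomp}. Applied to $h=\eta\,S^{k_1}\Gamma^{a_1}U$ and using \eqref{WeightDer} to dispose of derivatives landing on $\eta$, this introduces at most two additional vector fields from $\Gamma$ and no new scaling operators.

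Third, count indices. Relabel so that $k_1+|a_1|\le k_2+|a_2|$. The constraint $k_1+k_2+|a_1|+|a_2|\le \pbar$ forces $k_1+|a_1|\le \left[\frac{\pbar}{2}\right]$, and in particular $k_1\le \left[\frac{\pbar}{2}\right]$. Since $\left[\frac{\pbar}{2}\right]+2\le \left[\frac{\pbar+5}{2}\right]$ for every integer $\pbar$, the $L^\infty$ factor is controlled by $\EEH{\left[\frac{\pbar+5}{2}\right],\left[\frac{\pbar}{2}\right]}$. The $L^2$ factor $S^{k_2}\Gamma^{a_2+1}U$ satisfies $k_2+|a_2|+1\le \pbar+1$ and $k_2\le \qbar$, giving a bound by $\EEH{\pbar+1,\qbar}$. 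In the special case $k_2+|a_2|<\pbar$, the degree improves by one and $\EEH{\pbar,\qbar}$ suffices, yielding the second assertion.

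The principal obstacle is the weighted Sobolev inequality $\|rh\|_{L^\infty}\lesssim \sum_{|b|\le 2}\|\Gamma^b h\|_{L^2}$, which is precisely where the spatial dimension $n=3$ is essential, via the borderline embedding $H^2(S^2)\hookrightarrow L^\infty(S^2)$. The remaining bookkeeping, in particular commuting the cut-off $\eta$ through $\Gamma^b$, is routine given \eqref{WeightDer}, since each derivative of $\eta$ is bounded on its support.
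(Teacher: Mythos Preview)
The paper does not prove this lemma; it is imported verbatim from \cite{Jonov-Sideris-2015} (Lemma~7.2). Your sketch reconstructs what is essentially the standard argument behind that citation: extract the factor $\jb t^{-1}$ from the support condition $r\gtrsim\jb t$ on $\supp\eta$, then use the weighted Sobolev bound $\|r\,h\|_{L^\infty}\lesssim\sum_{|b|\le2}\|\Gamma^b h\|_{L^2}$ (which is a direct consequence of the inequality from \cite{Sideris-2000} quoted in the proof of Proposition~\ref{SpecialSob}), and finally count indices. This is the right route.

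One point to tighten: you cannot ``relabel so that $k_1+|a_1|\le k_2+|a_2|$,'' because the two factors are not interchangeable---the second carries an extra $\Gamma$. What you need instead is a case split. If $k_1+|a_1|\le k_2+|a_2|$, put the $L^\infty$ norm on the first factor, exactly as you wrote; then $k_1+|a_1|+2\le\bigl[\tfrac{\pbar}{2}\bigr]+2\le\bigl[\tfrac{\pbar+5}{2}\bigr]$ and the $L^2$ factor costs $\pbar+1$ derivatives. If instead $k_2+|a_2|<k_1+|a_1|$, put the $L^\infty$ norm on the second factor $S^{k_2}\Gamma^{a_2+1}U$; then $k_2+|a_2|+1\le\bigl[\tfrac{\pbar}{2}\bigr]$, so after the two Sobolev derivatives one lands at $\bigl[\tfrac{\pbar}{2}\bigr]+2\le\bigl[\tfrac{\pbar+5}{2}\bigr]$ with $k_2\le\bigl[\tfrac{\pbar}{2}\bigr]$, while the $L^2$ factor $S^{k_1}\Gamma^{a_1}U$ needs only $\EEH{\pbar,\qbar}$. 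Both cases then combine into the stated bound, and the special-case refinement follows immediately since in the first case $k_2+|a_2|+1\le\pbar$. With this correction your argument is complete and matches the intended proof.
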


This next result takes into account the constraints, and it will play a key role
in assessing nonlinear interactions in \eqref{I2'Bound}.

\begin{prop}
\label{SpecialSob}
Suppose that $U=(G,v):[0,T)\times\rr^3\to \rr^{3\times3} \times \rr^3$ satisfies
\eqref{Constr1},\eqref{Constr2} and $\EE{2,0}<\infty$, for all $t\in[0,T)$.
Then
\begin{equation}
\label{SobOmega}
\ltn[L^\infty]{\;\eta\; G(t)^\top\omega}+\ltn[L^\infty]{\;\eta\;\omega\cdot v(t)}
\lesssim 
\jb{t}^{-3/2} \EEH{2,0}.
\end{equation}
\end{prop}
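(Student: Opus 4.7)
My plan is to exploit the fact that, under \eqref{Constr1} and \eqref{Constr2}, both $\omega\cdot v$ and each component of $G^\top\omega$ arise as the radial component of a divergence-free vector field on $\rr^3$: \eqref{Constr2} says exactly that each column of $G$ is divergence-free. By the divergence theorem on a ball of radius $r$, any such radial component has mean zero on every sphere centered at the origin. This cancellation, combined with the pointwise identity below, is what will upgrade the standard exterior Sobolev decay $\jb{t}^{-1}$ to the advertised $\jb{t}^{-3/2}$. It therefore suffices to prove the bound for $f=\omega\cdot v$; applying the same argument to each column $g^{(i)}$ of $G$ controls $G^\top\omega$.

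The first ingredient I would establish is the identity
\[
\partial_r(r^2\,\omega\cdot v)=r\,(\omega\wedge\Omega)\cdot v^T,\qquad v^T=v-(\omega\cdot v)\omega,
\]
which follows from the decomposition $\nabla=\omega\partial_r-(\omega\wedge\Omega)/r$ together with $\omega\cdot v^T=0$ and the constraint $\nabla\cdot v=0$. Since its right-hand side is pointwise controlled by $Cr(|v|+|\omt v|)$, this identity effectively trades a radial derivative for a rotational one. The second ingredient is the commutation $\omt^a(\omega\cdot v)=\omega\cdot\omt^a v$ for any $a$, which follows on scalars (where $\omt=\Omega$) from a direct calculation in which the contributions $(Z\omega)\cdot v$ cancel. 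Combined with the constraint lemma, this guarantees that $\omega\cdot\omt^a v$ is again mean-zero on every sphere, so Poincar\'e on $S^2$ applies to each such quantity.

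With these tools in hand, I would combine one round of mean-zero Poincar\'e on $S^2$ with the fundamental-theorem bound
\[
r^2\,\|g(r\cdot)\|_{L^2(S^2)}^{2}\;\lesssim\;\|g\|_{H^1(|y|\geq r)}^{2}
\]
applied to $g=\omega\cdot\omt v$. Since $\|g\|_{L^2(\rr^3)}\leq\|\omt v\|\lesssim\EEH{2,0}$ and $|\nabla g|\leq(1/r)|\omt v|+|\nabla\omt v|$, whose $L^2$-norm is controlled by $\|\nabla\omt v\|\lesssim\EEH{2,0}$ via the Hardy inequality, this yields the sphere-integrated decay
\[
r^4\,\|\omega\cdot v(r\cdot)\|_{L^2(S^2)}^{2}\;\lesssim\;\EE{2,0}.
\]
Promoting to an $L^\infty(S^2)$ bound by a two-dimensional Sobolev/Agmon inequality on the sphere (using the mean-zero property at each stage) then gives
\[
r^{3}\,\sup_\omega|\omega\cdot v(r\omega)|^{2}\;\lesssim\;\EE{2,0},
\]
and since $r\gtrsim\jb{t}$ on the support of $\eta$, the proposition's bound on $\omega\cdot v$ follows; the same argument on each column of $G$ yields the bound on $G^\top\omega$.

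The main obstacle is the careful bookkeeping in the final Sobolev step so that the estimate involves only $\EEH{2,0}$: a na\"ive application of $H^{2}(S^{2})\hookrightarrow L^{\infty}(S^{2})$ would spend two additional rotational derivatives on $f$ and one more on the FTC step, pushing the total to three $\omt$-derivatives of $v$. The required saving must come from the fact that $\omega\cdot\omt^{a}v$ is mean zero on each sphere for every $a\geq 0$, enabling a Poincar\'e-improved Sobolev/Agmon inequality that replaces each extra tangential derivative by a factor of $r^{-1}$ and keeps the count of $\omt$-derivatives of $v$ at two.
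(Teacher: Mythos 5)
Your starting point is the same as the paper's: the constraints together with the decomposition $\nabla=\omega\partial_r-\frac{\omega}{r}\wedge\Omega$ show that the radial derivative of the radial components $\omega\cdot v$ and $G^\top\omega$ costs only $r^{-1}$ times rotational derivatives, and $\omt$ commutes with contraction against $\omega$; that is exactly the paper's key observation \eqref{NullConstr}. The gap is in how you convert this into $\jb{t}^{-3/2}$ decay. First, your intermediate claim $r^4\|(\omega\cdot v)(r\cdot)\|^2_{L^2(S^2)}\lesssim\EE{2,0}$ does not follow from the ingredients you list: mean-zero Poincar\'e on $S^2$ replaces $\omega\cdot v$ by $\omega\cdot\omt v$ at no cost in $r$, and the fundamental-theorem trace bound with $\|g\|_{H^1(|y|\geq r)}$, estimated via Hardy and $\ltn{\nabla\omt v}$ as you propose, yields only $r^2\|(\omega\cdot v)(r\cdot)\|^2_{L^2(S^2)}\lesssim\EE{2,0}$; even using the constraint identity inside the FTC step gives $r^3$, not $r^4$. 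Second, and more fundamentally, the mechanism you invoke to close the $L^\infty(S^2)$ step --- a ``Poincar\'e-improved Sobolev/Agmon inequality that replaces each extra tangential derivative by a factor of $r^{-1}$'' --- is not a real inequality. The rotational fields $\Omega$ are scale invariant, so Poincar\'e for mean-zero functions on the unit sphere, $\|f\|_{L^2(S^2)}\lesssim\|\Omega f\|_{L^2(S^2)}$, has an $r$-independent constant and cannot produce factors of $r^{-1}$; moreover it adds angular derivatives rather than removing them, which is the opposite of what your derivative count requires.

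The obstacle you correctly identify (a third $\omt$ landing on $v$) is resolved in the paper not by the mean-zero property --- which is never used --- but by the asymmetric, interpolative form of the weighted Sobolev inequality from Lemma 3.3 of \cite{Sideris-2000}: $\ltn[L^\infty]{rU}^2\lesssim\sum_{|a|\le1}\ltn{\partial_r\omt^aU}\sum_{|a|\le2}\ltn{\omt^aU}$. Only the first factor carries a radial derivative, and it needs at most one rotation; the second factor needs two rotations but no radial derivative, so the total count stays at two. The gain of $\jb{t}^{-1/2}$ over the generic $\jb{t}^{-1}$ then comes entirely from the radial direction: applying this inequality to $\eta\,\omega\otimes\omega\,U$ and using \eqref{NullConstr}, the first factor is $O(r^{-1})\sum_{|a|\le2}\ltn{\omt^aU}\lesssim\jb{t}^{-1}\EEH{2,0}$ on the support of $\eta$. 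Your argument as written does not contain a valid substitute for this step.
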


\begin{proof}
Taking $U=(G,v)$ and $\omega=x/|x|$,  define 
\[
\omega\otimes\omega \;U(t,x)= (\omega\otimes\omega\; G(t,x), \omega\otimes\omega \;v(t,x)).
\]
Note that 
\[
\omt \;\omega= 0,
\]
and so 
\[
\omt\;[\;\omega\otimes\omega \;U\;] = \omega\otimes\omega\; \omt U.
\]

The key point is that, from the gradient decomposition \eqref{GradDecomp} and the constraints \eqref{ConstrVectrFld1}, \eqref{ConstrVectrFld2}, 
we have
\begin{multline}
\label{NullConstr}
\sum_{|a|\le1}|\omega\otimes\omega \;\partial_r\omt^aU|\\
\lesssim
\sum_{|a|\le1}\left[|(\partial_r\omt^aG)^\top\omega|+|\omega\cdot\partial_r\omt^av|\right]
\lesssim 
\frac1r\sum_{|a|\le2}|\omt^aU|.
\end{multline}

We appeal to the following inequality
\begin{equation}
\ltn[L^\infty]{rU} 
\lesssim \ \left(\sum_{|a|\le1}\ltn{\partial_r\omt^a U}\sum_{|a|\le 2}\ltn{\omt^aU}\right)^{1/2}
\end{equation}
which follows from Lemma 3.3 (3.14b) of \cite{Sideris-2000}.
Apply this to
$\eta\; \omega\otimes\omega\; U$.
Since $\jb{t}\lesssim r$ on the support of $\eta$, we have  
\begin{align}
\jb{t}^2&\left[\;\ltns[L^\infty]{\;\eta\; G(t)^\top\omega}+\ltns[L^\infty]{\;\eta\;\omega\cdot v(t)}\right]\\
&\lesssim \ltns[L^\infty]{\;r\;\eta\; \omega\otimes\omega\;U}\\
&\lesssim \sum_{|a|\le1}\ltn[L^2]{\partial_r\omt^a [\eta\; \omega\otimes\omega\; U]}
\sum_{|a|\le 2}\ltn[L^2]{\omt^a[\eta\; \omega\otimes\omega\; U]}\\
&\lesssim \sum_{|a|\le1}\ltn[L^2]{\omega\otimes\omega\;\partial_r [\eta\;  \omt^aU]}
\sum_{|a|\le 2}\ltn[L^2]{\omt^a U}.
\end{align}
Using \eqref{WeightDer} and \eqref{NullConstr}, this is bounded by
\begin{multline}
 \sum_{|a|\le1}\left[\ltn[L^2]{\omega\otimes\omega\;\eta\;\partial_r   \omt^aU}
+\jb{t}^{-1}\ltn{\omt^aU}\right]
\sum_{|a|\le 2}\ltn[L^2]{\omt^a U}\\
\lesssim\jb{t}^{-1}
\sum_{|a|\le 2}\ltns[L^2]{\omt^a U}
\lesssim\jb{t}^{-1}\EE{2,0}.
\end{multline}
This shows that
\begin{equation}
\ltns[L^\infty]{\;\eta\; G(t)^\top\omega}+\ltns[L^\infty]{\;\eta\;\omega\cdot v(t)}
\lesssim \jb{t}^{-3}\EE{2,0},
\end{equation}
from which the Proposition follows.
\end{proof}

Following our notational conventions, we may  write $(G^\top\omega)_j=\omega\cdot G_{,j}$.
\section{Estimates for the Linearized System }

In this section we provide estimates for solutions of the linearized system
%
\begin{align}
\label{Pde1Lin}
& \partial_t G - \nabla v = H \\
\label{Pde2Lin}
&\partial_t v - \nabla \cdot G - \nu \Delta v = h\\
\label{Constr3Lin}
&\partial_k G_{ij} - \partial_j G_{ik}   =   Q_{ijk}\quad i,j,k=1,2,3.
\end{align}

%
\begin{lemma}
\label{IntDecayLemma}
Assume that $\sigma$ in \eqref{CutOffs} is sufficiently small and that $\nu\le1$.
Assume that the functions $H$, $\nabla \cdot H$, $Q$, and $h$ all belong to the space
$ L^2([0,T];L^2(\rr^3))$, for some $0<T<\infty$.

If $U=(G,v)$ is a solution of \eqref{Pde1Lin},\eqref{Pde2Lin},\eqref{Constr3Lin} such that
\begin{equation}
\sup_{0\le t\le T}\EE[U]{1,1}<\infty,
\end{equation}
then
for any  $0\le\theta\le1$,
\begin{multline}
\int_0^T \jb{t}^{\theta} \left[   \ltns{\zeta \nabla  U} +
\nu^2 \ltns{\zeta  \Delta v} \right] dt \\
\lesssim  \nu \jb{T}^{\theta-2} \EE[U][T]{1,0}
  + \int_0^T \jb{t}^{\theta-2} \EE [U]{1,1} dt  \\
  +  \int_0^T \jb{t}^{\theta} \left[
  2 \nu  \ltip{\zeta  \nabla \cdot G}{\zeta  \nabla \cdot H} \right.  \\
  + \left.   \ltns{\zeta Q} + \ltns{\zeta H} + \ltns{\zeta  h} \right] dt.
\end{multline}
\end{lemma}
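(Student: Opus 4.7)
The plan is to adapt the weighted multiplier method from the scalar dissipative model of \cite{Jonov-Sideris-2015}. The underlying principle is that when $\nu=0$ the system \eqref{Pde1Lin}--\eqref{Pde2Lin} is a symmetric hyperbolic system of wave type whose hyperbolic structure already provides local gradient decay uniformly in $\nu$; the viscous term $\nu\Delta v$ then contributes additional boundary and commutator pieces that are handled by careful integration by parts.

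The core identity is derived by pairing equation \eqref{Pde2Lin} in $L^2$ with the multiplier $-2\zeta^2\jb{t}^\theta\nabla\cdot G$, which directly produces $2\jb{t}^\theta\ltns{\zeta\nabla\cdot G}$ on the left. The $\partial_t v$ cross-term is processed by first integrating by parts in space (transferring the divergence from $G$ onto $\partial_t v$ to give $\int\zeta^2\jb{t}^\theta G:\partial_t\nabla v$) and then in time, using \eqref{Pde1Lin} in the form $\partial_t G=\nabla v+H$: this generates a total derivative $\frac{d}{dt}\int\zeta^2\jb{t}^\theta G:\nabla v$, together with $\jb{t}^\theta\ltns{\zeta\nabla v}$ and lower-order remainders. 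The viscous cross-term $2\nu\int\zeta^2\jb{t}^\theta\Delta v\cdot\nabla\cdot G$ is treated by two integrations by parts in space, invoking the matrix-curl constraint \eqref{Constr3Lin} in the form $\nabla\nabla\cdot G=\Delta G+(\text{div of }Q)$ together with the identity $\partial_t\nabla\cdot G=\Delta v+\nabla\cdot H$ obtained by applying $\nabla\cdot$ to \eqref{Pde1Lin}. This yields a further total derivative $\frac{\nu}{2}\frac{d}{dt}\int\zeta^2\jb{t}^\theta|\nabla\cdot G|^2$ whose terminal value at $t=T$ accounts for the $\nu\jb{T}^{\theta-2}\EE[U][T]{1,0}$ contribution (after using $r\lesssim\sigma\jb{t}$ on $\supp\zeta$ to convert $\jb{T}^\theta$ to $\jb{T}^{\theta-2}$), plus the source term $2\nu\langle\zeta\nabla\cdot G,\zeta\nabla\cdot H\rangle$ on the right.

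Two further upgrades are needed. First, to promote $\ltns{\zeta\nabla\cdot G}$ to the full $\ltns{\zeta\nabla G}$, invoke the matrix-curl constraint \eqref{Constr3Lin} rowwise: each row $G_{i\cdot}$ has divergence $(\nabla\cdot G)_i$ and curl bounded by $Q$, so a standard div-curl identity gives
\[
\int\zeta^2|\nabla G|^2\,dx\lesssim\int\zeta^2(|\nabla\cdot G|^2+|Q|^2)\,dx+\text{commutators involving }\nabla\zeta.
\]
Second, to extract $\nu^2\ltns{\zeta\Delta v}$, I multiply \eqref{Pde2Lin} separately by $-\zeta^2\jb{t}^\theta\nu\Delta v$ and integrate by parts; this yields $\nu^2\jb{t}^\theta\ltns{\zeta\Delta v}$ coercively, with the cross-terms absorbed via Cauchy--Schwarz against the previously-controlled $\jb{t}^\theta\ltns{\zeta\nabla\cdot G}$ and the source $\jb{t}^\theta\ltns{\zeta h}$.

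After time-integrating the combined identity on $[0,T]$, the commutator remainders carry factors $|\partial_{t,x}\zeta|\lesssim\jb{t}^{-1}$ from \eqref{WeightDer} and are supported where $r\sim\jb{t}$; there the Hardy-type inequality $|\partial_r f|\lesssim r^{-1}|S_0 f|$ converts radial gradients into $S_0$-derivatives of $U$, producing the $\int_0^T\jb{t}^{\theta-2}\EE[U]{1,1}\,dt$ term on the right-hand side. The main obstacle is the sign bookkeeping: all cutoff and viscous commutators must be dominated by the positive quadratic terms, which forces $\sigma$ in \eqref{CutOffs} to be chosen sufficiently small and uses the hypothesis $\nu\le 1$ to prevent the viscous contributions from destroying coercivity.
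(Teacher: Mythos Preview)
Your multiplier scheme has a genuine gap: the total time derivative $\frac{d}{dt}\int\zeta^2\jb{t}^\theta G:\nabla v\,dx$ that you produce leaves, after integration on $[0,T]$, a boundary contribution of size $\jb{T}^\theta\int\zeta^2|G(T)||\nabla v(T)|\,dx\lesssim\jb{T}^\theta\EE[U][T]{1,0}$ with \emph{no} factor of $\nu$ and with $\jb{T}^\theta$ rather than $\jb{T}^{\theta-2}$. Your claim that ``$r\lesssim\sigma\jb{t}$ on $\supp\zeta$ converts $\jb{T}^\theta$ to $\jb{T}^{\theta-2}$'' cannot apply here, since this term carries no inverse power of $r$ to trade for time decay. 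In addition, if you track the signs in your own computation, pairing $-2\zeta^2\jb{t}^\theta\nabla\cdot G$ with $\partial_t v$ and following your integration by parts yields $-2\jb{t}^\theta\ltns{\zeta\nabla v}$, not $+$; so $\ltns{\zeta\nabla v}$ and $\ltns{\zeta\nabla\cdot G}$ arise with opposite signs and you do not get a coercive left-hand side from this single multiplier. Finally, $\EE{1,1}$ is built from the \emph{space--time} scaling $S=t\partial_t+r\partial_r$, not $S_0=r\partial_r$; the trivial identity $|\partial_r f|=r^{-1}|S_0 f|$ on cutoff commutators does not explain how $S$ enters the estimate.

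The paper's argument is of a different nature and is exactly designed to avoid these issues. One multiplies \eqref{Pde1Lin} and \eqref{Pde2Lin} themselves by $t$ and uses $t\partial_t=S-r\partial_r$ to write, for instance, $t\nabla v=SG-r\partial_r G-tH$; then one takes the $\zeta$-weighted $L^2$ norm squared of each rewritten equation and adds. This gives
\[
t^2\Big[\ltns{\zeta(\nabla\cdot G+\nu\Delta v)}+\ltns{\zeta\nabla v}\Big]
\lesssim \ltns{\zeta r\partial_r U}+\ltns{\zeta SU}+t^2\big(\ltns{\zeta H}+\ltns{\zeta h}\big).
\]
The term $\ltns{\zeta SU}$ is precisely what contributes $\EE{1,1}$, and $\ltns{\zeta r\partial_r U}\le\sigma^2\jb{t}^2\ltns{\zeta\nabla U}$ is absorbed on the left for $\sigma$ small. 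Only the viscous cross-term $2\nu t^2\ltip{\zeta\nabla\cdot G}{\zeta\Delta v}$, coming from expanding the perfect square, requires a time integration by parts (via $\Delta v=\partial_t\nabla\cdot G-\nabla\cdot H$). Because this cross-term already carries both the prefactor $\nu$ and the factor $t^2$, after multiplying the whole identity by $\jb{t}^{\theta-2}$ and integrating, its boundary contribution is exactly the stated $\nu\jb{T}^{\theta-2}\EE[U][T]{1,0}$. The ``multiply by $t$ and use $S$'' step is the missing idea in your outline.
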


\begin{proof}
Multiplying \eqref{Pde1Lin} and \eqref{Pde2Lin} by $t$ and using that $S=t\partial_t + r \partial_r$, we have that
\begin{align}
& t \nabla v = -r\partial_r G + S G - t H \\
& t \nabla \cdot G + t \nu \Delta v = -r\partial_r v + S v - t h.
\end{align}
Next, we multiply each equation by $\zeta$ and take the $L^2$-norm
\begin{align}
& \ltns{\zeta t \nabla v}
\leq \ltns{\zeta r\partial_r G} + \ltns{\zeta S G} + \ltns{\zeta t H} \\
& \ltns{\zeta t \nabla \cdot G} + \nu^2 \ltns{\zeta t \Delta v}
+ 2 \ltip{\zeta t \nabla \cdot G}{\zeta t \nu \Delta v} \\
& \hspace{5cm}
\leq \ltns{\zeta r\partial_r v} + \ltns{\zeta S v} + \ltns{\zeta t h}.
\end{align}
Adding the two inequalities, we obtain
\begin{multline} \label{LETemp1}
t^2 \left[ \ltns{\zeta \nabla \cdot G} +
2 \nu \ltip{\zeta  \nabla \cdot G}{\zeta \Delta v}  +
\nu^2 \ltns{\zeta  \Delta v} + \ltns{ \zeta \nabla v} \right] \\
\leq \ltns{\zeta r\partial_r U} + \EE [U]{1,1} +
t^2 (\ltns{\zeta H} + \ltns{\zeta  h}),
\end{multline}
where $U = (G,v)$.

Taking the divergence of \eqref{Pde1Lin}, we have
\begin{equation}
\Delta v = \partial_t \nabla \cdot G - \nabla \cdot H
\end{equation}
and so we can write the inner product as
\begin{equation}\label{LETemp2}
2 \nu \ltip{\zeta  \nabla \cdot G}{\zeta \Delta v}
\; = \; 2 \nu \ltip{\zeta  \nabla \cdot G}{\zeta \partial_t \nabla \cdot G}
- 2 \nu \ltip{\zeta  \nabla \cdot G}{\zeta  \nabla \cdot H}.
\end{equation}
The first term can be bounded below as follows:
\begin{align} \label{LETemp3}
& 2 \nu \ltip{\zeta  \nabla \cdot G}{\zeta \partial_t \nabla \cdot G} \\
& \hspace{2cm}
= \nu \int_{\rr^3} \zeta^2 \partial_t |\nabla \cdot G |^2 dx \\
& \hspace{2cm}
\geq \nu \partial_t \ltns{\zeta \nabla \cdot G } - C \nu \int_{\rr^3} \zeta \jb{t}^{-1} |\nabla \cdot G|^2 dx \\
& \hspace{2cm}
\geq \nu \partial_t \ltns{\zeta \nabla \cdot G } - \frac12 \ltns{\zeta \nabla \cdot G }\\
&\hs{3}
-2 C^2 \nu^2 \jb{t}^{-2} \ltns{ \nabla \cdot G },
\end{align}
where we have used Young's inequality and the fact that by \eqref{WeightDer},
$\partial_t \zeta^2 \leq C \zeta \jb{t}^{-1}$,
for some constant $C$. Inserting \eqref{LETemp2} and \eqref{LETemp3} into \eqref{LETemp1} and using
 $\ltns{ \nabla \cdot G(t) }\lesssim \EE [U]{1,1}$, we have
\begin{multline} \label{LETemp4}
t^2 \left[  \nu \partial_t \ltns{\zeta \nabla \cdot G} +  \frac12 \ltns{\zeta \nabla \cdot G} +
\nu^2 \ltns{\zeta  \Delta v} + \ltns{ \zeta \nabla v} \right] \\
\lesssim \ltns{\zeta r\partial_r U} + \EE [U]{1,1} +
2 \nu t^2 \ltip{\zeta  \nabla \cdot G}{\zeta  \nabla \cdot H} \\
+ t^2 (\ltns{\zeta H} + \ltns{\zeta  h}).
\end{multline}

Choosing $0 \leq \theta \leq 1$, we multiply \eqref{LETemp4} by $\jb{t}^{\theta-2}$, and then
we integrate
\begin{multline}\label{LETemp4.1}
\int_0^T t^2 \jb{t}^{\theta-2} \left[  \tfrac12 \ltns{\zeta \nabla \cdot G} +
\nu^2 \ltns{\zeta  \Delta v} + \ltns{ \zeta \nabla v} \right] dt \\
\lesssim \int_0^T \jb{t}^{\theta-2} \left[
\ltns{\zeta r\partial_r U} + \EE [U]{1,1} +
2 \nu t^2 \ltip{\zeta  \nabla \cdot G}{\zeta  \nabla \cdot H} \right. \\
+ \left. t^2 (\ltns{\zeta H} + \ltns{\zeta  h}) \right] dt \\
- \int_0^T t^2 \jb{t}^{\theta-2} \nu \partial_t \ltns{\zeta \nabla \cdot G} dt.
\end{multline}

Next, we estimate the time-derivative term on the right hand side. Since
\begin{equation}
\partial_t ( \nu t^2 \jb{t}^{\theta-2}) =\nu t \jb{t}^{\theta-4}(2+\theta t^2)\le 2\nu t \jb{t}^{\theta-2}\leq \tfrac14 t^2 \jb{t}^{\theta-2} +
C \nu^2 \jb{t}^{\theta-2},
\end{equation}
integration by parts yields
\begin{align}
- \int_0^T t^2 \jb{t}^{\theta-2}\; \nu \partial_t \ltns{ &\zeta \nabla \cdot G} dt \\
& \leq \int_0^T \left( \tfrac14 t^2 \jb{t}^{\theta-2} + C \nu^2 \jb{t}^{\theta-2} \right)
\ltns{\zeta \nabla \cdot G} dt \\
& \leq \tfrac14 \int_0^T  t^2 \jb{t}^{\theta-2} \ltns{\zeta \nabla \cdot G} dt
+ C \int_0^T  \jb{t}^{\theta-2} \EE[U]{1,0}dt.
\end{align}
Substitution in \eqref{LETemp4.1} gives
\begin{multline}
\int_0^T t^2 \jb{t}^{\theta-2} \left[  \tfrac14 \ltns{\zeta \nabla \cdot G} +
\nu^2 \ltns{\zeta  \Delta v} + \ltns{ \zeta \nabla v} \right] dt \\
\lesssim \int_0^T \jb{t}^{\theta-2} \left[
\ltns{\zeta r\partial_r U} + \EE [U]{1,1} +
2 \nu t^2 \ltip{\zeta  \nabla \cdot G}{\zeta  \nabla \cdot H} \right. \\
+ \left. t^2 (\ltns{\zeta H} + \ltns{\zeta  h}) \right] dt.
\end{multline}
By Lemma \ref{Div2Grad} (below), we can relpace the divergence term on the left-hand side by the full gradient
\begin{multline}
\int_0^T t^2 \jb{t}^{\theta-2} \left[   \ltns{\zeta \nabla  U} +
\nu^2 \ltns{\zeta  \Delta v} \right] dt \\
\lesssim \int_0^T \jb{t}^{\theta-2} \left[
\ltns{\zeta r\partial_r U} + \EE [U]{1,1} +
2 \nu t^2 \ltip{\zeta  \nabla \cdot G}{\zeta  \nabla \cdot H} \right. \\
+ \left. t^2 ( \ltns{\zeta Q} + \ltns{\zeta H} + \ltns{\zeta  h}) \right] dt.
\end{multline}
Using that $t^2 = \jb{t}^2-1$ and $\ltns{\zeta \nabla  U(t)}\lesssim\EE [U]{1,1}$, we can write
\begin{multline}
\int_0^T \jb{t}^{\theta} \left[   \ltns{\zeta \nabla  U} +
\nu^2 \ltns{\zeta  \Delta v} \right] dt \\
\lesssim \int_0^T \jb{t}^{\theta-2} \left[
\ltns{\zeta r\partial_r U}  +  \EE [U]{1,1}  +  \nu^2 \ltns{\zeta  \Delta v}  \right.\\
+ \left. 2 \nu t^2 \ltip{\zeta  \nabla \cdot G}{\zeta  \nabla \cdot H} \right. \\
+ \left. t^2 ( \ltns{\zeta Q} + \ltns{\zeta H} + \ltns{\zeta  h}) \right] dt.
\end{multline}
Since $r \leq \sigma \jb{t}$ on the support of $\zeta$, we have the following estimate:
\begin{equation}
\int_0^T \jb{t}^{\theta-2} \ltns{\zeta r\partial_r U} dt
    \;\leq\; \int_0^T \sigma^2 \jb{t}^{\theta} \ltns{\zeta \partial_r U} dt.
\end{equation}
For small enough $\sigma$, the last term can be absorbed on the left-hand side of the main
inequality and thus, we obtain
\begin{multline} \label{LETemp5}
\int_0^T \jb{t}^{\theta} \left[   \ltns{\zeta \nabla  U} +
\nu^2 \ltns{\zeta  \Delta v} \right] dt \\
\hspace{1cm}  \lesssim \int_0^T \jb{t}^{\theta-2} \left[
\EE [U]{1,1}  +  \nu^2 \ltns{\zeta  \Delta v}
+ \left. 2 \nu t^2 \ltip{\zeta  \nabla \cdot G}{\zeta  \nabla \cdot H}  \right.\right.\\
\hspace{2cm}   + \left. t^2 ( \ltns{\zeta Q} + \ltns{\zeta H} + \ltns{\zeta  h}) \right] dt.
\end{multline}
The Laplacian term on the right has the following bound:
\begin{align}
\label{Laplace}
\int_0^T &\jb{t}^{\theta-2} \nu^2 \ltns{\zeta  \Delta v}dt
\; = \; \int_0^T \jb{t}^{\theta-2} \nu^2 \frac{d}{dt} \int_0^t \ltns{\zeta  \Delta v}dsdt \\
&  =\jb{T}^{\theta-2} \nu^2 \int_0^T \ltns{\zeta  \Delta v}dt
\;+\; \int_0^T (2-\theta)t \jb{t}^{\theta-4} \nu^2  \int_0^t \ltns{\zeta  \Delta v}dsdt \\
&  \lesssim   \nu \jb{T}^{\theta-2} \EE[U][T]{1,0}
   \;+\; \nu \int_0^T \jb{t}^{\theta-2} \EE[U][t]{1,0}dt.
\end{align}
Substituting into \eqref{LETemp5} and using that $\nu \leq 1$, we arrive at
\begin{multline}
\int_0^T \jb{t}^{\theta} \left[   \ltns{\zeta \nabla  U} +
\nu^2 \ltns{\zeta  \Delta v} \right] dt \\
\lesssim  \nu \jb{T}^{\theta-2} \EE[U][T]{1,0}
+ \int_0^T \jb{t}^{\theta-2} \EE [U]{1,1} dt  \\
+  \int_0^T \jb{t}^{\theta} \left[
  2 \nu  \ltip{\zeta  \nabla \cdot G}{\zeta  \nabla \cdot H} \right.  \\
 + \left.   \ltns{\zeta Q} + \ltns{\zeta H} + \ltns{\zeta  h} \right] dt.
\end{multline}
\end{proof}

In the proof of Lemma \eqref{IntDecayLemma}, we used the following estimate:
\begin{lemma}
\label{Div2Grad}
If $G\in H^1(\rr^3,\rr^{3\times3})$ and
\begin{equation} \label{DerIndxCommt}
\partial_k G_{ij} - \partial_j G_{ik} = Q_{ijk}
\end{equation}
for $i,j,k = 1,2,3$ and $\ltn{Q} < \infty$,
then
\begin{equation}
\frac12 \ltns{\zeta \nabla G} - \ltns{\zeta \nabla \cdot G}
\lesssim  \jb{t}^{-2} \ltns{ G} +  \ltns{\zeta Q}.
\end{equation}
\end{lemma}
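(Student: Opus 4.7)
The strategy is to exploit the constraint \eqref{DerIndxCommt}, which makes the antisymmetric part of $\nabla G$ explicit in terms of the forcing $Q$. The plan is to rewrite $|\nabla G|^2$ in a form amenable to integration by parts, producing $|\nabla\cdot G|^2$ as the main quadratic form. Specifically, using \eqref{DerIndxCommt}, I would begin with
\begin{equation*}
|\nabla G|^2 = \sum_{i,j,k} (\partial_k G_{ij})^2 = \sum_{i,j,k} \partial_k G_{ij}\,\partial_j G_{ik} + \sum_{i,j,k} \partial_k G_{ij}\, Q_{ijk},
\end{equation*}
multiply by $\zeta^2$, and integrate. The $Q$ remainder is handled immediately by Cauchy--Schwarz and Young's inequality: $\int \zeta^2 \partial_k G_{ij}\, Q_{ijk}\,dx \le \tfrac18 \|\zeta \nabla G\|^2 + C\|\zeta Q\|^2$.

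For the main cross term $\int \zeta^2 \partial_k G_{ij}\,\partial_j G_{ik}\,dx$, I would integrate by parts twice, first in $x_k$ and then in $x_j$, so that both derivatives land back on $G$ in the divergence configuration. This yields exactly $\|\zeta \nabla \cdot G\|^2$ together with two commutator terms of the schematic form $\int \zeta(\partial\zeta)\,G\,\nabla G\,dx$. Using $|\partial \zeta| \lesssim \jb{t}^{-1}$ from \eqref{WeightDer} and Young's inequality, each commutator is bounded by $\jb{t}^{-1}\|G\|_{L^2}\|\zeta \nabla G\|_{L^2} \le \tfrac18 \|\zeta \nabla G\|^2 + C\jb{t}^{-2} \|G\|^2$.

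Combining these pieces gives $\|\zeta \nabla G\|^2 \le \|\zeta \nabla \cdot G\|^2 + \tfrac38 \|\zeta \nabla G\|^2 + C\jb{t}^{-2}\|G\|^2 + C\|\zeta Q\|^2$, and absorbing the gradient term onto the left yields $\tfrac58\|\zeta \nabla G\|^2 \le \|\zeta \nabla\cdot G\|^2 + C\jb{t}^{-2}\|G\|^2 + C\|\zeta Q\|^2$, which is stronger than the claimed inequality.

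The one technical point I anticipate is that the double integration by parts formally involves second derivatives of $G$, while only $G \in H^1$ is assumed; this is routine to justify by mollifying $G$ and passing to the limit, since the final estimate depends only on $\|G\|_{L^2}$, $\|\zeta\nabla G\|_{L^2}$, and $\|\zeta Q\|_{L^2}$. Conceptually, the essential input is the constraint \eqref{DerIndxCommt}: without it the antisymmetric part of $\nabla G$ is not controlled by $\nabla\cdot G$, and no inequality of this shape could hold.
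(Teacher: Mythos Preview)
Your proposal is correct and follows essentially the same route as the paper: both use the constraint \eqref{DerIndxCommt} to replace $\partial_k G_{ij}$ by $\partial_j G_{ik}+Q_{ijk}$, then integrate by parts twice (with the weight $\zeta^2$) to convert the cross term into $\ltns{\zeta\nabla\cdot G}$, controlling the commutators via $|\nabla\zeta^2|\lesssim\zeta\jb{t}^{-1}$ and Young's inequality. The only cosmetic difference is that the paper first derives the pointwise identity $|\nabla G|^2-|\nabla\cdot G|^2=\partial_j(\partial_j G_{ik}G_{ik})-\partial_k(\partial_j G_{ij}G_{ik})+\partial_j Q_{ijk}G_{ik}$ and then integrates against $\zeta^2$, which leaves a $\partial_j Q$ term requiring one more integration by parts; your ordering places $Q$ undifferentiated from the outset and is slightly cleaner.
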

\begin{proof}
The constraint \eqref{DerIndxCommt} implies that
\begin{align}
- |\nabla \cdot G|^2 &= - \partial_j G_{ij}\partial_k G_{ik} \\
& = - \partial_k (\partial_j G_{ij} G_{ik}) + \partial_j \partial_k G_{ij} G_{ik} \\
& = - \partial_k (\partial_j G_{ij} G_{ik}) + \partial_j (\partial_j G_{ik} + Q_{ijk}) G_{ik} \\
& = - \partial_k (\partial_j G_{ij} G_{ik}) + \partial_j \partial_j G_{ik} G_{ik} + \partial_j Q_{ijk} G_{ik} \\
& = - \partial_k (\partial_j G_{ij} G_{ik}) + \partial_j (\partial_j G_{ik} G_{ik})
       - \partial_j G_{ik} \partial_j G_{ik} + \partial_j Q_{ijk} G_{ik}\\
&  = - \partial_k (\partial_j G_{ij} G_{ik}) + \partial_j (\partial_j G_{ik} G_{ik})
     - |\nabla G|^2 + \partial_j Q_{ijk} G_{ik}.
\end{align}
Therefore, we have
\begin{equation}
|\nabla G|^2 - |\nabla \cdot G|^2
\;=\; \partial_j (\partial_j G_{ik} G_{ik}) - \partial_k (\partial_j G_{ij} G_{ik}) + \partial_j Q_{ijk} G_{ik}.
\end{equation}
We next multiply by $\zeta^2$ and integrate
\begin{align}
\ltns{\zeta &\nabla G} - \ltns{\zeta \nabla \cdot G} \\
& = \int_{\rr^3} \zeta^2 \left[  \partial_j (\partial_j G_{ik} G_{ik}) - \partial_k (\partial_j G_{ij} G_{ik}) \right] dx
   \;+\; \int_{\rr^3} \zeta^2 \partial_j Q_{ijk} G_{ik} dx \\
& \lesssim   \int_{\rr^3} \zeta \jb{t}^{-1} |\nabla G| |G| dx
   \;+\;   \int_{\rr^3} \zeta \jb{t}^{-1} |Q_{ijk}| |G_{ik}| dx
      \;-\; \int_{\rr^3} \zeta^2 Q_{ijk} \partial_j  G_{ik} dx \\
& \leq \frac12 \ltns{\zeta \nabla G} + C \jb{t}^{-2} \ltns{ G} + C \ltns{\zeta Q},
\end{align}
where we have used Young's inequality and  $|\nabla \zeta^2| \lesssim \zeta \jb{t}^{-1} $. The statement
of the lemma follows immediately from this inequality.
\end{proof}

We now establish a higher order version of Lemma \ref{IntDecayLemma}.
Define
\begin{equation}
\label{LocEn2Def}
\ZZI{p,q}=\isum{p-1} \ltns{ \zeta \Delta S^k \Gamma^a v(t) },\quad q<p.
\end{equation}
\begin{prop}\label{IntLinEstHighOrd}
Assume that $\sigma$ in \eqref{CutOffs} is sufficiently small and that $\nu\le1$.
Fix $0\le q<p$.  Suppose that
\begin{equation}
S^k\Gamma^a H,\;  \nabla \cdot S^k\Gamma^a H,\; 
S^k \Gamma^aQ,\;  S^k \Gamma^a h  \in L^2([0,T];X^{p-k-1,0}),\; k=0,\ldots,q,
\end{equation}
for some $ 0<T<\infty$.  If $U=(G,v)$ is a solution of \eqref{Pde1Lin},\eqref{Pde2Lin},\eqref{Constr3Lin} such that
\begin{equation}
\sup_{0\le t\le T}\EE{p,q+1}<\infty,
\end{equation}
then for any  $0\le\theta\le1$,
\begin{multline}
\int_0^T\jb{t}^\theta\left[\YYI{p,q}+\nu^2\ZZI{p,q}\right]dt\\
\lesssim
\nu \jb{T}^{\theta-2}\EE[u][T]{p,q}
+\int_0^T\jb{t}^{\theta-2}\EE{p,q+1}dt\\+
\isum{p-1}\left\{
\int_0^T\jb{t}^\theta [
2 \nu  \ltip{\zeta  \nabla \cdot S^k \Gamma^a G}{\zeta  \nabla \cdot (S+1)^k \Gamma^aH} \right.   \\
\left.    + \ltns{\zeta (S+1)^k \Gamma^a Q} + \ltns{\zeta (S+1)^k \Gamma^aH} + \ltns{\zeta  (S+1)^k \Gamma^ah} ]dt
\vphantom{\int_0^T}\right\}.
\end{multline}
\end{prop}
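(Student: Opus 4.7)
The plan is to reduce to the base case, Lemma \ref{IntDecayLemma}, by applying it to $\widetilde U:=S^k\Gamma^a U$ for each admissible pair $(k,a)$ with $|a|+k\le p-1$ and $k\le q$, and then summing. By the commutation identity \eqref{LinCommut} together with the constraint analogue \eqref{ConstrVectrFld3}, the variable $\widetilde U$ satisfies a system of the same form as \eqref{Pde1Lin}--\eqref{Constr3Lin} with modified data
\begin{align*}
\widetilde H & = (S+1)^k\Gamma^a H,\\
\widetilde h & = (S+1)^k\Gamma^a h + \nu\sum_{j=0}^{k-1}(-1)^{k-j}\binom{k}{j}\Delta S^j\Gamma^a v,\\
\widetilde Q & = (S+1)^k\Gamma^a Q,
\end{align*}
where $\omt$ acts on $Q$ via the convention \eqref{QConv}.

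Applying Lemma \ref{IntDecayLemma} to $\widetilde U$ and summing over admissible $(k,a)$, the left-hand side assembles exactly into $\int_0^T\jb{t}^\theta[\YYI{p,q}+\nu^2\ZZI{p,q}]\,dt$ by \eqref{LocEnDef} and \eqref{LocEn2Def}. On the right, after commuting one extra $\Gamma$ and at most one extra $S$ past $S^k\Gamma^a$ using \eqref{scalarCommut}, one has $\EE[\widetilde U][T]{1,0}\lesssim\EE[U][T]{|a|+k+1,k}$ and $\EE[\widetilde U]{1,1}\lesssim\EE[U]{|a|+k+1,k+1}$, which sum to the desired $\nu\jb{T}^{\theta-2}\EE[U][T]{p,q}$ and $\int_0^T\jb{t}^{\theta-2}\EE{p,q+1}\,dt$. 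The four inhomogeneity contributions $\ltns{\zeta\widetilde H}$, $\ltns{\zeta\widetilde Q}$, $\ltns{\zeta\widetilde h}$, and $2\nu\ltip{\zeta\nabla\cdot S^k\Gamma^a G}{\zeta\nabla\cdot\widetilde H}$ reproduce the four sums displayed in the statement, save for one residual $\nu^2\sum_{j<k}\ltns{\zeta\Delta S^j\Gamma^a v}$ issuing from the dissipative commutator inside $\widetilde h$.

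I would dispatch this residual by induction on $q$. The base case $q=0$ forces $k=0$, so the commutator sum vanishes, and the claim follows by summing Lemma \ref{IntDecayLemma} over $|a|\le p-1$. In the inductive step, the residual, once integrated against $\jb{t}^\theta$ and summed in $(k,a)$, is bounded by $\int_0^T\jb{t}^\theta\nu^2\ZZI{p,q-1}\,dt$, which by the inductive hypothesis is already controlled by data of exactly the form stated. The main obstacle I foresee is bookkeeping: one must check that the rotation-commutator terms generated in $\widetilde Q$ through \eqref{QConv} remain of admissible order so that $\ltns{\zeta\widetilde Q}$ is indeed controlled by $\ltns{\zeta(S+1)^k\Gamma^a Q}$, and that the commutators $[\Gamma,S^k]$ and $[\nabla,S^k]$ required to relate $\EE[\widetilde U]{1,1}$ back to $\EE{p,q+1}$ do not inflate the scaling index beyond $q+1$ or the total derivative count beyond $p$.
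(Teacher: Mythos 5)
Your proposal is correct and follows essentially the same route as the paper: apply $S^k\Gamma^a$ to the linearized system, invoke Lemma \ref{IntDecayLemma} for each admissible $(k,a)$, and absorb the dissipative commutator residual $\nu^2\sum_{j<k}\ltns{\zeta\Delta S^j\Gamma^a v}\lesssim\nu^2\ZZI{p,q-1}$ by induction on $q$, with the base case $q=0$ having an empty commutator sum. The bookkeeping concerns you raise are handled exactly as you anticipate (the sign discrepancy in your $\widetilde h$ versus the paper's $\td h_0$ is immaterial since only its $L^2$ norm enters).
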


\begin{proof}
The proof proceeds by induction on $q$.
Applying the vector fields $S^m \Gamma^a$, $m\le q$, $m+|a|\le p-1$, 
 to \eqref{Pde1Lin},\eqref{Pde2Lin},\eqref{Constr3Lin}
and using the commutation properties \eqref{LinCommut},\eqref{NonLinCommut},\eqref{ConstrVectrFld3},
we obtain the PDEs
%
\begin{align}
\label{TildePde1}
& \partial_t \td{G} - \nabla \td{v} = \td{H} \\
\label{TildePde2}
& \partial_t \td{v} - \nabla \cdot \td{G} - \nu \Delta \td{v} =  \td{h_0}\\
\label{TildeConstr3}
&\partial_k \td{G}_{ij} - \partial_j \td{G}_{ik}   =  \td{Q}_{ijk}.
\end{align}
%
where we have used the notation
\begin{equation} \label{TildeNotation}
\begin{aligned}
& \td{G} = S^m \Gamma^a G\\
&\td{v} = S^m \Gamma^a v \\
& \td{H} =(S+1)^m \Gamma^a H\\
& \td{Q}_{ijk} =(S+1)^m \Gamma^a Q_{ijk} \\
& \td{h}_0 = (S+1)^m \Gamma^a h - \sum_{j=0}^{m-1} (-1)^{m-j} {m \choose j} \nu B \Delta S^j \Gamma^a U.
\end{aligned}
\end{equation}

Fix $m=0$, and note that $\td{h}_0=\Gamma^a h$ (i.e. the sum is empty).  Apply Lemma \ref{IntDecayLemma} to 
\eqref{TildePde1}, \eqref{TildePde2},\eqref{TildeConstr3}
 and sum over $|a| \leq p-1$.  This directly yields the result when $q=0$.

Next, assume that the result holds for $q=q'-1$, with $q'<p$.  Take $m\le q'$, $m+|a|\le p-1$,
and note that
\[
\ltns{\zeta \td{h}_0}\lesssim \ltns{\zeta (S+1)^m \Gamma^a h}+\nu^2\ZZI{p,q'-1}.
\]
By the induction hypothesis, the quantity
\[
\nu^2\int_0^T\jb{t}^\theta\ZZI{p,q'-1} dt
\]
 has the desired bound,
so the result for $q=q'$ follows by summation over $m\le q'$, $m+|a|\le p-1$.

\end{proof}

\section{Local Energy Decay}
\label{DecayEstimates}
In this section we establish localized energy decay estimates for the nonlinear equation using a bootstrap argument
and an application of Proposition \ref{IntLinEstHighOrd}.
We remind the reader that the quantities $\YYI{p,q}$ and $\ZZI{p,q}$ were defined in \eqref{LocEnDef} and
\eqref{LocEn2Def}, respectively.

\begin{theorem}
\label{IntDecayNon}
Choose $(p,q)$ so that $p^\ast=\left[\frac{p+5}2\right]<q\le p$.
Suppose that $U=(G,v) \in C([0,T); X^{p,q})$ is a solution of \eqref{Pde1},\eqref{Pde2},\eqref{PressureFormula} with
\begin{equation}
\sup_{0\le t\le T}\EE{p,q}<\infty,
\end{equation}
and
\begin{equation}
\label{IntDecaySmallness}
\sup_{0\le t\le T} \EE{p^\ast,p^\ast}\le{\eps^2},
\end{equation}
for some $\eps$ sufficiently small.
Then
%
%
\begin{multline}
\label{IntDecayEst1}
\int_0^T\jb{t}^\theta\left[\YYI{p^\ast ,p^\ast -1}+\nu^2\ZZI{p^\ast ,p^\ast -1}\right]dt\\
\ \\
\lesssim
\begin{cases}
\sup\limits_{0\le t\le T} \jb{t}^{-\gamma}\EE{p^\ast ,p^\ast },&0<\theta+{\gamma}<1\\
\ \\
\log (e+T)\sup\limits_{0\le t\le T} \EE{p^\ast ,p^\ast },&\theta=1
\end{cases},
\end{multline}
and
\begin{multline}
\label{intdecayest2}
\int_0^T\jb{t}^\theta\left[\YYI{p^\ast +1,p^\ast }+\nu^2\ZZI{p^\ast +1,p^\ast }\right]dt\\
\lesssim \sup_{0\le t\le T}\jb{t}^{-{\gamma}}\EE{p,q},\quad 0<\theta+{\gamma}<1.
\end{multline}
%
%
\end{theorem}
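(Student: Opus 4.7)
The strategy is to treat the nonlinear system as a linearized system driven by quadratic sources, apply Proposition \ref{IntLinEstHighOrd}, and then absorb the source terms using the smallness hypothesis \eqref{IntDecaySmallness} together with the calculus inequalities of Section \ref{BasicIneq}.  Writing \eqref{Pde1},\eqref{Pde2},\eqref{Constr3} in the form \eqref{Pde1Lin},\eqref{Pde2Lin},\eqref{Constr3Lin} with $H=N_1$, $h=N_2-\nabla\pi$, $Q=Q(G,\nabla G)$, Proposition \ref{IntLinEstHighOrd} applies with $(p,q)\mapsto(p^\ast,p^\ast-1)$ for \eqref{IntDecayEst1} and with $(p,q)\mapsto(p^\ast+1,p^\ast)$ for \eqref{intdecayest2}.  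The first two terms on the right of that proposition depend only on $\EE{\cdot,\cdot}$ and are absorbed into the right-hand sides of \eqref{IntDecayEst1}--\eqref{intdecayest2} by elementary time integration, with the logarithmic factor at $\theta=1$ arising from $\int_0^T\jb{t}^{-1}\,dt\lesssim\log(e+T)$.  The real work is to bound the quadratic source terms in weighted $L^2$.

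Expand each $(S+1)^k\Gamma^a N_j$ by the Leibniz identity \eqref{NonLinCommut} into a finite sum of bilinear products of $S^{k_i}\Gamma^{a_i}U$ and $\nabla S^{k_j}\Gamma^{a_j}U$, and split the $\zeta$-weighted norms using $1\le\zeta+\eta$ from \eqref{Unity}.  The interior part falls directly under Lemma \ref{IntCalcIneq}, producing bounds of the form
\[
\bigl(\YYIH{[(\bar p+5)/2],[\bar p/2]} + \jb{t}^{-1}\EEH{[(\bar p+3)/2],[\bar p/2]}\bigr)\,\EEH{\bar p+1,\bar q},
\]
with $\bar p\in\{p^\ast,p^\ast+1\}$.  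The hypotheses $p\ge 11$ and $q>p^\ast$ ensure that the small-index factor is controlled by $\EE{p^\ast,p^\ast}^{1/2}\le\eps$, while the large-index factor is controlled by $\EE{p,q}^{1/2}$ (or again by $\eps$ in the case of \eqref{IntDecayEst1}).  After squaring and integrating in time, the resulting $\YYI{\bar p,\bar q}$ contribution on the right is absorbed into the left-hand side, and the $\jb{t}^{-2}\EE{\cdot,\cdot}$ contribution integrates against $\jb{t}^\theta$ to produce the claimed decay.  The constraint source $Q$ is itself quadratic in $(G,\nabla G)$ and is handled identically; the pressure reduces, via Lemma \ref{PressLemma} and $L^2$-boundedness of the Riesz transforms, to an estimate on $S^k\Gamma^a(GG^\top-v\otimes v)$ of the same shape; and the mixed term in Proposition \ref{IntLinEstHighOrd} is closed by Cauchy--Schwarz, distributing one factor into the left-hand side (using $\nu\le 1$) and leaving the other as a standard source.

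The main obstacle is the exterior top-order estimate for $N_2=\nabla\cdot(GG^\top)-v\cdot\nabla v$, where Lemma \ref{ExtCalcIneq} alone supplies only a single factor $\jb{t}^{-1/2}$ and cannot close a $\jb{t}^\theta$-weighted integral for $\theta$ near $1$.  Here the null structure intervenes.  Using \eqref{GradDecomp}, rewrite each $\nabla$ appearing in $N_2$ as $\omega\partial_r - (\omega/r)\wedge\Omega$: the angular piece trades a derivative for a rotational vector field together with the weight $1/r\lesssim\jb{t}^{-1}$ available on $\mathrm{supp}\,\eta$, while the radial piece forces the appearance of the contractions $\omega\cdot v$ and $(G^\top\omega)_j = \omega\cdot G_{,j}$.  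Proposition \ref{SpecialSob} supplies precisely for these contractions the extra pointwise gain $\jb{t}^{-3/2}\EEH{2,0}$, which together with Lemma \ref{ExtCalcIneq} closes the exterior estimate.  Summing the interior and exterior contributions gives an inequality of the schematic form
\[
\int_0^T\jb{t}^\theta\bigl[\YYI{\bar p,\bar q}+\nu^2\ZZI{\bar p,\bar q}\bigr]dt \le C\eps\int_0^T\jb{t}^\theta\YYI{\bar p,\bar q}\,dt + R,
\]
where $R$ denotes the desired right-hand side of \eqref{IntDecayEst1} or \eqref{intdecayest2}; taking $\eps$ small absorbs the first term on the right and completes the bootstrap.
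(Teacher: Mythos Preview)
Your architecture is correct, but you have misplaced the difficulty.  The exterior source terms are \emph{not} an obstacle in this theorem, and Proposition~\ref{SpecialSob} is not used here.  The sources in Proposition~\ref{IntLinEstHighOrd} appear \emph{squared}, so after dropping $\zeta$ (forced by the pressure, since Riesz transforms do not localize), Lemma~\ref{ExtCalcIneq} yields a factor $\jb{t}^{-1}$ for the norm and hence $\jb{t}^{-2}$ for the square --- not $\jb{t}^{-1/2}$ --- and this integrates against $\jb{t}^\theta$ for all $\theta\le1$.  The paper simply bounds all of $H$, $h$, $Q$ and $\nabla\pi$ by the single quantity $R_{\bar p,\bar q}^2$ of \eqref{Rdef} and estimates it in \eqref{Ineq4} using only Lemmas~\ref{IntCalcIneq} and~\ref{ExtCalcIneq}.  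The null structure is reserved for the Low Energy section, where the integrand is trilinear, $\ltip{N}{S^k\Gamma^aU}$, and an extra half-power of decay is genuinely required.

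The step you do gloss over is the mixed term $2\nu\ltip{\zeta\nabla\cdot\td G}{\zeta\nabla\cdot\td H}$, and ``Cauchy--Schwarz, leaving the other as a standard source'' does not close it.  The point is that $\nabla\cdot\td H$ carries one more derivative than $\td H$: its top-order piece $\nabla\cdot(\nabla\td v\,G)$ contains $\nabla^2\td v$, and $\ltns{\zeta\nabla^2\td v}$ is controlled neither by $\YYI{\bar p,\bar q}$ (one derivative only) nor directly by $\ZZI{\bar p,\bar q}$ (Laplacian only).  The paper first isolates the top-order contribution to $\nabla\cdot\td H$ (the remainder going into $I_3$ and then into $R_{\bar p,\bar q}$), then proves the coercivity inequality $\ltns{\zeta\nabla^2\td v}\lesssim\jb{t}^{-2}\EE{\bar p,\bar q}+\ZZI{\bar p,\bar q}$ by integration by parts (see \eqref{DETemp4}--\eqref{DETemp6}); the companion piece $\nabla\cdot(v\cdot\nabla\td G)$ is handled by a separate integration by parts using $\nabla\cdot v=0$.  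Only after this does the $\eps$-absorption go through.  Finally, note that \eqref{intdecayest2} is obtained sequentially: one specializes the master inequality \eqref{Ineq5} to $(\bar p,\bar q)=(p^\ast+1,p^\ast)$ and feeds the already-proved \eqref{IntDecayEst1} back in to control the $\YYI{p^\ast,p^\ast-1}$ term on the right.
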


\begin{proof}
Assume that the pair $(p,q)$ satisfies the hypotheses.  Then 
\[
p>p^\ast\ge5.
\] 
Choose any pair $(\pbar,\qbar)$ with
\begin{equation}
\label{bpbqh1}
\qbar<\pbar\le p,\quad \qbar<q.
\end{equation}
%
An application of Proposition \ref{IntLinEstHighOrd} yields
\begin{multline} \label{Ineq1}
\int_0^T\jb{t}^\theta\left[\YYI{\pbar ,\qbar}+\nu^2\ZZI{\pbar ,\qbar}\right]dt\\
\lesssim
\nu \jb{T}^{\theta-2}\EE[U][T]{\pbar ,\qbar}
+\int_0^T\jb{t}^{\theta-2}\EE{\pbar ,\qbar+1}dt+I+J,
\end{multline}
where
\begin{equation}
\label{Ineq1a}
I=\isum[\qbar]{\pbar -1}
\int_0^T \jb{t}^\theta 
2 \nu  \ltip{\zeta  \nabla \cdot S^k \Gamma^a G}{\zeta  \nabla \cdot (S+1)^k \Gamma^aH} dt,
\end{equation}
and
\begin{multline}
\label{Ineq1b}
J=\isum[\qbar]{\pbar -1}
\int_0^T \jb{t}^\theta 
\Big[ \ltns{\zeta (S+1)^k \Gamma^a Q} \\
+ \ltns{\zeta (S+1)^k \Gamma^aH} + \ltns{\zeta  (S+1)^k \Gamma^ah} \Big]dt,
\end{multline}
with the following inhomogeneous  terms:
\begin{align}
& H = \nabla v G - v \cdot \nabla G \\
\label{QuadDef}
& h = \nabla \cdot (GG^T) - v \cdot \nabla v - \nabla \pi \equiv h_1-\nabla\pi\\
& Q_{ij\ell} = G_{mj} \partial_m G_{i\ell} - G_{m\ell} \partial_m G_{ij}.
\end{align}
%

It follows from \eqref{NonLinCommut} and \eqref{ConstrVectrFld3} that
\begin{align}
\ltns{\zeta (S+1)^k \Gamma^aH} &+\ltns{\zeta (S+1)^k \Gamma^a h_1 } + \ltns{\zeta (S+1)^k \Gamma^a Q_{ij\ell}}\\
&\lesssim 
\sum_{\stacktwo{a_1+a_2=a}{k_1+k_2= k} }
\ltns{\zeta |S^{k_1}\Gamma^{a_1}U|\;|\nabla S^{k_2}\Gamma^{a_2}U|}\\
&\lesssim 
\sum_{\stacktwo{a_1+a_2=a}{k_1+k_2= k} }
\ltns{ |S^{k_1}\Gamma^{a_1}U|\;|\nabla S^{k_2}\Gamma^{a_2}U|},
\end{align}
since $|\zeta|\le1$.
For the pressure term, we have by \eqref{PressureCommut} and Lemma \ref{PressLemma},
\begin{align}
\ltns{\zeta (S+1)^k \Gamma^a\nabla\pi}
&\le\ltns{ (S+1)^k \Gamma^a\nabla\pi}\\
&=\ltns{\nabla S^k \Gamma^a\pi}\\
&=\ltns{\nabla\Delta^{-1}\partial_i\partial_jS^k \Gamma^a\left[(G G^T)_{ij} - v_iv_j \right]}\\
&\lesssim\sum_{\stacktwo{a_1+a_2=a}{k_1+k_2= k} }
\ltns{ |S^{k_1}\Gamma^{a_1}U|\;|\nabla S^{k_2}\Gamma^{a_2}U|},
\end{align}
since the operators $\Delta^{-1}\partial_i\partial_j$ are bounded on $L^2$.
From \eqref{Ineq1b}, this proves that 
\begin{equation}
\label{Jest}
J\lesssim \int_0^T \jb{t}^\theta R_{\pbar ,\qbar}^2dt,
\end{equation}
with
\begin{equation}
\label{Rdef}
R_{\pbar ,\qbar}^2\equiv \sum_{\stackthree{k_1+k_2+|a_1|+|a_2|\le\pbar }{k_1+k_2\le\qbar}{k_2+|a_2|<\pbar }}
 \ltns{ |S^{k_1}\Gamma^{a_1}U|\;|\nabla S^{k_2}\Gamma^{a_2}U|}.
\end{equation}

%
Next, we turn to the estimation of the integral $I$, defined in \eqref{Ineq1a}.
According to  \eqref{QuadDef} \and \eqref{NonLinCommut}, we have, adopting the notation  \eqref{TildeNotation}
\begin{multline}
\left|\nabla\cdot (S+1)^k \Gamma^a H -\nabla\cdot[\nabla\td{v}G-v\cdot\nabla\td{G}]\right|\\
\lesssim 
\sum_{\stackthree{a_1+a_2=a}{k_1+k_2= k} {k_2+|a_2|<k+|a|}}
{ \nabla\cdot[\nabla S^{k_2}\Gamma^{a_2}\td{v}\;S^{k_1}\Gamma^{a_1}G
-S^{k_1}\Gamma^{a_1}v\cdot\nabla \;S^{k_2}\Gamma^{a_2}\td{G}]}.
\end{multline}
It follows that
\begin{equation}
\label{Imain}
|I|\lesssim |I_1|+|I_2|+I_3,
\end{equation}
with
\begin{align}\label{IntDef1}
& I_1 =
\int_0^T\jb{t}^\theta  \nu  \ltip{\zeta  \nabla \cdot \td{G}}{\zeta  \nabla \cdot (\nabla \td{v} G) } dt  \\
& I_2 =
\int_0^T\jb{t}^\theta  \nu  \ltip{\zeta  \nabla \cdot \td{G}}{\zeta  \nabla \cdot (v \cdot \nabla \td{G}) } dt,
\end{align}
and
\begin{align}\label{IntDef2}
&I_3=
 \int_0^T  \jb{t}^\theta  \nu\; R_{\pbar ,\qbar}\;\YYIH{\pbar ,\qbar}\;dt.
 \end{align}

Let us first consider $I_1$.  Using the standard Sobolev inequality, we have
\[
\ltn[L^\infty]{U}+\ltn[L^\infty]{\nabla U}\lesssim\EEH{3,0}\le\EEH{p^\ast,p^\ast} < \eps,
\]
since $p^\ast\ge5>3$.  It follows that
\begin{align} \label{DETemp1}
 \nu  &|\ltip{\zeta  \nabla \cdot \td{G}}{\zeta  \nabla \cdot (\nabla \td{v} G) }|\\
&\lesssim \nu\ltn{\zeta\nabla\widetilde U}\Big(  \ltn[L^\infty]{\nabla U}\ltn{\zeta\nabla\td v}
+\ltn[L^\infty]{ U}\ltn{\zeta\nabla^2\td v} \Big)\\
&\lesssim\nu\eps\; \YYIH{\pbar ,\qbar}\Big( \YYIH{\pbar ,\qbar} + \ltn{\zeta\nabla^2\td v} \Big)\\
&\lesssim\eps \Big(\YYI{\pbar ,\qbar}+\nu^2\ltns{\zeta\nabla^2\td v}\Big).
\end{align}

For the final term in \eqref{DETemp1}, we derive a coercivity inequality.  Using integration
by parts, we have
\begin{align} \label{DETemp4}
 \ltns{\zeta \nabla^2 \td{v}}
&=  \sum_{k,m} \int_{\rr^3} \zeta^2  (\partial_k \partial_m \td{v}^i) ( \partial_k \partial_m \td{v}^i) dx \\
& = -\sum_{k,m}  \int_{\rr^3} (\partial_k \zeta^2)   (\partial_m \td{v}^i) (\partial_k \partial_m \td{v}^i) dx \\
& \hspace{1cm}   +   \sum_{k,m} \int_{\rr^3}  (\partial_m \zeta^2)   (\partial_m \td{v}^i) (\Delta   \td{v}^i) dx \\
& \hspace{1cm}   +   \sum_{k,m} \int_{\rr^3}   \zeta^2   |\Delta \td{v}|^2 dx.
\end{align}
Using \eqref{WeightDer}, we obtain $|\nabla \zeta^2| \lesssim \zeta \jb{t}^{-1}$, and so \eqref{DETemp4} yields
\begin{align} \label{DETemp5}
 \ltns{\zeta \nabla^2 \td{v}}
& \lesssim \jb{t}^{-1}\ltn{\nabla \td v}\ltn{\zeta\nabla^2\td v}+\ltns{\zeta\Delta\td v}\\
&\lesssim M^{-1}\ltns{\zeta\nabla^2\td v}+M\jb{t}^{-2}\ltns{\nabla \td v}+\ltns{\zeta\Delta\td v},
\end{align}
for any $M>0$.  Choosing $M$ sufficiently large, we obtain the bound
\begin{align}
\label{DETemp6}
\ltns{\zeta \nabla^2 \td{v}}&\lesssim \jb{t}^{-2}\ltns{\nabla \td v}+\ltns{\zeta\Delta\td v}\\
&\le \jb{t}^{-2} \EE{\pbar ,\qbar} + \ZZI{\pbar ,\qbar}.
\end{align}

Altogether, from \eqref{DETemp1} and \eqref{DETemp6}, we conclude that
\begin{equation}
\label{I1est}
|I_1|\lesssim\eps\Big(\jb{t}^{-2}\EE{\pbar ,\qbar}+\YYI{\pbar ,\qbar}+\nu^2\ZZI{\pbar ,\qbar}\Big).
\end{equation}

To estimate  $I_2$, we write
\begin{align}\label{DETemp7}
&  |\ltip{\zeta  \nabla \cdot \td{G}}{\zeta  \nabla \cdot (v \cdot \nabla \td{G}) }|\\
&\qquad \;=\;  \left|\int_{\rr^3} \zeta^2 (\nabla \cdot \td{G})_i \partial_k (v_m \partial_m \td{G}_{ik}) dx\right|\\
&\qquad \;\le\;  \left|\int_{\rr^3} \zeta^2 (\nabla \cdot \td{G})_i(\partial_k v_m) ( \partial_m \td{G}_{ik})dx\right|\\
&\qquad\qquad +\left|\int_{\rr^3} \zeta^2(\nabla \cdot \td{G})_i(v\cdot \nabla) (\nabla \cdot \td{G})_i dx\right|.
\end{align}
The first term is easily estimated by
\begin{align}
\left|\int_{\rr^3} \zeta^2 (\nabla \cdot \td{G})_i(\partial_k v_m) ( \partial_m \td{G}_{ik})dx\right|
&\le \ltn[L^\infty]{\nabla U}\ltns{\zeta\nabla\td U}\\
&\lesssim\eps\YYI{\pbar ,\qbar}.
\end{align}
For the second term, we use integration by parts and the fact that $\nabla\cdot v=0$
\begin{align}
&\left|\int_{\rr^3} \zeta^2(\nabla \cdot \td{G})_i(v\cdot \nabla) (\nabla \cdot \td{G})_i dx\right|\\
&\qquad=\left|\tfrac12\int_{\rr^3} \zeta^2(v\cdot \nabla)|\nabla \cdot \td{G}|^2dx\right|\\
&\qquad=\left|-\tfrac12\int_{\rr^3} [(v\cdot \nabla)\zeta^2]|\nabla \cdot \td{G}|^2dx\right|\\
&\qquad\lesssim\jb{t}^{-1}\int_{\rr^3} \zeta |U| |\nabla \td G|^2dx\\
&\qquad \lesssim \jb{t}^{-1} \ltn[L^\infty]{U}\ltn{\nabla\td G}\ltn{\zeta\nabla\td G}\\
&\qquad \lesssim \eps \jb{t}^{-1} \EEH{\pbar ,\qbar}\YYIH{\pbar ,\qbar}\\
&\qquad \lesssim \eps\Big(\jb{t}^{-2}\EE{\pbar ,\qbar}+\YYI{\pbar ,\qbar}\Big).
\end{align}
This shows that 
\begin{equation}
\label{I2est}
I_2\lesssim\eps\Big(\jb{t}^{-2}\EE{\pbar ,\qbar}+\YYI{\pbar ,\qbar}+\nu^2\ZZI{\pbar ,\qbar}\Big).
\end{equation}

Since $\nu\le1$, we have by Young's inequality, that
\begin{equation}
\label{I3est}
I_3\le M^{-1}\int_0^T\jb{t}^\theta\YYI{\pbar ,\qbar}\;dt+M\int_0^T  \jb{t}^\theta  R_{\pbar ,\qbar}^2\;dt.
\end{equation}

Combining \eqref{Imain}, \eqref{I3est}, \eqref{I1est}, and \eqref{I2est}, we conclude  that
\begin{multline}
\label{Iest}
I\lesssim \int_0^T \jb{t}^{\theta-2}\EE{\pbar ,\qbar}\\
+(\eps +M^{-1})\int_0^T\jb{t}^{\theta}\Big(\YYI{\pbar ,\qbar}+\nu^2\ZZI{\pbar ,\qbar}\Big) dt\\
+M\int_0^T\jb{t}^{\theta} R_{\pbar ,\qbar}^2dt.
\end{multline}

Upon assembling the estimates  \eqref{IntDef2},\eqref{Ineq1},\eqref{Jest},\eqref{Iest},  we obtain
\begin{multline} \label{Ineq2}
\int_0^T\jb{t}^\theta\left[\YYI{\pbar ,\qbar}+\nu^2\ZZI{\pbar ,\qbar}\right]dt\\
\lesssim
\nu \jb{T}^{\theta-2}\EE[U][T]{\pbar ,\qbar}
+\int_0^T\jb{t}^{\theta-2}\EE{\pbar ,\qbar+1}dt\\
+(\eps +M^{-1})\int_0^T\jb{t}^{\theta}\Big(\YYI{\pbar ,\qbar}+\nu^2\ZZI{\pbar ,\qbar}\Big) dt\\
+M\int_0^T\jb{t}^{\theta} R_{\pbar ,\qbar}^2dt.
\end{multline}
By taking $\eps$ sufficiently small and $M$ sufficiently large, the middle integral can be absorbed on the left,
resulting in
\begin{multline} 
\label{Ineq3}
\int_0^T\jb{t}^\theta\left[\YYI{\pbar ,\qbar}+\nu^2\ZZI{\pbar ,\qbar}\right]dt\\
\lesssim
\nu \jb{T}^{\theta-2}\EE[U][T]{\pbar ,\qbar}
+\int_0^T\jb{t}^{\theta-2}\EE{\pbar ,\qbar+1}dt
+\int_0^T\jb{t}^{\theta} R_{\pbar ,\qbar}^2dt.
\end{multline}
Recalling the definition \eqref{Rdef}
 and applying  Lemmas \ref{IntCalcIneq},\ref{ExtCalcIneq} we have that
\begin{equation}
\label{Ineq4}
R_{\pbar ,\qbar}^2\lesssim \Bigl(\YYI{\pbar ',\qbar'}+\jb{t}^{-2}\EE{\pbar ',\qbar'}\Bigr)\EE{\pbar ,\qbar},
\end{equation}
where
\[
\pbar '= \left[\frac{\pbar +5}2\right],\quad \qbar'=\left[\frac{\pbar }2\right].
\]
The estimates \eqref{Ineq3}, \eqref{Ineq4} imply that
\begin{multline} 
\label{Ineq5}
\int_0^T\jb{t}^\theta\left[\YYI{\pbar ,\qbar}+\nu^2\ZZI{\pbar ,\qbar}\right]dt\\
\lesssim
\nu \jb{T}^{\theta-2}\EE[U][T]{\pbar ,\qbar}
+\int_0^T\jb{t}^{\theta-2}\EE{\pbar ,\qbar+1}dt\\
+\int_0^T\jb{t}^{\theta} \Bigl(\YYI{\pbar ',\qbar'}+\jb{t}^{-2}\EE{\pbar ',\qbar'}\Bigr)\EE{\pbar ,\qbar}dt.
\end{multline}

Now choose $(\pbar ,\qbar)=(p^\ast,p^\ast-1)$, which is consistent with the requirement
\eqref{bpbqh1}.  Since $p^\ast\ge5$, there holds
\[
 \pbar '=\left[\frac{p^\ast+5}2\right] \le p^\ast, \quad \qbar'=\left[\frac{p^\ast}2\right]\le p^\ast-1.
 \]
In this case, our estimate \eqref{Ineq5} yields
\begin{multline} 
\label{Ineq6}
\int_0^T\jb{t}^\theta\left[\YYI{p^\ast,p^\ast-1}+\nu^2\ZZI{p^\ast,p^\ast-1}\right]dt\\
\lesssim
\nu \jb{T}^{\theta-2}\EE[U][T]{p^\ast,p^\ast}
+\int_0^T\jb{t}^{\theta-2}\EE{p^\ast,p^\ast}dt\\
+\int_0^T\jb{t}^{\theta} \Bigl(\YYI{p^\ast,p^\ast-1}+\jb{t}^{-2}\EE{p^\ast,p^\ast}\Bigr)\EE{p^\ast,p^\ast}dt.
\end{multline}
By the assumption \eqref{IntDecaySmallness}, we obtain 
\begin{multline} 
\label{Ineq7}
\int_0^T\jb{t}^\theta\left[\YYI{p^\ast,p^\ast-1}+\nu^2\ZZI{p^\ast,p^\ast-1}\right]dt\\
\lesssim
\nu \jb{T}^{\theta-2}\EE[U][T]{p^\ast,p^\ast}
+\int_0^T\jb{t}^{\theta-2}\EE{p^\ast,p^\ast}dt\\
+\eps\int_0^T\jb{t}^{\theta} \YYI{p^\ast,p^\ast-1}dt.
\end{multline}
The last integral can be absorbed on the left, if $\eps$ is small enough, and we obtain finally
\begin{multline} 
\label{Ineq8}
\int_0^T\jb{t}^\theta\left[\YYI{p^\ast,p^\ast-1}+\nu^2\ZZI{p^\ast,p^\ast-1}\right]dt\\
\lesssim
\nu \jb{T}^{\theta-2}\EE[U][T]{p^\ast,p^\ast}
+\int_0^T\jb{t}^{\theta-2}\EE{p^\ast,p^\ast}dt.
\end{multline}
 Choose $\gamma\ge0$ so that $0<\theta+\gamma\le1$.
From \eqref{Ineq8} we obtain
\begin{multline} 
\label{Ineq9}
\int_0^T\jb{t}^\theta\left[\YYI{p^\ast,p^\ast-1}+\nu^2\ZZI{p^\ast,p^\ast-1}\right]dt\\
\lesssim \sup_{0\le t\le T}\jb{t}^{-\gamma}\EE{p^\ast,p^\ast}
\left[\jb{T}^{\theta+\gamma-2}+\int_0^T\jb{t}^{\theta+\gamma-2}dt\right],
\end{multline}
from which the results \eqref{IntDecayEst1} follow directly.

Finally, going back to \eqref{Ineq5},  we 
take $(\pbar ,\qbar)=(p^\ast+1,p^\ast)$, which is also consistent with \eqref{bpbqh1}.
Note that
\[
\pbar =p^\ast+1\le p,\quad \qbar+1=p^\ast+1\le p,
\]
and
\[
\pbar '=\left[\frac{p^\ast+6}2\right]\le p^\ast,\quad \qbar'=\left[\frac{p^\ast+1}2\right]\le p^\ast-1,
\]
since $p^\ast\ge5$.  
We get from \eqref{Ineq5}
\begin{align} 
\label{Ineq10}
\int_0^T\jb{t}^\theta&\left[\YYI{p^\ast+1,p^\ast}+\nu^2\ZZI{p^\ast+1,p^\ast}\right]dt\\
\lesssim\;&
\nu \jb{T}^{\theta-2}\EE[U][T]{p,q}
+\int_0^T\jb{t}^{\theta-2}\EE{p,q}dt\\
&+\int_0^T\jb{t}^{\theta} \Bigl(\YYI{p^\ast,p^\ast-1}+\jb{t}^{-2}\EE{p^\ast,p^\ast}\Bigr)\EE{p,q}dt.
\end{align}
Let $\gamma\ge0$ with $0<\theta+\gamma<1$.  By  \eqref{IntDecayEst1} and \eqref{IntDecaySmallness}, 
this yields
\begin{align} 
\label{Ineq11}
\int_0^T\jb{t}^\theta&\left[\YYI{p^\ast+1,p^\ast}+\nu^2\ZZI{p^\ast+1,p^\ast}\right]dt\\
\lesssim\;&
\sup_{0\le t\le T}\jb{t}^{-\gamma}\EE{p,q}
\Bigg[ \jb{T}^{\theta+\gamma-2}
+\int_0^T\jb{t}^{\theta+\gamma-2}dt\\
&+\int_0^T\jb{t}^{\theta+\gamma}\YYI{p^\ast,p^\ast-1}dt\\
&+\sup_{0\le t\le T}\EE{p^\ast,p^\ast}\int_0^T\jb{t}^{\theta+\gamma-2}dt\Bigg]\\
\lesssim\;&\sup_{0\le t\le T}\jb{t}^{-\gamma}\EE{p,q}\;\left[1+\sup_{0\le t\le T}\EE{p^\ast,p^\ast}\right]\\
\lesssim\;&\sup_{0\le t\le T}\jb{t}^{-\gamma}\EE{p,q}\;[1+\eps^2]\\
\lesssim\;&\sup_{0\le t\le T}\jb{t}^{-\gamma}\EE{p,q}.
\end{align}
This completes the proof of \eqref{intdecayest2}.

\end{proof}

\section{Basic Energy Estimate}

\begin{lemma}
Suppose that $U=(G,v) \in C([0,T); X^{p,q})$ is a solution of \eqref{Pde1},\eqref{Pde2},\eqref{PressureFormula}.
Then
\begin{multline}\label{BEest}
\EE[U][T]{p,q} \;\lesssim\; \EEZ{p,q}\\
     \sum_{\stackthree{\stacktwo{a_1+a_2=a}{k_1+k_2= k}}{|a|+k\le p}{k\le q}}
\left|
   \int_0^T
      \ltip  {   N(  S^{k_1}\Gamma^{a_1} U(t), \nabla S^{k_2}\Gamma^{a_2} U(t)  )   }   {S^k \Gamma^a U(t)}dt
          \right|.
\end{multline}
\end{lemma}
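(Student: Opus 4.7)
The plan is to apply $S^k\Gamma^a$ to the system \eqref{PdeMatrx} and pair the result with $S^k\Gamma^a U$ in $L^2$ for each $(a,k)$ with $|a|+k\le p$, $k\le q$. By \eqref{LinCommut}, \eqref{NonLinCommut}, and \eqref{PressureCommut}, the commuted equation reads $L\,S^k\Gamma^a U = \mathcal N_{k,a} + (0,-\nabla S^k\Gamma^a\pi) + R_{k,a}$, where $\mathcal N_{k,a}$ is the combinatorial nonlinear sum from \eqref{NonLinCommut}, and
\begin{equation*}
R_{k,a} = \nu\sum_{j=0}^{k-1}(-1)^{k-j}{k \choose j}\,B\Delta S^j\Gamma^a U
\end{equation*}
is the remainder produced by the nontrivial commutation of $S$ with the dissipative term.

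The next step would be to exploit the symmetric-plus-dissipative structure of $L$. For any $W=(H,w)$ of sufficient regularity, integration by parts gives $\ltip{A(\nabla)W}{W}=0$ (since the principal part is antisymmetric) and $-\nu\,\ltip{B\Delta W}{W}=\nu\,\ltns{\nabla w}$. Pairing with $W=S^k\Gamma^a U$ and integrating in time therefore yields
\begin{equation*}
\tfrac12\ltns{S^k\Gamma^a U(T)} + \nu\!\int_0^T\!\ltns{\nabla S^k\Gamma^a v}\,dt
= \tfrac12\ltns{S^k\Gamma^a U(0)} + \int_0^T\!\ltip{\mathcal N_{k,a}}{S^k\Gamma^a U}\,dt + \int_0^T\!\ltip{R_{k,a}}{S^k\Gamma^a U}\,dt.
\end{equation*}
The pressure contribution drops out exactly: $\ltip{(0,-\nabla S^k\Gamma^a\pi)}{S^k\Gamma^a U} = \ltip{S^k\Gamma^a\pi}{\nabla\cdot S^k\Gamma^a v}=0$ by the constraint \eqref{ConstrVectrFld1}, which is critical here.

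The main obstacle, and the only genuinely new difficulty caused by the viscous term, is the remainder $R_{k,a}$. Integrating by parts once in space,
\begin{equation*}
\int_0^T\!\ltip{R_{k,a}}{S^k\Gamma^a U}\,dt = -\nu\sum_{j=0}^{k-1}(-1)^{k-j}{k\choose j}\!\int_0^T\!\ltip{\nabla S^j\Gamma^a v}{\nabla S^k\Gamma^a v}\,dt.
\end{equation*}
By Cauchy--Schwarz and Young's inequality with a small parameter $\delta>0$, each summand is bounded by $\delta\,\nu\!\int_0^T\!\ltns{\nabla S^k\Gamma^a v}\,dt + C_\delta\,\EE[U][T]{p,q-1}$, since $j\le k-1\le q-1$. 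After summing over all admissible $(a,k)$ and choosing $\delta$ small, the first piece is absorbed into the dissipation part of $\EE[U][T]{p,q}$ on the left.

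This reduces the inequality to $\EE[U][T]{p,q} \lesssim \EEZ{p,q} + \EE[U][T]{p,q-1} + \sum_{(a,k)} \bigl|\int_0^T\!\ltip{\mathcal N_{k,a}}{S^k\Gamma^a U}\,dt\bigr|$, and the proof closes by induction on $q$. The base case $q=0$ is immediate since $R_{0,a}=0$. The inductive step uses the bound at level $q-1$ together with the monotonicity $\EEZ{p,q-1}\le\EEZ{p,q}$ and the fact that the admissible nonlinear index set at level $q-1$ is contained in that at level $q$, so $\EE[U][T]{p,q-1}$ is dominated by the desired right-hand side at level $(p,q)$. The sole technical burden is therefore the careful handling of the commutator remainder $R_{k,a}$, which must be simultaneously absorbed into the LHS dissipation (for the $k$-th factor) and into a lower-order energy (for the $j<k$ factors) to close.
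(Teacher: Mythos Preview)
Your proposal is correct and follows essentially the same route as the paper: pair $S^k\Gamma^a U$ with the commuted equation, use the antisymmetry of $A(\nabla)$ and the sign of the dissipative term, drop the pressure via \eqref{ConstrVectrFld1}, control the scaling-commutator remainder $R_{k,a}$ by one integration by parts plus Cauchy--Schwarz/Young (the paper writes this as $I\lesssim \EEH[U][T]{p,q-1}\EEH[U][T]{p,q}$ before Young), absorb the small piece into the dissipation, and close by induction on $q$. The only cosmetic difference is that the paper first derives the clean identity $\EE[U][T]{0,0}=\EEZ{0,0}+\int_0^T\ltip{LU}{U}\,dt$ and then applies it to $S^k\Gamma^a U$, whereas you commute first; the content is the same.
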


\begin{proof}
Taking the $L^2$ inner product of
\begin{equation}
LU = \partial_t U - A(\nabla) U - \nu B \Delta U
\end{equation}
with $U = (G,v)$, we obtain
\begin{multline}\label{HEtemp1}
\ltip{\partial_t U(t)}{U(t)} - \ltip{A(\nabla) U(t)}{U(t)} \\
- \ltip{\nu B \Delta U(t)}{U(t)}
= \ltip{LU(t)}{U(t)}.
\end{multline}
The second term on the left of \eqref{HEtemp1} vanishes
\begin{multline}
\ltip{A(\nabla) U(t)}{U(t)} = - \ltip{\nabla v(t)}{G(t)} - \ltip{\nabla \cdot G(t)}{v(t)} \\
= - \ltip{\nabla v(t)}{G(t)} + \ltip{G(t)}{\nabla v(t)} = 0.
\end{multline}
Using integration by parts, the third term on left of \eqref{HEtemp1} can be written as
\begin{multline}
\ltip{\nu B \Delta U(t)}{U(t)} \;=\; \ltip{\nu \Delta v(t)}{v(t)}
 \;=\; \int_{\rr^3} \nu \partial_k^2 v_i(t) v_i(t) dx \\
 = - \int_{\rr^3} \nu \partial_k v_i(t) \partial_k v_i(t) dx
 \;=\; - \nu \ltns{\nabla v(t)}.
\end{multline}
Therefore, \eqref{HEtemp1} becomes
\begin{equation}
 \frac12 \partial_t \ltns{U(t)} + \nu \ltns{\nabla v(t)} = \ltip{LU(t)}{U(t)}.
\end{equation}
Integration over time gives
\begin{equation}
\frac12 \ltns{U(T)} + \nu \int_0^T \ltns{\nabla v(t)} dt
    \;=\; \frac12 \ltns{U(0)} +  \int_0^T \ltip{LU(t)}{U(t)}dt,
\end{equation}
which implies that for $0 \leq T < T_0$
\begin{align}
&  \EE[U][T]{0,0} = \EEZ{0,0} + \int_0^T \ltip{LU(t)}{U(t)}dt.
\end{align}
For $p \geq q \geq 0$, we apply the above estimate to higher order derivatives $S^k\Gamma^aU$
and together
with the commutation property \eqref{LinCommut} we obtain
\begin{multline}\label{HEtemp4}
\EE[U][T]{p,q} = \EEZ{p,q} + I\\
    + \isum{p} \int_0^T \ltip{(S+1)^k \Gamma^a LU(t)}{S^k \Gamma^a U(t)}dt,
\end{multline}
where
\begin{align}
& I = - \isum{p} \sum_{j=0}^{k-1} \int_0^T
\left \langle   (-1)^{k-j} {k \choose j} \nu B \Delta S^j \Gamma^a U(t)  ,  S^k \Gamma^a U(t)   \right \rangle_{L_2}  dt.
\end{align}
For $q>0$ we show by integration by parts that
\begin{align}
\int_0^T \ltip{\nu B \Delta & S^j \Gamma^a U(t)}{S^k \Gamma^a U(t)}dt \\
& = \int_0^T \int_{\rr^3} \nu \Delta (S^j \Gamma^a v(t))^i (S^k \Gamma^a v(t))^i dx dt \\
& = \; - \int_0^T \int_{\rr^3} \nu \nabla (S^j \Gamma^a v(t))^i \cdot \nabla (S^k \Gamma^a v(t))^i dx dt \\
& \leq  \int_0^T \nu  \ltn{\nabla S^j \Gamma^a v(t)} \ltn{\nabla S^k \Gamma^a v(t)} dt \\
& \leq     \left(  \nu \int_0^T  \ltns{\nabla S^j \Gamma^a v(t)} dt \right)^{1/2}
                          \left(  \nu \int_0^T  \ltns{\nabla S^k \Gamma^a v(t)} dt \right)^{1/2}.
\end{align}
Therefore, we have
\begin{equation}
I \lesssim \EEH[U][T]{p,q-1}\EEH[U][T]{p,q}.
\end{equation}
Applying Young's inequality to the above bound and substituting into \eqref{HEtemp4} give
\begin{multline}
\EE[U][T]{p,q} \; \lesssim \; \EEZ{p,q} + \EE[U][T]{p,q-1} + \lambda \EE[U][T]{p,q}  \\
    + \isum{p} \int_0^T \ltip{(S+1)^k \Gamma^a LU(t)}{S^k \Gamma^a U(t)}dt,
\end{multline}
where $\lambda$ is a small enough constant so that the corresponding energy term can be absorbed on the left.
Induction on $q$ further gives
\begin{multline}\label{HEtemp4.5}
\EE[U][T]{p,q} \;\lesssim\; \EEZ{p,q}\\
     + \isum{p} \int_0^T \ltip{(S+1)^k \Gamma^a LU(t)}{S^k \Gamma^a U(t)}dt.
\end{multline}
We recall that by Lemma \ref{CommutLemma}, \eqref{PressureCommut}
\begin{equation}
(S+1)^k \Gamma^a LU = (S+1)^k \Gamma^a N(U,\nabla U) + (0, -\nabla S^k \Gamma^a  \pi ).
\end{equation}
Using integration by parts and the constraint \eqref{ConstrVectrFld1}, we see
that the pressure term vanishes
\begin{equation}
 -\ltip{ \nabla S^k \Gamma^a \pi}{S^k \Gamma^a v} = \ltip{S^k \Gamma^a \pi}{\nabla\cdot S^k \Gamma^a v}=0.
\end{equation}

Therefore, by \eqref{NonLinCommut} we can write the energy inequality \eqref{HEtemp4.5} as
\begin{multline} 
\EE[U][T]{p,q} \lesssim \EEZ{p,q}  \\
+
\sum_{\stackthree{\stacktwo{a_1+a_2=a}{k_1+k_2= k}}{|a|+k\le p}{k\le q}}
\left|
   \int_0^T
      \ltip  {   N(  S^{k_1}\Gamma^{a_1} U(t), \nabla S^{k_2}\Gamma^{a_2} U(t)  )   }   {S^k \Gamma^a U(t)}dt
          \right|.
\end{multline}

\end{proof}

\section{High Energy Estimates} \label{HighEnrgSect}

In this section we estimate the top order energy.  Here it is crucial to avoid loss of derivatives.

\begin{prop} \label{HighEnrgEst}
Choose $(p,q)$ so that $5\le p^\ast=\left[\frac{p+5}2\right]\le q\le p$.
Suppose that $U=(G,v)\in C([0,T_0),X^{p,q})$ is a solution of \eqref{Pde1},\eqref{Pde2},\eqref{PressureFormula} with
\begin{equation}
\label{smallness}
\sup_{0\le t< T_0}\EE{p^\ast,p^\ast}\le{\eps^2},
\end{equation}
for some $\eps$ sufficiently small.
Then there exists a constant $C_1>1$ such that
\begin{align}
&\EE{p,q}\le C_1\EEZ{p,q}\jb{t}^{C_1{\eps}}\\
&\EE{p^\ast,p^\ast}\le C_1 \EEZ{p^\ast,p^\ast}\jb{t}^{C_1{\eps}},
\end{align}
for $0\le t<T_0$.
\end{prop}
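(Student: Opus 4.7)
The starting point is the basic energy estimate from the previous section, which reduces the bound on $\EE{p,q}$ to $\EEZ{p,q}$ plus a finite sum of nonlinear integrals
\begin{equation}
J_{a,k} = \int_0^T \ltip{ N(S^{k_1}\Gamma^{a_1}U, \nabla S^{k_2}\Gamma^{a_2}U) }{ S^k\Gamma^a U }\,dt
\end{equation}
over all admissible splittings with $|a|+k\le p$, $k\le q$. The target is to dominate $\sum J_{a,k}$ by $C\eps \int_0^T \jb{t}^{-1}\EE{p,q}(t)\,dt$ plus lower-order controlled terms, so that Gr\"onwall's inequality yields the $\jb{t}^{C_1\eps}$ growth.

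\textbf{Reduction to pointwise estimates on the nonlinearity.} By Cauchy--Schwarz applied to each $J_{a,k}$, the second factor is bounded by $\EEH{p,q}$, so it suffices to estimate $\ltn{ N(\widetilde{U}_1, \nabla\widetilde{U}_2) }$. Using the inequality $1\le\zeta+\eta$ from \eqref{Unity} on one of the two factors, we split this $L^2$ norm into an interior piece ($\zeta$) and an exterior piece ($\eta$). For the interior piece, Lemma \ref{IntCalcIneq} (in its special case, since one of $k_i+|a_i|$ must be $<p$) gives a bound by $\bigl(\YYIH{p^\ast,p^\ast-1} + \jb{t}^{-1}\EEH{p^\ast,p^\ast}\bigr)\EEH{p,q}$. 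After integrating in time, the $\YYI{p^\ast,p^\ast-1}$ contribution is controlled via Theorem \ref{IntDecayNon} with $\theta=1$, which yields a logarithmic $\log(e+T)\,\EE{p^\ast,p^\ast}$ factor; combined with $\EEH{p^\ast,p^\ast}\le\eps$ from \eqref{smallness} and Cauchy--Schwarz in time, this furnishes a contribution of the desired form $C\eps\int_0^T\jb{t}^{-1}\EE{p,q}\,dt$.

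\textbf{Exterior piece and the null structure.} For the exterior piece the direct application of Lemma \ref{ExtCalcIneq} (special case) produces $\jb{t}^{-1}\EEH{p^\ast,p^\ast-1}\EEH{p,q}$, which again absorbs into the desired form by \eqref{smallness}. The delicate case is when both factors carry too many derivatives for the special case of Lemma \ref{ExtCalcIneq} to apply; here the generic bound would force the right-hand factor $\EEH{p+1,q}$ and incur a loss of a derivative. This is precisely where the Hookean null structure enters: each of the four cubic-type contractions in $N_1$ and $N_2$ can be decomposed via \eqref{GradDecomp}, and the dangerous radial-derivative term $\omega\partial_r$ pairs with one of the factors to produce either $\omega\cdot G_{,j}$ or $\omega\cdot v$, which by Proposition \ref{SpecialSob} decays like $\jb{t}^{-3/2}\EEH{2,0}\lesssim \eps\jb{t}^{-3/2}$. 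The remaining angular component $(\omega/r)\wedge\Omega$ gains a factor of $\jb{t}^{-1}$ on the support of $\eta$ and converts one derivative into a $\widetilde{\Omega}$, which is tracked by $\Gamma$ without loss. Together these give the bound \eqref{I2'Bound} alluded to in the introduction and close the derivative count.

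\textbf{Assembly and main obstacle.} Assembling interior and exterior estimates and using \eqref{smallness} yields
\begin{equation}
\EE{p,q}(T) \;\le\; C\EEZ{p,q} + C\eps\int_0^T \jb{t}^{-1}\EE{p,q}(t)\,dt,
\end{equation}
so Gr\"onwall produces $\EE{p,q}(T)\le C_1\EEZ{p,q}\jb{T}^{C_1\eps}$. The second estimate follows by rerunning the same argument with $(p,q)$ replaced by $(p^\ast,p^\ast)$; closure requires that the interpolation indices $[(p^\ast+5)/2]$ and $[p^\ast/2]$ coming from Lemmas \ref{IntCalcIneq}, \ref{ExtCalcIneq} lie in the range $(p^\ast,p^\ast)$, which holds since $p^\ast\ge 5$. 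The main obstacle is the exterior top-order analysis: one must verify that \emph{every} quadratic contraction in $N_1$ and $N_2$ that would otherwise saturate the derivative count actually exposes a $\omega\cdot G_{,j}$ or $\omega\cdot v$ factor amenable to Proposition \ref{SpecialSob}. This bookkeeping relies essentially on the constraints \eqref{ConstrVectrFld1}--\eqref{ConstrVectrFld3} and on the precise form of the Hookean stress $\nabla\cdot(GG^T)$, which is why the theorem is restricted to Hookean materials.
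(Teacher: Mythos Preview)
Your proposal has a genuine gap in the handling of the top-order terms, and it misidentifies where the real obstruction lies.

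In your interior analysis you invoke the special case of Lemma \ref{IntCalcIneq} ``since one of $k_i+|a_i|$ must be $<p$''. This is not sufficient: the special case requires specifically $k_2+|a_2|<p$, i.e.\ the factor carrying the gradient must be sub-top-order. When $a_1=0$, $k_1=0$ and $|a|+k=p$, the term is $N(U,\nabla S^k\Gamma^a U)$ with $k_2+|a_2|=p$, so the special case does not apply and the generic bound loses a derivative. You propose no remedy for this interior top-order term; the null structure via Proposition \ref{SpecialSob} is an exterior tool (it needs $r\gtrsim\jb t$) and cannot rescue the interior piece.

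The paper's fix is entirely different and much simpler: before ever splitting into interior and exterior, one observes that the terms with $a_1=0$, $k_1=0$ \emph{cancel identically}. Writing $\td U=S^k\Gamma^a U$, integration by parts together with $\nabla\cdot v=0$ gives $\ltip{v\cdot\nabla\td G}{\td G}=\ltip{v\cdot\nabla\td v}{\td v}=0$, and the remaining pair satisfies
\[
\ltip{\nabla\td v\,G}{\td G}+\ltip{\nabla\cdot(\td G G^\top)}{\td v}
=\int_{\rr^3}\partial_j(\td v_i\,\td G_{ik}G_{jk})\,dx=0.
\]
After this cancellation every surviving term has $k_2+|a_2|<p$, so the special cases of both Lemma \ref{IntCalcIneq} and Lemma \ref{ExtCalcIneq} apply directly, and no null structure is needed here at all. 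The exterior estimate then gives $\jb t^{-1}\EEH{p^\ast,p^\ast}\EEH{p,q}$ without any appeal to Proposition \ref{SpecialSob}; the latter is reserved for the low-energy section, where the goal is sharper temporal decay rather than avoiding derivative loss. Your ``main obstacle'' is thus misplaced: the essential step in this proposition is the algebraic cancellation at top order, not the null structure.
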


\begin{proof}

Using \eqref{PdeMatrx}, \eqref{DefN}, and \eqref{DefN1N2}, we can write the inner products in \eqref{BEest} as
\begin{align}
&\ltip{N(  S^{k_1}\Gamma^{a_1} U(t), \nabla S^{k_2}\Gamma^{a_2} U(t)  )   }   {S^k \Gamma^a U(t)}\\
& \qquad=\ltip{ \nabla S^{k_2}\Gamma^{a_2}v S^{k_1}\Gamma^{a_1}G}{S^k \Gamma^a G(t)}\\
&\qquad\quad -\ltip{S^{k_1}\Gamma^{a_1}v\cdot\nabla S^{k_2}\Gamma^{a_2}G  }   {S^k \Gamma^a G(t)}\\
&\qquad\quad +\ltip{\nabla\cdot (S^{k_1}\Gamma^{a_1}G S^{k_2}\Gamma^{a_2} G^\top)}{S^k \Gamma^a v(t)}\\
&\qquad\quad -\ltip {S^{k_1} \Gamma^{a_1}v\cdot\nabla S^{k_2}\Gamma^{a_2}v}{S^k \Gamma^a v(t)}.
\end{align}

We will first address the special case when $a_1=0$ and $k_1=0$, i.e.\ $a_2=a$ and $k_2=k$.
Using the notation  $\td{v}=S^k \Gamma^a v$
and $\td{G}=S^k \Gamma^a G$ introduced in \eqref{TildeNotation}, we start with
\begin{equation}
-\ltip{v \cdot \nabla \td{G}}{\td{G}}
     \;=\; - \int_{\rr^3} v _k \partial_k \td{G}_{ij} \td{G}_{ij} dx
  \;=\; - \frac12 \int_{\rr^3} v _k \partial_k |\td{G}|^2 dx \;=\; 0,
\end{equation}
by \eqref{ConstrVectrFld1}.
Similarly, we see that $\ltip{-v \cdot \nabla \td{v}}{\td{v}} = 0$.

The remaining two terms are
\begin{multline}
\ltip{\nabla\td{v}G}{\td{G}}+\ltip{\nabla\cdot(\td{G}G^\top)}{\td{v}}\\
=\int_{\rr^3}[\partial_j\td{v}_iG_{jk}\td{G}_{ik}+\partial_j(\td{G}_{ik}G_{jk})\td{v}_i]dx
\int_{\rr^3}\partial_j(\td{v}_i\td{G}_{ik}G_{jk})dx=0.
\end{multline}

This shows that all of the the terms in the sum with $a_1=0$ and $k_1=0$ cancel.
Therefore we can write \eqref{BEest} as
\begin{multline} \label{LossDrvtv}
\EE[U][T]{p,q} \lesssim \EEZ{p,q}  \\
 + \sum_{\stackthree{|a_1+a_2|+k_1+k_2\le p}{k_1+k_2\le q}{|a_2|+k_2<p}}
\int_0^T
\ltn{\; |S^{k_1} \Gamma^{a_1} U(t)| \; |S^{k_2} \Gamma^{a_2+1} U(t)| \;}
\EEH{p,q} dt,
\end{multline}
and thus, there is no derivative loss.

 By \eqref{Unity} and Lemmas
\ref{IntCalcIneq} and \ref{ExtCalcIneq}, we get the  bound
\begin{align}
&\hspace*{-15mm}\sum_{\stackthree{|a_1+a_2|+k_1+k_2\le p}{k_1+k_2\le q}{|a_2|+k_2<p}}
\ltn{\;|S^{k_1}\Gamma^{a_1}u(t)|\; | S^{k_2}\Gamma^{a_2+1}u(t)|\;}\\
\lesssim
&\sum_{\stackthree{|a_1+a_2|+k_1+k_2\le p}{k_1+k_2\le q}{|a_2|+k_2<p}}
\ltn{\zeta \;|S^{k_1}\Gamma^{a_1}u(t)|\; | S^{k_2}\Gamma^{a_2+1}u(t)|\;}\\
&+\sum_{\stackthree{|a_1+a_2|+k_1+k_2\le p}{k_1+k_2\le q}{|a_2|+k_2<p}}
\ltn{\eta \;|S^{k_1}\Gamma^{a_1}u(t)|\; | S^{k_2}\Gamma^{a_2+1}u(t)|\;}\\
\lesssim
&\left[\YYIH{\left[\frac{p+5}2\right],\left[\frac{p+3}2\right]}+\jb{t}^{-1}\EEH{\left[\frac{p+5}2\right],\left[\frac{p+5}2\right]}\right]\EEH{p,q}.
\end{align}
Inserting this into \eqref{LossDrvtv} yields
\begin{multline}
\label{energyid5}
\EE[u][T]{p,q}\lesssim \EEZ{p,q}\\+
\int_0^T\left[\YYIH{\left[\frac{p+5}2\right],\left[\frac{p+3}2\right]}+\jb{t}^{-1}\EEH{\left[\frac{p+5}2\right],\left[\frac{p+5}2\right]}\right]
\EE{p,q}dt.
\end{multline}
An application of Gronwall's inequality produces
\begin{multline}
\label{energyid6}
\EE[u][T]{p,q}\\
\lesssim \EEZ{p,q}
\exp
\int_0^T\left[\YYIH{\left[\frac{p+5}2\right],\left[\frac{p+3}2\right]}\!+\!\jb{t}^{-1}\EEH{\left[\frac{p+5}2\right],\left[\frac{p+5}2\right]}\right]dt.
\end{multline}
We point out that \eqref{energyid6} holds for any pair $(p,q)$ as long as $p\ge q\ge \left[\frac{p+5}2\right]\ge5$,
which requires only $p\ge 5$.

Recalling the definition $p^\ast=\left[\frac{p+5}2\right]$, we obtain using Theorem \ref{IntDecayNon}
\begin{align}
\int_0^T 
\YYIH{\left[\frac{p+5}2\right],\left[\frac{p+3}2\right]}dt 
&\le
\left(\int_0^T\jb{t}\YYI {p^\ast,p^\ast-1}dt\right)^{1/2}\left(\int_0^T\jb{t}^{-1}dt\right)^{1/2}\\
&\lesssim\left( \sup_{0\le t\le T}\EE{p^\ast,p^\ast}\log(e+T)\right)^{1/2}\left(\log(e+T)\right)^{1/2}\\
&\lesssim \sup_{0\le t\le T}\EEH{p^\ast,p^\ast}\log(e+T).
\end{align}
Likewise, we have the bound
\begin{equation}
\int_0^T\jb{t}^{-1}\EEH{\left[\frac{p+5}2\right],\left[\frac{p+5}2\right]}
\lesssim \sup_{0\le t\le T}\EEH{p^\ast,p^\ast}\log(e+T).
\end{equation}
Now thanks to the assumption \eqref{smallness}, the inequality \eqref{energyid6} implies that
\begin{equation}
\EE[u][T]{p,q}
\lesssim \EEZ{p,q}
\exp \left[C{\eps}\log(e+T)\right]\le  \EEZ{p,q}\jb{T}^{C_1{\eps}}.
\end{equation}

Returning to \eqref{energyid6}, we can repeat this argument with the pair $(p,q)=(p^\ast,p^\ast)$
because $p^\ast\ge5$ implies that $ p^\ast\ge\left[\frac{p^\ast+5}2\right]$.  Therefore, we also obtain the
bound
\begin{equation}
\EE[u][T]{p^\ast,p^\ast}
\lesssim
  \EEZ{p^\ast,p^\ast}\jb{T}^{C_1{\eps}},
\end{equation}
after a possible increase in the size of the constant $C_1$.
Note however, that the choice of $C_1$ is independent of ${\eps}$.
Theorem \ref{HighEnrgEst} now follows.

\end{proof}

\section{Low Energy Estimates}

Now we will estimate the lower energy where the focus will be to obtain the best possible
temporal decay. 

\begin{prop}
Choose  $(p,q)$ such that $p\ge11$,  and  $p\ge q> p^\ast$, where $p^\ast=\left[\frac{p+5}2\right]$.
Suppose that $U=(G,v)\in C([0,T_0),X^{p,q})$ is a solution of \eqref{Pde1},\eqref{Pde2},\eqref{PressureFormula} with
\begin{equation}
\label{SmallnessEps}
\sup_{0\le t< T_0}\EE{p^\ast,p^\ast}\le{\eps^2}\ll1.
\end{equation}
There exists a constant $C_0>1$ such that
\begin{equation}
\label{Conclusion}
\sup_{0\le t<T_0}\EE{p^\ast,p^\ast}\\
\le C_0\EEZ{p^\ast,p^\ast}
\left(1+\EEZH{p,q}\right).
\end{equation}
\end{prop}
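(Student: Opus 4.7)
The plan is to apply the basic energy identity \eqref{BEest} at the low level $(p^\ast,p^\ast)$ and control the nonlinear contributions uniformly in $T$, in contrast to Proposition \ref{HighEnrgEst} where a $\log(e+T)$ factor produces the $\jb{t}^{C_1\eps}$ growth. The high-level data $\EEZH{p,q}$ enters the bound through the local energy decay estimate at a higher derivative level.

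Following the proof of Proposition \ref{HighEnrgEst}, the Hookean null cancellation---depending only on $\nabla\cdot v=0$ and integration by parts in the top-level quadratic form---eliminates the terms in the nonlinear sum of \eqref{BEest} with $a_1=k_1=0$. Each remaining term satisfies $|a_2|+k_2\le p^\ast-1<p^\ast$, so the special-case forms of Lemmas \ref{IntCalcIneq} and \ref{ExtCalcIneq} with $\pbar=p^\ast$, combined via $1\le\zeta+\eta$, give
\begin{equation*}
\ltn{|S^{k_1}\Gamma^{a_1}U||S^{k_2}\Gamma^{a_2+1}U|}\lesssim\bigl(\YYIH{p_1,q_1}+\jb{t}^{-1}\EEH{p_1,q_1}\bigr)\EEH{p^\ast,p^\ast},
\end{equation*}
with $p_1=\left[\frac{p^\ast+5}{2}\right]$ and $q_1=\left[\frac{p^\ast}{2}\right]$. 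Since $p\ge11$ gives $p^\ast\ge8$, $(p_1,q_1)$ is dominated coordinate-wise by both $(p^\ast,p^\ast-1)$ and $(p^\ast+1,p^\ast)$. Substitution into \eqref{BEest} yields a Gronwall-type inequality for $\EE{p^\ast,p^\ast}(T)$.

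To control the integrated weight, I would combine the two local-energy decay estimates of Theorem \ref{IntDecayNon}. Equation \eqref{IntDecayEst1} with $\theta=0$ gives $\int_0^T\YYI{p^\ast,p^\ast-1}\,dt\lesssim\sup\EE{p^\ast,p^\ast}\le\eps^2$, while \eqref{intdecayest2} with $\theta=0$ and $\gamma>C_1\eps$, combined with the high-energy bound $\EE{p,q}\le C\EEZ{p,q}\jb{t}^{C_1\eps}$ from Proposition \ref{HighEnrgEst}, gives $\int_0^T\YYI{p^\ast+1,p^\ast}\,dt\lesssim\EEZ{p,q}$. By monotonicity these dominate $\int_0^T\YYI{p_1,q_1}\,dt$. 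A weighted Cauchy--Schwarz argument then pairs the $\eps$-smallness from the first estimate against $\EEZH{p,q}$ from the second, yielding
\begin{equation*}
\int_0^T\bigl(\YYIH{p_1,q_1}+\jb{t}^{-1}\EEH{p_1,q_1}\bigr)\EE{p^\ast,p^\ast}\,dt\le C\eps\sup_{0\le t\le T}\EE{p^\ast,p^\ast}+C\EEZ{p^\ast,p^\ast}\EEZH{p,q};
\end{equation*}
for $\eps$ small the first term on the right is absorbed on the left, giving \eqref{Conclusion}.

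The main obstacle is that a naive Cauchy--Schwarz applied to $\int_0^T\YYIH{p_1,q_1}\,dt$ alone produces a factor $\sqrt{T}$ or $\log(e+T)$ that would destroy the uniform bound. The resolution is to use the two local energy decay estimates in tandem: the low-level estimate at $(p^\ast,p^\ast-1)$ supplies the smallness needed for absorption, while the high-level estimate at $(p^\ast+1,p^\ast)$ supplies the factor $\EEZH{p,q}$ through the high-energy growth bound of Proposition \ref{HighEnrgEst}. The hypothesis $p\ge11$ is used precisely so that $(p_1,q_1)$ is dominated by both $(p^\ast,p^\ast-1)$ and $(p^\ast+1,p^\ast)$, allowing monotonicity to transfer both decay bounds to $\YYI{p_1,q_1}$.
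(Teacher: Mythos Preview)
Your approach has a genuine gap in the exterior region. After applying Lemma~\ref{ExtCalcIneq} at level $\pbar=p^\ast$, the exterior contribution to the nonlinear integrand is
\[
\jb{t}^{-1}\,\EEH{p_1,q_1}\,\EE{p^\ast,p^\ast},
\]
and this term cannot be bounded uniformly in $T$ by the mechanism you describe. Using either $\EEH{p_1,q_1}\le\eps$ or the growth bounds from Proposition~\ref{HighEnrgEst}, the time integral is at best
\[
\eps\sup_{t}\EE{p^\ast,p^\ast}\int_0^T\jb{t}^{-1}\,dt
\quad\text{or}\quad
C\,\EEZ{p^\ast,p^\ast}\EEZH{p^\ast,p^\ast}\int_0^T\jb{t}^{-1+O(\eps)}\,dt,
\]
both of which diverge as $T\to\infty$. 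No weighted Cauchy--Schwarz pairing with the local energy quantities $\YYI{\cdot,\cdot}$ can repair this, because the offending exterior term contains no factor of $\mathcal Y$ at all; it comes purely from the $\jb{t}^{-1}$ Sobolev decay in Lemma~\ref{ExtCalcIneq}. Your ``absorption'' step therefore fails: the right-hand side you write down is not actually achieved.

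The paper's proof takes a different route in the exterior. It does \emph{not} rely on the $a_1=k_1=0$ cancellation (it explicitly says this is not needed here), and instead exploits the null structure of the Hookean nonlinearity via the gradient decomposition \eqref{GradDecomp} together with Proposition~\ref{SpecialSob}. Every nonlinear term in \eqref{ConvectiveTerms} is of the form $u\cdot\nabla(\cdot)$ with $u$ either $v$ or a column $G_{,j}$; the radial part $\omega\cdot u\,\partial_r(\cdot)$ is controlled by $\ltn[L^\infty]{\eta\,\omega\cdot S^{k_1}\Gamma^{a_1}u}\lesssim\jb{t}^{-3/2}\EEH{\cdot,\cdot}$ from Proposition~\ref{SpecialSob} (which uses the constraints $\nabla\cdot v=\nabla\cdot G^\top=0$), while the angular part carries an extra factor $r^{-1}\lesssim\jb{t}^{-1}$ on $\supp\eta$ on top of the usual $\jb{t}^{-1}$ from Lemma~\ref{ExtCalcIneq}. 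Both pieces therefore decay like $\jb{t}^{-3/2}$ or $\jb{t}^{-2}$, which is integrable after inserting the $\jb{t}^{C_1\eps}$ growth from Proposition~\ref{HighEnrgEst}. This is the ``precise null structure'' highlighted in the introduction at \eqref{I2'Bound}, and it is the missing idea in your proposal.
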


\begin{proof}
We start with \eqref{BEest} applied to $(p,q) = (p^\ast,p^\ast)$

\begin{multline} \label{GlbEnrgInq}
\EE[U][T]{p^\ast,p^\ast} \lesssim \EEZ{p^\ast,p^\ast}  \\
+
\sum_{\stackthree{\stacktwo{a_1+a_2=a}{k_1+k_2= k}}{|a|+k\le p^\ast}{k\le p^\ast}}
\left|
   \int_0^T
      \ltip  {   N(  S^{k_1}\Gamma^{a_1} U(t), \nabla S^{k_2}\Gamma^{a_2} U(t)  )   }   {S^k \Gamma^a U(t)}dt
          \right|.
\end{multline}
We have shown in Section \eqref{HighEnrgSect} that the summation indices satisfy $|a_2| + k_2 < p^\ast$, but we will not need
this fact here.

Using the cut-off functions \eqref{CutOffs} and the property \eqref{Unity}, we can bound
the integral on the right-hand side by
\begin{align} 
&\sum_{\stackthree{\stacktwo{a_1+a_2=a}{k_1+k_2= k}}{|a|+k\le p^\ast}{k\le p^\ast}}
\left|
 \int_0^T
    \ltip  {   N(  S^{k_1} \Gamma^{a_1} U(t), \nabla S^{k_2}\Gamma^{a_2} U(t)  )   }   {S^k \Gamma^a U(t)}dt
\right| \\
& \hs{1} \lesssim
\sum_{\stackthree{\stacktwo{a_1+a_2=a}{k_1+k_2= k}}{|a|+k\le p^\ast}{k\le p^\ast}}
\int_0^T \int_{\rr^3}
    \zeta^2 |S^{k_1}\Gamma^{a_1} U(t)|\;|\nabla S^{k_2}\Gamma^{a_2} U(t)|\;|S^k \Gamma^a U(t)|dxdt \\
    \label{GlbI1I2Inq}
&  \hs{2} +
\sum_{\stackthree{\stacktwo{a_1+a_2=a}{k_1+k_2= k}}{|a|+k\le p^\ast}{k\le p^\ast}}
\left|
 \int_0^T
    \ltip  {  \eta N(  S^{k_1}\Gamma^{a_1} U(t), \nabla S^{k_2}\Gamma^{a_2} U(t)  )   }   {S^k \Gamma^a U(t)}dt
\right|\\
&  \hs{1} \equiv I_1 + I_2,
\end{align}
where $I_1$ and $I_2$ denote correspondingly the two integrals on the right and $T$ is in the range
$0 \leq T < T_0$.

We shall show that
\begin{equation} \label{GlbI1}
I_j \lesssim \EEZ{p^\ast,p^\ast} \EEZH{p,q}\quad j=1,2.
\end{equation}

The interior intergral $I_1$ can be bounded by
\begin{multline}
I_1 =
\sum_{\stackthree{\stacktwo{|a_1+a_2|\le |a|}{k_1+k_2= k}}{|a|+k\le p^\ast}{k\le p^\ast}}
\int_0^T
\int_{\rr^3}  \zeta ^ 2
|   S^{k_1} \Gamma^{a_1} U|\;
|\nabla   S^{k_2} \Gamma^{a_2}U|\;
| S^k \Gamma^a U | \; dxdt \\
\lesssim
\sum_{\stacktwo{k_1+k_2+|a_1|+|a_2|\le p^\ast}{k_1+k_2\le p^\ast}}
\int_0^T
\ltn{
\; \zeta ^ 2
|S^{k_1} \Gamma^{a_1} U|\;
| \nabla S^{k_2} \Gamma^{a_2}U|\;
}\;
\EEH{p^\ast,p^\ast}  dt.
\end{multline}

In the case $k_1+|a_1| <  k_2+|a_2|+1$, i.e.\ $k_1+|a_1| \leq
 \left[   \frac{p^\ast }{2}  \right] $, using  \eqref{SobInt1}, we have
\begin{align}
\ltn{
\zeta ^ 2
|S^{k_1} &\Gamma^{a_1} U|
| \nabla S^{k_2} \Gamma^{a_2}U|
\;}
\\
&\lesssim
\ltn[L^\infty]{\zeta S^{k_1} \Gamma^{a_1} U}
\ltn{ \zeta  \nabla S^{k_2} \Gamma^{a_2}U}
\\
&\lesssim
\left [  \YYIH{  \left[  \frac{p^\ast +4 }{2}  \right] ,  \left[ \frac{p^\ast }{2}\right]   }
+
\jb{t}^{-1}    \EEH{  \left[  \frac{p^\ast +2 }{2}  \right] ,  \left[ \frac{p^\ast }{2}\right]   }      \right]
    \YYIH{p^\ast+1,p^\ast}.
\end{align}

We next consider the case when $k_2+|a_2|+1 \leq k_1+|a_1|$, i.e.
$k_2+|a_2| \leq \left[   \frac{p^\ast - 1}{2}  \right]$.
With the use of Hardy's inequality \eqref{SobHrdy} and the Sobolev inequality \eqref{SobInt2}, we have
\begin{align}
\ltn{ \zeta ^ 2
  \; |S^{k_1} \Gamma^{a_1} U|\;
&| \nabla S^{k_2} \Gamma^{a_2}U|\;}\\
&\lesssim  \ltn{r^{-1}\zeta S^{k_1} \Gamma^{a_1} U}
\ltn[L^\infty]
{r\zeta \nabla S^{k_2} \Gamma^{a_2}U}\\
&\lesssim
\left[\YYIH{p^{\ast}+1,p^\ast}+\jb{t}^{-1}\EEH{p^\ast,p^\ast}\right]
\\
&\qquad \times\left[\YYIH{\left[\frac{p^\ast+5}2\right],\left[\frac{p^\ast-1}2\right]}
+\jb{t}^{-1}\EEH{\left[\frac{p^\ast+3}2\right],\left[\frac{p^\ast-1}2\right]}\right].
\end{align}

Recall that $\left[\frac{p^\ast+5}2\right]\le p^\ast$ since $p\ge11$. Overall, for the
interior low energy we have
\begin{align}
I_1 \ \lesssim \ &
\int_0^T
\left[\YYIH{\left[\frac{p^\ast+5}2\right],\left[\frac{p^\ast}2\right]}
+\jb{t}^{-1}\EEH{\left[\frac{p^\ast+3}2\right],\left[\frac{p^\ast}2\right]}\right]\\
&\qquad \times
\left[\YYIH{p^{\ast}+1,p^\ast}+\jb{t}^{-1}\EEH{p^\ast,p^\ast}\right] \EEH{p^\ast,p^\ast} dt\\
 \ \lesssim \ &
\int_0^T
\left[\YYIH{p^\ast,p^\ast-1}
+\jb{t}^{-1}\EEH{p^\ast,p^\ast}\right]\\
&\qquad \times
\left[\YYIH{p^{\ast}+1,p^\ast}+\jb{t}^{-1}\EEH{p^\ast,p^\ast}\right] \EEH{p^\ast,p^\ast} dt\\
\ \lesssim \ &
\int_0^T   \YYIH{ p^\ast, p^\ast - 1 }  \YYIH{  p^\ast + 1 ,  p^\ast   } \EEH{  p^\ast, p^\ast }dt\\
&\qquad +\    \int_0^T  \jb{t}^{-1}   \YYIH{   p^\ast + 1 ,  p^\ast    }  \EE{p^\ast,p^\ast}  dt
\\
&\qquad +\    \int_0^T  \jb{t}^{-2}     {\mathcal E}^{3/2}_{p^\ast,p^\ast}[U](t)  dt.
\end{align}

Next, we are going to estimate these three integrals above. We will use
Theorem \ref{IntDecayNon} and Proposition \ref{HighEnrgEst}. Furthermore,
we will require that $2C_1 \eps < 1$.

The first integral can be estimated as follows:
\allowdisplaybreaks{
\begin{align}
&\int_0^T  \YYIH{ p^\ast, p^\ast - 1 }  \YYIH{  p^\ast + 1 ,  p^\ast   } \EEH{  p^\ast, p^\ast }dt
\\
&\lesssim \ \left(\sup_{0\le t\le T}\jb{t}^{-C_1\eps} \EE{p^\ast, p^\ast}    \right)^{1/2} \\
&\hspace{2cm} \times
 \int_0^T     \jb{t}^{C_1 \eps/2}    \YYIH{ p^\ast, p^\ast - 1 }  \YYIH{  p^\ast + 1 ,  p^\ast   } dt
\\
&\lesssim \  \EEZH{p^\ast, p^\ast}
\left(  \int_0^T
 \jb{t}^{C_1 \eps/2}    \YYI{ p^\ast, p^\ast - 1 } dt \right)^{1/2}
   \left(  \int_0^T     \jb{t}^{C_1 \eps/2}     \YYI{  p^\ast + 1 ,  p^\ast   } dt \right)^{1/2}
\\
&\lesssim \  \EEZH{p^\ast, p^\ast}
\left(\sup_{0\le t\le T} \jb{t}^{-C_1 \eps} \EE{p^\ast, p^\ast}\right)^{1/2}
   \left(\sup_{0\le t\le T}\jb{t}^{-C_1\eps} \EE{p,q}\right)^{1/2}\\
& \lesssim     \EEZ{p^\ast, p^\ast} \EEZH{p, q}.
\end{align}

For the second integral, we have
\begin{align}
\int_0^T  \jb{t}^{-1}  & \YYIH{   p^\ast + 1 ,  p^\ast    }  \EE{p^\ast,p^\ast}  dt\\
&\lesssim
\sup_{0\le t\le T} \jb{t}^{-C_1 \eps} \EE{p^\ast, p^\ast}
\int_0^T  \jb{t}^{-1+C_1{\eps}}   \YYIH{   p^\ast + 1 ,  p^\ast    }    dt
\\
&\lesssim
\EEZ{p^\ast, p^\ast}
\int_0^T  \jb{t}^{-1+C_1{\eps}/2}   \jb{t}^{C_1\eps/2}\YYIH{   p^\ast + 1 ,  p^\ast    }    dt
\\
&\lesssim
\EEZ{p^\ast, p^\ast}
\left(\int_0^T\jb{t}^{-2+C_1{\eps}}dt\right)^{1/2}
\left(
\int_0^T \jb{t}^{C_1{\eps} }
\YYI {   p^\ast + 1 ,  p^\ast    }    dt
\right)^{1/2}
\\
&\lesssim
\EEZ{p^\ast, p^\ast}
\left(\sup_{0\le t\le T}\jb{t}^{-C_1 \eps} \EE{p,q}\right)^{1/2}\\
&\lesssim \EEZ{p^\ast,p^\ast}\EEZH{p,q}.
\end{align}
} 

And finally, the third integral is bounded by
\begin{align}
\int_0^T  \jb{t}^{-2}     {\mathcal E}^{3/2}_{p^\ast,p^\ast}[U](t)  dt
\; &\lesssim\; \left(\sup_{0\le t\le T}\jb{t}^{-C_1{\eps}}\EE{p^\ast,p^\ast}\right)^{3/2}
\int_0^T\jb{t}^{-2+\frac32C_1{\eps}}dt\\
&\lesssim {\mathcal E}^{3/2}_{p^\ast,p^\ast}[U_0] \\
&\lesssim \EEZ{p^\ast,p^\ast}\EEZH{p,q}.
\end{align}

Combining these estimates, we have proven \eqref{GlbI1}, in the case $j=1$.

Now we turn to the estimation of $I_2$.  This is the crucial step, where we use
the specific structure of the nonlinearity.
By the definition of the exterior term
\begin{equation}
I_2 =
  \sum_{\stackthree{\stacktwo{a_1+a_2=a}{k_1+k_2= k}}{|a|+k\le p^\ast}{k\le p^\ast}}
 \left|
 \int_0^T
    \ltip  {  \eta N(  S^{k_1}\Gamma^{a_1} U(t), \nabla S^{k_2}\Gamma^{a_2} U(t)  )   }   {S^k \Gamma^a U(t)}dt
  \right|.
\end{equation}

Referring to the definition of $N(U, \nabla U)$ (see \eqref{DefN},\eqref{DefN1N2}) and applying the constraint
\eqref{ConstrVectrFld1} we note that, in components, the quadratic nonlinear terms are of the form
\begin{equation} \label{ConvectiveTerms}
\begin{aligned}
  N_1( S^{k_1} \Gamma^{a_1}U,\nabla  S^{k_2} \Gamma^{a_2}U)_{ij}
         = &(S^{k_1} \Gamma^{a_1}G)_{, j} \cdot \nabla (S^{k_2} \Gamma^{a_2}v)_i \\
& - S^{k_1} \Gamma^{a_1}v \cdot \nabla (S^{k_2} \Gamma^{a_2} G)_{ij} \\
N_2( S^{k_1} \Gamma^{a_1}U,\nabla  S^{k_2} \Gamma^{a_2}U)_{i}
         =& (S^{k_1} \Gamma^{a_1}G)_{, j} \cdot \nabla (S^{k_2} \Gamma^{a_2}G)_{ij} \\
&-S^{k_1} \Gamma^{a_1}v \cdot \nabla ( S^{k_2} \Gamma^{a_2} v )_i,
\end{aligned}
\end{equation}
where $G_{, j}$ denotes the  $j^{\text{th}}$ column of the
matrix $G$.   
Here and below, the indices satisfy $a_1 + a_2 = a$, $k_1 + k_2 = k$ with $|a| + k \leq p^\ast$ and $k \leq p^\ast$.
Using the gradient decomposition \eqref{GradDecomp}, we can write
\begin{align}
&| N(  S^{k_1}\Gamma^{a_1} U(t), \nabla S^{k_2}\Gamma^{a_2} U(t)  )|\\
&\hspace{.5in}
\lesssim (|\omega\cdot S^{k_1}\Gamma^{a_1}v|+|S^{k_1} \Gamma^{a_1}G^\top\omega|)\;|\partial_rS^{k_2} \Gamma^{a_2}U|\\
&\hspace{1in}+r^{-1} |S^{k_1}\Gamma^{a_1} U| \; | \Omega S^{k_2}\Gamma^{a_2} U| \\
&\hspace{.5in}
= \left(|\omega\cdot S^{k_1}\Gamma^{a_1}v|+\sum_j|\omega\cdot S^{k_1} \Gamma^{a_1} G_{,j}|\right)
\;|\partial_rS^{k_2} \Gamma^{a_2}U|\\
&\hspace{1in}+r^{-1} |S^{k_1}\Gamma^{a_1} U| \; | \Omega S^{k_2}\Gamma^{a_2} U| \\
&\hspace{.5in}\lesssim |\omega\cdot S^{k_1}\Gamma^{a_1}u| \; |\partial_rS^{k_2} \Gamma^{a_2}U|
+r^{-1} |S^{k_1}\Gamma^{a_1} U| \; | \Omega S^{k_2}\Gamma^{a_2} U|,
\end{align}
where $S^{k_1}\Gamma^{a_1} u$ stands for either of
\begin{equation}
S^{k_1}\Gamma^{a_1} u =
\begin{cases}
(S^{k_1} \Gamma^{a_1} G)_{, j}\\
S^{k_1} \Gamma^{a_1} v
\end{cases}.
\end{equation}

Therefore, we obtain the bound
\begin{align}
 I_2 \lesssim
& \sum_{\stackthree{\stacktwo{a_1+a_2=a}{k_1+k_2= k}}{|a|+k\le p^\ast}{k\le p^\ast}}
\left\{ \int_0^T\int_{\rr^3}
    \eta\;|\omega\cdot S^{k_1}\Gamma^{a_1}u|\;|\partial_rS^{k_2}\Gamma^{a_2}U|   \; |S^k \Gamma^a U|\;dxdt \right.\\
  \\
  \label{I2Bound}
& \hspace{1 cm}
+\left. \int_0^T \int_{\rr^3}
     \eta \;r^{-1} |S^{k_1}\Gamma^{a_1} U(t)| \; | \Omega S^{k_2}\Gamma^{a_2} U(t)| \; |S^k \Gamma^a U(t)|\; dx dt \right\} \\
 \equiv & \;I_2' + I_2''.
\end{align}

By Lemma \ref{ExtCalcIneq}, we can bound $I_2''$ as follows:
\begin{align}
& I_2'' \lesssim
\sum_{\stackthree{\stacktwo{a_1+a_2=a}{k_1+k_2= k}}{|a|+k\le p^\ast}{k\le p^\ast}}
   \int_0^T \jb{t}^{-1} \ltn{ \eta |S^{k_1}\Gamma^{a_1} U(t)| \; | \Omega S^{k_2}\Gamma^{a_2} U(t)| \;}
              \ltn{S^k \Gamma^a U(t)} dt \\
              \label{I2''Bound}
& \hspace{2cm} \lesssim
   \int_0^T \jb{t}^{-2} \EE{p^\ast,p^\ast} \EEH{p,q} dt.
\end{align}

We see that  $I_2'$ is bounded by a sum of terms of the form
\begin{equation}\label{SpecTerm}
\int_0^T  \ltn[L^\infty]{\eta \; \omega \cdot S^{k_1}\Gamma^{a_1} u(t)}
       \;  \ltn {S^{k_2}\Gamma^{a_2+1} U(t) \;}
              \ltn{S^k \Gamma^a U(t)} dt,
\end{equation}
Recalling the constraints \eqref{ConstrVectrFld1},\eqref{ConstrVectrFld2},
we see that Proposition  \ref{SpecialSob} can be used to estimate
 $\ltn[L^\infty]{\eta \; \omega \cdot S^{k_1}\Gamma^{a_1} u(t)}$.

In the case $k_1 + |a_1| \leq \left[ \frac{p^\ast}{2} \right]$ and
$k_2 + |a_2| \leq p^\ast$, Proposition  \ref{SpecialSob} gives
\begin{align}
 &  \ltn[L^\infty]{\eta \; \omega \cdot S^{k_1} \Gamma^{a_1} u(t)} \ltn{S^{k_2} \Gamma^{a_2+1} U(t)} \\
& \hspace{3cm} \lesssim
   \jb{t}^{-3/2} \EEH{\left[ \frac{p^\ast+4}{2} \right], \left[ \frac{p^\ast}{2} \right] }
              \EEH{p^\ast+1,p^\ast } \\
& \hspace{3cm} \lesssim
   \jb{t}^{-3/2} \EEH{p^\ast,p^\ast } \EEH{p,q}.
\end{align}

And in the case $k_2 + |a_2| \leq \left[ \frac{p^\ast-1}{2} \right]$ and
$k_1 + |a_1| \leq p^\ast$, again using Proposition  \ref{SpecialSob}, we have
\begin{align}
& 
   \ltn[L^\infty]{\eta \; \omega \cdot S^{k_1} \Gamma^{a_1} u(t)} \ltn{S^{k_2} \Gamma^{a_2+1} U(t)} \\
& \hspace{3cm} \lesssim
   \jb{t}^{-3/2} \EEH{p^\ast+2,p^\ast }
            \EEH{\left[ \frac{p^\ast+1}{2} \right], \left[ \frac{p^\ast-1}{2} \right] } \\
& \hspace{3cm} \lesssim
   \jb{t}^{-3/2} \EEH{p,q} \EEH{p^\ast,p^\ast }.
\end{align}

From the two cases above and \eqref{SpecTerm} we conclude that
\begin{equation}\label{I2'Bound}
I_2' \lesssim \int_0^T \jb{t}^{-3/2} \EE{p^\ast,p^\ast } \EEH{p,q} dt.
\end{equation}

Therefore, from \eqref{I2Bound},\eqref{I2''Bound}, and \eqref{I2'Bound}, we have shown that
\begin{equation}
I_2 \lesssim \int_0^T \jb{t}^{-3/2} \EE{p^\ast,p^\ast } \EEH{p,q} dt.
\end{equation}
Now by Proposition \ref{HighEnrgEst}, we obtain
\begin{equation}
\label{GlbI2}
I_2\lesssim\EEZ{p^\ast,p^\ast } \EEZH{p,q}\int_0^T\jb{t}^{-3/2(1-C_1\eps)}dt\lesssim\EEZ{p^\ast,p^\ast } \EEZH{p,q},
\end{equation}
provided $C_1\eps<1/3$.  This verifies \eqref{GlbI1}, in the case $j=2$.

 By 
\eqref{GlbEnrgInq}, \eqref{GlbI1I2Inq}, and the
estimates \eqref{GlbI1},  the proof of Proposition \ref{HighEnrgEst} is complete.

\end{proof}

\bibliography{JKS}
\bibliographystyle{amsplain}

\end{document}